\newtheorem{define}{Definition}[section]
\newtheorem{pro}{Proposition}[section]
\newtheorem{Lem}{Lemma}[section]
\newtheorem{cor}{Corollary}[section]
\newtheorem{remark}{Remark}[section]
\theoremstyle{plain} \newtheorem{thm}{Theorem}[section]
\DeclareMathOperator{\dist}{dist}
\def\R{\mathbb R}
\begin{document}
{\vspace{0.001in}}

\title
{Regularity results for a class of nonlinear fractional Laplacian and singular problems}
\author{R. Arora, J. Giacomoni and G. Warnault \footnote{LMAP, UMR E2S-UPPA CNRS 5142 B\^atiment IPRA, Avenue de l'Universit\'e F-64013 Pau, France. email: rakesh.arora@univ-pau.fr, jacques.giacomoni@univ-pau.fr guillaume.warnault@univ-pau.fr}}
%
%
%
\maketitle
\begin{abstract}
In this article, we investigate the existence, uniqueness, nonexistence, and regularity of weak solutions to the nonlinear fractional elliptic problem of type $(P)$ (see below) involving singular nonlinearity and singular weights in smooth bounded domain. 
We prove the existence of weak solution in $W_{loc}^{s,p}(\Omega)$ via approximation method. Establishing a new comparison principle of independent interest, we prove the uniqueness of weak solution for $0 \leq \delta< 1+s- \frac{1}{p}$ and furthermore the nonexistence of weak solution for $\delta \geq sp.$ Moreover, by virtue of barrier arguments  we study the behavior of minimal weak solution in terms of distance function. Consequently, we prove H\"older regularity up to the boundary and optimal Sobolev regularity for minimal weak solutions.
\vspace{.2cm} \\
\noindent \textbf{Key words:} Fractional $p$-Laplacian, Singular nonlinearity, Existence and nonexistence results, Comparison principle, Sobolev and H\"older Regularity.
\vspace{.1cm}
 \\
\textit{2010 Mathematics Subject Classification:} 	35J35, 35J60, 35K65, 35J75.
\end{abstract}
\section{Introduction}
In this paper, we study the following nonlinear fractional elliptic and singular problem
\begin{equation*}
    (P) \left\{
         \begin{alignedat}{2} 
             {} (-\Delta)^s_p u
             & {}= \frac{K_\delta(x)}{u^\gamma},\ u> 0 
             && \quad\mbox{ in } \, \Omega,           
             \\
             u & {}= 0
             && \quad\mbox{ in }\, \mathbb{R}^N \setminus \Omega   
          \end{alignedat}
     \right.
\end{equation*}
where $\Omega \subset \mathbb{R}^N$ is a smooth bounded domain with $C^{1,1}$ boundary, $s\in (0,1)$, $p\in (1,+\infty)$, $\gamma >0 $
and $K_\delta$ satisfies the growth condition: for any $x\in \Omega$ 
\begin{equation}\label{eq:sing-weight}
\frac{\mathcal{C}_1}{d^\delta(x)} \leq K_\delta(x) \leq \frac{\mathcal{C}_2}{d^\delta(x)}
\end{equation}
for some $\delta\in [0,sp)$, where, for any $x\in \Omega$, $d(x)=\dist(x, \partial \Omega)=\inf_{y\in \partial\Omega}|x-y|$. The operator $(-\Delta)^s_p$ is known as fractional $p$-Laplacian operator and defined as 
$$(-\Delta)^s_p u = 2 \lim_{\epsilon \to 0} \int_{B_\epsilon^c(x)} \frac{[u(x)-u(y)]^{p-1}}{|x-y|^{N+sp}} ~dy$$with the notation $[a-b]^{p-1}= |a-b|^{p-2}(a-b).$\vspace{0.2cm}\\
The nonlinear operator $(-\Delta)^s_p$ is the nonlocal analogue of $p$-Laplacian operator in the (weak) sense that $(1-s) (-\Delta)^s_p \to (-\Delta)_p $ as $s \to 1^-$ and for $p=2$, it reduces to the well known fractional Laplacian operator.
In particular, it is known as an infinitesimal generator of L\'evy stable diffusion process in probability and has several appearance in real life models in phase transitions, crystal dislocations, anomalous diffusion, material science, water, etc (see \cite{BucVal, KuuPala} and their reference within). 
\subsection{State of the art}
In the local setting $(s=1)$ elliptic singular problems appear in several physical models like non newtonian flows in porous media, heterogeneous catalysts. The pioneering work of Crandall et al. \cite{CRT} in the semilinear case $(s=1, p=2)$ is the starting point of the study of singular problems. Later on, much attention has been paid for the subject, leading to an abundant literature investigating a large spectrum of issues
(see for instance \cite{CRT, DMO, DiHernRako, FM, GJ, GR, HernManVega} and surveys \cite{GR1,HM}). When $s=1$, the problem $(P)$ corresponds to the following local elliptic singular problem 
\begin{equation}\label{eq:local}
(-\Delta)_p u = \frac{K(x)}{u^{\gamma}}, \ u>0 \ \text{in}\ \Omega,\ u=0\ \text{on}\ \partial \Omega.
\end{equation}

For $p=2$ and $\gamma >0$, the authors in \cite{CRT} have proved the existence and uniqueness of the classical solution in $C^2(\Omega) \cap C(\overline{\Omega})$ in case of positive bounded weight functions $K$ and have determined the boundary behavior of classical solutions.
In \cite{LazMcK}, H\"older regularity of weak solutions is established in case where $K \in C^{\alpha}(\overline{\Omega})$. In addition, they have proved that weak solutions are not $C^1$ up to boundary if $\gamma >1$ and belong to the energy space if and only if $\gamma < 3$. Furthermore, for the case of $ \gamma\in (0,1)$, Haitao in \cite{Haitao}, and Hirano et al. in \cite{HirSacShi} studied the perturbed \eqref{eq:local} with critical growth type nonlinearities in the sense of Sobolev embedding. Using Perron's method, Haitao proved multiplicity of positive weak solutions while Hirano et al. used the Nehari manifold method to achieve this goal.
Pertaining to the case when K is singular, Gomes \cite{Gomes} studied the corresponding purely singular problem and proved the existence and the uniqueness of $C^1(\overline{\Omega})$ classical solution 
via the integral representation involving the associated Green function. In \cite{DiHernRako}, Di\'az et al. considered the case where K behaves as some negative power of the distance function and proved the regularity of gradient of weak solutions in Lorentz spaces. \\ 
For the quasilinear case, i.e. $p \neq 2$, Giacomoni et al. \cite{GiaSchTak} have studied the problem \eqref{eq:local} when $K\equiv\mbox{Cst}$ and perturbed with subcritical and critical nonlinearities.  Using variational methods, the authors proved in the case $\gamma\in (0,1)$ the existence of multiple solutions in $C^{1, \alpha}(\overline{\Omega})$ and figured out the boundary behavior of weak solutions by constructing suitable sub and  supersolutions. In \cite{Giasree}, the authors proved for larger parameter $\gamma$ and subhomogeneous perturbations the existence and the uniqueness of a weak solution in $W^{1,p}_0(\Omega) \cap C(\overline{\Omega})$ if and only if  $0< \gamma < 2+ \frac{1}{p-1}$. In \cite{Can}, the authors studied the purely singular problem \eqref{eq:local} and proved the existence, the uniqueness of the very weak solution under different summability conditions on weight function $K.$ Concerning the case of $K \in L^\infty_{loc}(\Omega)$ which behaves like $\dist(x, \partial \Omega)^{-\delta}$, the authors in \cite{BougGiacHern} have proved the existence, the boundary behavior and derived the Sobolev regularity according to involved parameters. For a detailed review of elliptic equations involving singular nonlinearities we refer to the monograph \cite{GR} and the overview article   \cite{HernManVega}. \\
Later, singular problems involving the fractional Laplacian operator have been investigated in \cite{AGS} and \cite{BBMP}. Precisely, the authors studied the following singular problem 
\begin{equation}\label{eq:nonlocal}
(-\Delta)^s u = \frac{K(x)}{u^\gamma}, \ u >0 \ \text { in } \, \Omega,\ u = 0 \ \text{ in }\, \mathbb{R}^N \setminus \Omega.   
\end{equation}  
In \cite{BBMP}, Barrios et al. proved the existence and Sobolev regularity of the weak solution for the weight function $K \in L^q(\Omega)$ with $q\geq 1$. Further, in \cite{AGS}, Adimurthi et al. have studied the perturbed singular problem with  subcritical nonlinearities and singular weights of the form \eqref{eq:sing-weight} and discussed the existence, non-existence, uniqueness of classical solutions with respect to the singular parameters. To this aim, by using the integral representation via Green function and the maximum principle, they proved the sharp boundary behavior and  optimal H\"older regularity of the classical solution. Concerning the perturbed problem with critical growth nonlinearities, in \cite{TuGiSe} Giacomoni et al. tackled for any $\gamma>0$ the existence and multiplicity results in $C^\alpha_{loc}(\Omega) \cap L^\infty(\Omega)$ using non-smooth analysis theory. For further issues on nonlocal singular problems, we refer to \cite{AGGS} and references therein. 
\vspace{0.2cm}\\
The issue of regularity of weak solutions to nonlocal problems has a long history and is now quite well understood for the semilinear case $p=2$. Regularity estimates up to the boundary  apart from being relevant from itself lead to important applications as existence of solutions by Schauder fixed-point theorem and together with strong maximum principle multiplicity of solutions.  Setting
\begin{equation}\label{eq:reg}
    (-\Delta)^s_p u=f \ \text{in} \ \Omega \quad u=0 \ \text{in}\ \mathbb{R}^N \setminus \Omega,
\end{equation}
the interior regularity of the solutions was primarily resolved in case of $p=2$ by  Caffarelli and Silvestre \cite{CaffSil1, CaffSil2} using the viscosity solution approach. Boundary regularity was settled by Ros-Oton and Serra in \cite{Ros} for $f \in L^\infty(\Omega)$. For the general case $p \neq 2$, the situation is  in contrast undetermined. The local H\"older regularity is proved in \cite{Castro, Lind}. Sharpness of h\"older exponent was still open. In case where $p \geq 2$, Brasco et al. in \cite{BLS} established the optimal H\"older exponent {\it i.e.} solutions belong to $C_{loc}^{\frac{sp}{p-1}}$ when $f \in L^\infty(\Omega)$ and $\frac{sp}{p-1} <1.$ The proof of boundary regularity is a distinct issue. 
The first work regarding the nonlinear case is Iannizzotto et al. in \cite{IMS}. They proved H\"older regularity up to boundary via barrier arguments.
The result is sharp in case $p\geq 2$ whereas for $p \in (1,2)$, the optimal interior regularity in still an open question.
\vspace{0.2cm}\\
Concerning the singular problem involving $p$-fractional operator, recently Canino et al \cite{CMB} extended the work of \cite{BBMP}. They proved the existence of weak solution by approximation method and Sobolev regularity estimates.
More recently, Mukherjee et al. in \cite{TuhinaSree} studied the perturbed $p$-fractional singular problem with critical growth nonlinearities and studied existence and multiplicity of weak solutions via the minimization method under the Nehari manifold constrain.  \\
Inspired from the above works, in this paper, we study further the nonlinear fractional singular problem $(P)$ in the presence of singular weight $K_\delta$. 
To prove the existence of solutions, we first study the approximated problem $(P_\epsilon^\gamma)$ (see \eqref{eq:approx-prob}) obtained by replacing $u^{-\gamma}$ to $(u+\epsilon)^{-\gamma}$ and singular weight $K_\delta$ by $K_{\epsilon, \delta}$. Using the monotonicity properties of the approximating sequence of solutions $\{u_\epsilon\}_{\epsilon>0}$ for the problem $(P_\epsilon^\gamma)$ and by exploiting suitably the Hardy's inequality, we derive uniform apriori estimates of $u_\epsilon^{\kappa}$ in $W_0^{s,p}(\Omega)$ for some $\kappa \geq 1$ and convergence of $u_\epsilon$ to $u$, in turn to be the minimal weak solution of $(P)$ by comparison principle.  \\
The asymptotic behavior of solutions near boundary was not investigated for $p\neq 2$ in former contributions. In the nonlinear case, one can not use the integral representation with the Green function as in case $p=2$.
To overcome this difficulty, we use the arguments of constructing  explicit barrier functions. 
Due to nonlinear and nonlocal form of $(-\Delta)^s_p$, it requires to perform an involved  task of non trivial  computations. In this regard, first we define a new prototype of barrier function in $\mathbb{R}$ (and $\mathbb{R}^N$) and then explicitly compute the upper and lower estimates of $p$-fractional Laplacian acting on barrier function in $\mathbb{R}_+$ (and then $\mathbb{R}^N_+$). By exploiting the $C^{1,1}$ regularity of the boundary and local deformation arguments we exhibit the existence of sub and supersolutions in terms of distance function, giving rise to the boundary behavior of the  minimal weak solution of $(P)$. As a consequence of the boundary behavior of the $u_\epsilon$, we prove the optimal Sobolev regularity and H\"older regularity of the minimal weak solution that are new and of independent interest. The aforementioned boundary behavior of the approximated sequence $u_\epsilon$ also help us to prove the nonexistence of the weak solution for $\beta \geq sp$ that was not known in previous contributions. 
Additionally, we prove the new comparison principle for sub and  supersolution of the problem $(P)$ for $0 \leq \beta < 1+s -\frac{1}{p}.$ As an application of this comparison principle, we prove the uniqueness of the minimal weak solution in the case $0 \leq \beta < 1+s- \frac{1}{p}.$
\subsection{Function spaces and preliminaries}
Let $\Omega$ be bounded domain and for a measurable function $u: \mathbb{R}^N \to \mathbb{R}$, denote 
$$[u]_{s,p}:= \bigg(\iint_{\mathbb{R}^{2N}} \frac{|u(x)-u(y)|^p}{|x-y|^{N+sp}} ~dx ~dy\bigg)^\frac{1}{p}.$$
Define
$$W^{s,p}(\mathbb{R}^N):= \{u \in L^p(\mathbb{R}^N): [u]_{s,p} < \infty\}$$
endowed with the norm 
$$\|u\|_{s,p, \mathbb{R}^N}= \|u\|_{p} + [u]_{s,p}$$
where 
$\|.\|_{p}$ denote the $L^p$ norm. We also define
$$W_0^{s,p}(\Omega):= \{u \in W^{s,p}(\mathbb{R}^{N}): u=0 \ \text{a.e. in}\ \mathbb{R}^N \setminus \Omega \}$$
endowed with the norm 
$$\|u\|_{s,p}= [u]_{s,p}.$$ We can equivalently define $W_0^{s,p}(\Omega)$ as the closure of $C_c^\infty(\Omega)$ in the norm $[.]_{s,p}$ if $\Omega$ admits continuous boundary (see Theorem $6$, \cite{FSV}) where 
$$C_c^\infty(\Omega):= \{f: \mathbb{R}^N \to \mathbb{R}: f \in C^\infty(\mathbb{R}^N) \ \text{and}\ supp(f)  \Subset \Omega \}.$$
We also define
$$W_{loc}^{s,p}(\Omega)=\{u:\Omega \to \mathbb{R} \ |\ u \in L^p(\omega),\ [u]_{s,p,\omega} < \infty, \ \text{for all}\ \omega \Subset \Omega \}$$
where the localized Gagliardo seminorm is defined as 
$$[u]_{s,p,\omega}:= \left(\iint_{\omega \times \omega}\frac{|u(x)-u(y)|^p}{|x-y|^{N+sp}} ~dx ~dy \right)^{1/p}.$$
\begin{define}\label{def1}
A function $u \in W_{loc}^{s,p}(\Omega)$ is said to be a weak subsolution (resp. supersolution) of $(P)$, if $$u^{\kappa} \in W_{0}^{s,p}(\Omega)\  \text{for some} \ \kappa \geq 1 \ \text{and}\ \inf_K u >0 \ \text{for all} \  K \Subset \Omega$$ and
\begin{equation}\label{not}
\iint_{\mathbb{R}^{2N}} \frac{[u(x)-u(y)]^{p-1} (\phi(x)-\phi(y))}{|x-y|^{N+sp}} ~dx ~dy \leq (\text{resp.}\ \geq) \int_{\Omega} \frac{K_\delta(x)}{u^\gamma} \phi ~dx
\end{equation}
for all $\phi \in \mathbb T=\displaystyle\bigcup_{\tilde \Omega \Subset \Omega}W_0^{s,p}(\tilde \Omega)$. \\
A function which is both sub and supersolution of $(P)$ is called a weak solution to $(P)$.
\end{define}
\noindent  By virtue of the nonlinearity of the operator and the absence of integration by parts formula, such a notion of solution is considered.
Before, stating our main results, we state some preliminary results proved in \cite{BraP, CMB}:
\begin{pro}{(Lemma 3.5, \cite{CMB})}\label{prelim1}
For $\epsilon>0$ and $q>1$. Set
$$S_\epsilon^x:= \{(x,y): x \geq \epsilon, y \geq 0\}, \ S_\epsilon^y:= \{(x,y): x \geq 0, y \geq \epsilon\}.$$
Then $$|x^q-y^q| \geq \epsilon^{q-1} |x-y| \ \text{for all}\ \ (x,y) \in S_\epsilon^x \cup S_\epsilon^y.$$
\end{pro}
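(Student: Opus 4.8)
The plan is to reduce the assertion to a one-dimensional convexity estimate. First I would observe that the set $S_\epsilon^x\cup S_\epsilon^y$, as well as both sides of the claimed inequality, are invariant under the swap $(x,y)\mapsto(y,x)$, so without loss of generality I may assume $x\ge y\ge 0$; in this regime membership in $S_\epsilon^x\cup S_\epsilon^y$ forces $x\ge\epsilon$. Thus it suffices to prove $x^q-y^q\ge\epsilon^{q-1}(x-y)$ for $x>y\ge0$ with $x\ge\epsilon$ (the case $x=y$ being trivial), after which $x^q-y^q=|x^q-y^q|$ and $x-y=|x-y|$ give the statement.

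The \emph{key step} is the following: since $q>1$, the function $f(t)=t^q$ is convex on $[0,\infty)$, so for fixed $x>0$ the secant slope $y\mapsto\frac{f(x)-f(y)}{x-y}$ is nondecreasing on $[0,x)$. This is seen by differentiating in $y$: the numerator of the derivative is $f(x)-f(y)-f'(y)(x-y)$, which is $\ge0$ by the tangent-line inequality for convex $f$. Evaluating at the left endpoint $y=0$ then yields $\frac{x^q-y^q}{x-y}\ge\frac{x^q}{x}=x^{q-1}\ge\epsilon^{q-1}$, the last inequality because $x\ge\epsilon$ and $q-1\ge0$. Multiplying by $x-y>0$ completes the argument.

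There is essentially no serious obstacle here; the only points demanding a moment's care are the justification that the secant slope attains its minimum exactly at $y=0$ (not merely asymptotically), and checking that the symmetrization correctly collapses the union $S_\epsilon^x\cup S_\epsilon^y$ to the single case $x\ge\epsilon$. If one prefers to avoid convexity, an equivalent elementary route is a direct case split: for $y\ge\epsilon$ apply the mean value theorem on $[y,x]$ to write $x^q-y^q=q\,\xi^{q-1}(x-y)$ with $\xi\ge\epsilon$; and for $0\le y<\epsilon\le x$ split $x^q-y^q=(x^q-\epsilon^q)+(\epsilon^q-y^q)$, bounding the first term by $\epsilon^{q-1}(x-\epsilon)$ via the mean value theorem and the second by $\epsilon^{q-1}(\epsilon-y)$ using $\epsilon^{q-1}y\ge y^q$. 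The convexity proof is, however, the shortest.
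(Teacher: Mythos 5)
The paper does not reproduce a proof of this proposition; it cites Lemma~3.5 of \cite{CMB} directly. Your argument is correct. The symmetrization step is justified because both $S_\epsilon^x\cup S_\epsilon^y$ and the two sides of the inequality are invariant under swapping $x$ and $y$, and once $x\ge y\ge 0$ is assumed, membership in either set indeed forces $x\ge\epsilon$ (in $S_\epsilon^y$ this follows from $x\ge y\ge\epsilon$). The convexity step is also sound: for $f(t)=t^q$ with $q>1$ the secant slope $y\mapsto\frac{f(x)-f(y)}{x-y}$ on $[0,x)$ has derivative with numerator $f(x)-f(y)-f'(y)(x-y)\ge 0$ by the tangent-line inequality, so the slope is nondecreasing and its minimum over $y\in[0,x)$ is attained at $y=0$, giving $\frac{x^q-y^q}{x-y}\ge x^{q-1}\ge\epsilon^{q-1}$. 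The elementary alternative you sketch (mean value theorem when $y\ge\epsilon$; split at $\epsilon$ together with $\epsilon^{q-1}y\ge y^q$ for $0\le y<\epsilon$) is also complete and is closer in spirit to the computation typically found in the cited source; the convexity route is shorter and buys the uniformity across both cases in one stroke.
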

\begin{pro}{(Lemma 3.3, \cite{BraP})}\label{prelim2}
Let $g \in L^q(\Omega)$ with $q > \frac{N}{sp}$ and $u \in W_0^{s,p}(\Omega) \cap L^\infty(\Omega)$ satisfying
\begin{equation*}
\iint_{\mathbb{R}^{2N}} \frac{[u(x)-u(y)]^{p-1}(\phi(x)-\phi(y))}{|x-y|^{N+sp}} ~dx ~dy = \int_{\Omega} g \phi ~dx
\end{equation*}
for all $\phi \in W_0^{s,p}(\Omega).$ Then, for every $C^1$ convex  function $\Phi : \mathbb{R} \to \mathbb{R}$, the composition $w= \Phi \circ u$ satisfies
 \begin{equation*}
\iint_{\mathbb{R}^{2N}} \frac{[w(x)-w(y)]^{p-1}(\phi(x)-\phi(y))}{|x-y|^{N+sp}} ~dx ~dy \leq  \int_{\Omega} g |\Phi'(u)|^{p-2} \Phi'(u) \phi ~dx.
\end{equation*}
for all nonnegative functions $\phi \in W_0^{s,p}(\Omega).$ 
\end{pro}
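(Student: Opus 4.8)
The plan is to derive the asserted inequality for $w=\Phi\circ u$ by using, as a test function in the equation satisfied by $u$, the nonnegative function
\[
\psi := |\Phi'(u)|^{p-2}\Phi'(u)\,\phi ,
\]
and then to compare the two resulting double integrals through an elementary pointwise inequality. Throughout write $T(\tau):=|\tau|^{p-2}\tau$, so that $[a-b]^{p-1}=T(a-b)$ in the notation of the introduction; $T$ is odd and nondecreasing and satisfies $T(\lambda\tau)=\lambda^{p-1}T(\tau)$ for $\lambda\ge 0$. Set $M:=\|u\|_{L^\infty(\Omega)}$. Since $\Phi\in C^1$, $\Phi|_{[-M,M]}$ is Lipschitz with some constant $L$, so $[w]_{s,p}\le L\,[u]_{s,p}<\infty$, while $|\Phi'(u)|^{p-1}$ is bounded a.e.; combining these with H\"older's inequality and the fractional Sobolev embedding — here the hypothesis $q>N/(sp)$ is used, to ensure $g\phi\in L^1(\Omega)$ — both sides of the claimed inequality are finite for every nonnegative $\phi\in W^{s,p}_0(\Omega)$. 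By the truncation $\phi\mapsto\min(\phi,k)$ it suffices to argue for bounded $\phi$, and by mollifying, $\Phi_n:=\Phi\ast\rho_n$, it suffices to treat smooth convex $\Phi$; for such $\phi$ and $\Phi$ one verifies that $\psi\in W^{s,p}_0(\Omega)$, so that $\psi$ is an admissible test function (the one delicate point of this verification is discussed at the end).

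The core of the argument is the pointwise inequality: for all $a,b\in\mathbb{R}$ and all $\alpha,\beta\ge 0$,
\[
[\Phi(a)-\Phi(b)]^{p-1}(\alpha-\beta)\ \le\ [a-b]^{p-1}\big(T(\Phi'(a))\,\alpha-T(\Phi'(b))\,\beta\big).
\]
Since both sides are invariant under the exchange $(a,\alpha)\leftrightarrow(b,\beta)$ (using that $T$ is odd), we may assume $a\ge b$, so that $[a-b]^{p-1}=(a-b)^{p-1}\ge 0$. Convexity of $\Phi$ gives $\Phi'(b)(a-b)\le\Phi(a)-\Phi(b)\le\Phi'(a)(a-b)$; applying the nondecreasing map $T$ and using its homogeneity with $\lambda=a-b$ yields
\[
(a-b)^{p-1}T(\Phi'(b))\ \le\ [\Phi(a)-\Phi(b)]^{p-1}\ \le\ (a-b)^{p-1}T(\Phi'(a)).
\]
Denoting these three numbers by $Q\le R\le P$, we obtain, for $\alpha,\beta\ge 0$, that $P\alpha-Q\beta-R(\alpha-\beta)=(P-R)\alpha+(R-Q)\beta\ge 0$, which is precisely the claim.

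Now fix a nonnegative bounded $\phi$ and a smooth convex $\Phi$. Applying the pointwise inequality with $a=u(x)$, $b=u(y)$, $\alpha=\phi(x)$, $\beta=\phi(y)$, dividing by $|x-y|^{N+sp}>0$ and integrating over $\mathbb{R}^{2N}$ — every integrand being absolutely integrable by the first paragraph — we get
\[
\iint_{\mathbb{R}^{2N}}\frac{[w(x)-w(y)]^{p-1}(\phi(x)-\phi(y))}{|x-y|^{N+sp}}\,dx\,dy\ \le\ \iint_{\mathbb{R}^{2N}}\frac{[u(x)-u(y)]^{p-1}(\psi(x)-\psi(y))}{|x-y|^{N+sp}}\,dx\,dy ,
\]
and the right-hand side equals $\int_\Omega g\,\psi\,dx=\int_\Omega g\,|\Phi'(u)|^{p-2}\Phi'(u)\,\phi\,dx$ by the equation for $u$, since $\psi\in W^{s,p}_0(\Omega)$. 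For a general $C^1$ convex $\Phi$ we apply this to the mollifications $\Phi_n=\Phi\ast\rho_n$ (smooth and convex, with $\Phi_n\to\Phi$ and $\Phi_n'\to\Phi'$ uniformly on $[-M,M]$) and pass to the limit by dominated convergence: the left integrand is dominated by $L^{p-1}|u(x)-u(y)|^{p-1}|\phi(x)-\phi(y)|/|x-y|^{N+sp}$ thanks to the $n$-uniform bound $|\Phi_n(u(x))-\Phi_n(u(y))|\le L|u(x)-u(y)|$, while $\int_\Omega g\,|\Phi_n'(u)|^{p-2}\Phi_n'(u)\,\phi\,dx\to\int_\Omega g\,|\Phi'(u)|^{p-2}\Phi'(u)\,\phi\,dx$ because $|\Phi_n'(u)|^{p-2}\Phi_n'(u)\to|\Phi'(u)|^{p-2}\Phi'(u)$ a.e.\ and boundedly and $g\phi\in L^1(\Omega)$. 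Removing the boundedness restriction on $\phi$ by the truncation mentioned above completes the proof.

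The pointwise inequality is completely elementary and the limiting arguments are routine, so the step I expect to require genuine care is the admissibility of $\psi=|\Phi'(u)|^{p-2}\Phi'(u)\,\phi$, i.e.\ the claim $\psi\in W^{s,p}_0(\Omega)$. This is straightforward when $p\ge 2$ — there $T$ is locally Lipschitz, so $T\circ\Phi_n'$ is Lipschitz on $[-M,M]$, hence $(T\circ\Phi_n')\circ u$ has finite Gagliardo seminorm and is bounded, and its product with the bounded function $\phi\in W^{s,p}_0(\Omega)$ stays in $W^{s,p}_0(\Omega)$ — and it is also straightforward when $\Phi'$ is bounded away from $0$ on $[-M,M]$. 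However, when $1<p<2$ and $\Phi'$ vanishes somewhere in $[-M,M]$, the map $T$ is only $(p-1)$-H\"older near the origin, and near the zero set of $\Phi'$ the composition $|\Phi'(u)|^{p-2}\Phi'(u)$ need not inherit the Gagliardo regularity of $u$; this borderline regime has to be handled separately — for instance by truncating $\Phi'$ away from its zeros and passing to a further limit, or by restricting from the outset to the convex functions actually used in the applications, for which $\Phi'$ does not vanish on the range of $u$ — and it is, in my view, the main technical obstacle.
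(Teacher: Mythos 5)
The paper does not prove this proposition: it quotes it verbatim from Brasco--Parini (Lemma~3.3 in \cite{BraP}) as a known preliminary, so there is no in-paper proof to compare against. Your argument is the natural one and almost certainly the one in \cite{BraP}: test the equation for $u$ with $\psi=|\Phi'(u)|^{p-2}\Phi'(u)\,\phi$ and compare the two Gagliardo bilinear forms through an elementary pointwise inequality. Your derivation of that inequality is correct and clean: the symmetry under $(a,\alpha)\leftrightarrow(b,\beta)$ lets you take $a\geq b$, convexity gives $\Phi'(b)(a-b)\leq\Phi(a)-\Phi(b)\leq\Phi'(a)(a-b)$, the monotonicity and degree-$(p-1)$ positive homogeneity of $T(\tau)=|\tau|^{p-2}\tau$ promote this to $(a-b)^{p-1}T(\Phi'(b))\leq[\Phi(a)-\Phi(b)]^{p-1}\leq(a-b)^{p-1}T(\Phi'(a))$, and $\alpha,\beta\geq 0$ finishes.

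The one genuine gap is exactly the one you flag yourself, and you should not minimize it: for $1<p<2$ it is not clear that $\psi\in W^{s,p}_0(\Omega)$ when $\Phi'$ vanishes on the essential range of $u$. Your diagnosis is correct: $T$ is only $(p-1)$-H\"older at the origin, so the best a priori bound is $[T(\Phi_n'(u))]_{s,p}^p\lesssim\iint|u(x)-u(y)|^{p(p-1)}|x-y|^{-N-sp}\,dx\,dy$, and since $p(p-1)<p$ this is not controlled by $[u]_{s,p}^p$; boundedness of $u$ does not rescue it because the problem is near the diagonal. Note also that the ``restrict to the applications'' escape you mention does not literally apply: in the paper's own use (proof of Theorem~\ref{esix-prob}, Case~2, and Corollary~\ref{cor:bdry+sobreg}) one takes $\Phi(t)=t^\theta$ with $\theta>1$, so $\Phi'(0)=0$ and $0$ is in the essential range of $u_\epsilon$. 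What actually saves the applications is not the generality of Proposition~\ref{prelim2} but the specific choice $\phi=\Phi(u_\epsilon)$, for which $\psi=\theta^{p-1}u_\epsilon^{\theta p-(p-1)}$ has exponent $\theta p-(p-1)\geq 1$ and thus lies in $W^{s,p}_0(\Omega)$ by the usual locally Lipschitz chain rule. To prove the proposition in the generality stated you would need either to strengthen the hypotheses (e.g.\ $p\geq 2$, or $\inf_{[-M,M]}|\Phi'|>0$), or to truncate $T$ away from $0$ so that the regularized $\psi$ is admissible and then pass to the limit after re-deriving the pointwise inequality without exact homogeneity --- a nontrivial extra step that your proof currently leaves open. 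Everything else in your argument, including the truncation of $\phi$, the mollification of $\Phi$, and the dominated-convergence passage, is sound.
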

\noindent Having in mind Proposition \ref{prelim1} and the condition $u^\kappa \in W_0^{s,p}(\Omega), \ \kappa \geq 1$ in definition \ref{def1}, $u$ satisfies the following definition of the boundary datum (see Proposition 1.5 in \cite{CMB}):
\begin{define}
We say that a function $u=0$ in $\mathbb{R}^N \setminus \Omega$ satisfies $u \leq 0$ on $\partial \Omega$ in sense that for $\epsilon>0$, $(u-\epsilon)^+ \in W_0^{s,p}(\Omega)$.
\end{define}

\noindent For a fixed parameter $\epsilon>0$, we define a sequence of function $K_{\epsilon, \delta}: \mathbb{R}^N \to \mathbb{R}_+$ as
\begin{equation*}
 \begin{aligned}
 K_{\epsilon, \delta}(x)=  \left\{
 \begin{array}{ll}
 (K_\delta^{-\frac{1}{\delta}}(x)+ \epsilon^{\frac{\gamma+p-1}{sp-\delta}})^{-\delta}  &  \text{ if } x \in \Omega, \\
 0 & \text{ else, } \\
 \end{array} 
 \right.
 \end{aligned}
 \end{equation*}
and $K_{\epsilon, \delta}$ is an increasing function as $\epsilon \downarrow 0$, $K_{\epsilon, \delta} \to K_\delta$ a.e. in $\Omega$ and there exist two positive constants $\mathcal{C}_3, \mathcal{C}_4$ such that, for any $x\in \Omega$,
\begin{equation}\label{qasd}
\frac{\mathcal{C}_3}{\left(d(x)+ \epsilon^{\frac{\gamma+p-1}{sp-\delta}} \right)^\delta} \leq K_{\epsilon, \delta}(x) \leq \frac{\mathcal{C}_4}{\left(d(x)+ \epsilon^{\frac{\gamma+p-1}{sp-\delta}} \right)^\delta}.
\end{equation} Define the approximated problem as
\begin{equation}\label{eq:approx-prob}
    (P_\epsilon^\gamma) \left\{
         \begin{alignedat}{2} 
             {} (-\Delta)^s_p u
             & {}= \frac{K_{\epsilon, \delta}(x)}{\left(u+ \epsilon\right)^\gamma}
             && \quad\mbox{ in } \, \Omega,           
             \\
             u & {}= 0
             && \quad\mbox{ in }\, \mathbb{R}^N \setminus \Omega.   
          \end{alignedat}
     \right.
\end{equation}
\begin{pro}\label{prelim}
For any $\epsilon >0$ and $\gamma \geq 0$, there exists a unique weak solution $u_\epsilon \in W_0^{s,p}(\Omega) \cap C^{0, \ell}(\overline{\Omega})$ of the problem $(P_\epsilon^\gamma)$ {\it i.e.} 
\begin{equation}\label{oiu}
\iint_{\mathbb{R}^{2N}} \frac{[u_\epsilon(x)-u_\epsilon(y)]^{p-1} (\phi(x)-\phi(y))}{|x-y|^{N+sp}} ~dx ~dy = \int_{\Omega} \frac{K_{\epsilon, \delta}(x)}{\left(u+ \epsilon\right)^\gamma} \phi ~dx
\end{equation}
for all $\phi \in W_0^{s,p}(\Omega)$ and for some $\ell \in (0,1).$
Moreover, the sequence $\{u_\epsilon\}_{\epsilon>0}$ satisfies  $u_\epsilon > 0$ in $\Omega$, 
$$u_{\epsilon_1}(x) < u_{\epsilon_2}(x)\ \text{for any} \ x \in \Omega \ \text{and} \ \epsilon_2 < \epsilon_1 $$ and  for any $\Omega'\Subset \Omega$, there exists $\sigma=\sigma(\Omega')>0$ such that for any $\epsilon\in (0,1)$:
\begin{equation*}
 \sigma \leq  u_1(x) < u_\epsilon(x)\ \text{for any} \ x \in \Omega'.
\end{equation*}
\end{pro}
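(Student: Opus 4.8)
The plan is to obtain existence and uniqueness simultaneously from a strictly convex variational problem, then to bootstrap H\"older regularity from known results and to read off the remaining qualitative properties from comparison arguments. The key observation is that for fixed $\epsilon>0$ the reaction term in $(P_\epsilon^\gamma)$ is a \emph{bounded} Carath\'eodory function which is \emph{non-increasing} in the unknown.

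\textbf{Existence and uniqueness.} Write $g_\epsilon(x,t):=K_{\epsilon,\delta}(x)\,(t^++\epsilon)^{-\gamma}$. By \eqref{qasd} and $d(x)+\epsilon^{(\gamma+p-1)/(sp-\delta)}\ge \epsilon^{(\gamma+p-1)/(sp-\delta)}$ one has $0\le g_\epsilon(x,t)\le M_\epsilon$ for a finite constant $M_\epsilon=M_\epsilon(\epsilon)$, and $t\mapsto g_\epsilon(x,t)$ is non-increasing; hence $G_\epsilon(x,t):=\int_0^t g_\epsilon(x,\tau)\,d\tau$ is $C^1$ and concave in $t$, with $|G_\epsilon(x,t)|\le M_\epsilon|t|$. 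On $W_0^{s,p}(\Omega)$ consider
\[
J_\epsilon(u):=\frac{1}{p}[u]_{s,p}^p-\int_\Omega G_\epsilon(x,u(x))\,dx .
\]
The linear growth of $G_\epsilon$ and the fractional Sobolev embedding make $J_\epsilon$ finite and coercive, while the compact embedding $W_0^{s,p}(\Omega)\hookrightarrow L^p(\Omega)$ together with dominated convergence make $u\mapsto\int_\Omega G_\epsilon(x,u)$ weakly continuous, so $J_\epsilon$ is weakly lower semicontinuous and attains its infimum at some $u_\epsilon$. Since $u\mapsto \frac1p[u]_{s,p}^p$ is strictly convex (as $p>1$) and $u\mapsto -\int_\Omega G_\epsilon(x,u)$ is convex (as $G_\epsilon(x,\cdot)$ is concave), $J_\epsilon\in C^1(W_0^{s,p}(\Omega))$ is strictly convex; therefore $u_\epsilon$ is its \emph{unique} critical point, i.e. the unique function satisfying \eqref{oiu}. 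Testing \eqref{oiu} with $\phi=u_\epsilon^-$ and using the elementary inequality $\langle (-\Delta)^s_p u_\epsilon,u_\epsilon^-\rangle\le -[u_\epsilon^-]_{s,p}^p$ together with $g_\epsilon\ge 0$ forces $u_\epsilon^-\equiv 0$, so $u_\epsilon\ge 0$.

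\textbf{Regularity, positivity, monotonicity, and the lower bound.} As $g_\epsilon(\cdot,u_\epsilon(\cdot))\in L^\infty(\Omega)$, the global H\"older estimate for $(-\Delta)^s_p$ with bounded right-hand side (see \cite{IMS}) gives $u_\epsilon\in C^{0,\ell}(\overline\Omega)$ for some $\ell=\ell(N,s,p)\in(0,1)$. Since $g_\epsilon(x,u_\epsilon(x))>0$ a.e. in $\Omega$ and $u_\epsilon\not\equiv 0$, the strong maximum principle for $(-\Delta)^s_p$ (equivalently, comparison with a suitable multiple of the $(s,p)$-torsion function, using $\delta\ge 0$ to bound $g_\epsilon(x,u_\epsilon)$ below by a positive constant) yields $u_\epsilon>0$ in $\Omega$. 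For the monotonicity, if $\epsilon_2<\epsilon_1$ then $g_{\epsilon_1}(x,t)\le g_{\epsilon_2}(x,t)$ for all $t\ge 0$, since both $K_{\epsilon,\delta}$ and $(t+\epsilon)^{-\gamma}$ increase as $\epsilon\downarrow 0$; testing the difference of the equations \eqref{oiu} for $u_{\epsilon_1}$ and $u_{\epsilon_2}$ with $(u_{\epsilon_1}-u_{\epsilon_2})^+$, using the monotonicity of $(-\Delta)^s_p$ on the left and, on $\{u_{\epsilon_1}>u_{\epsilon_2}\}$, the chain $g_{\epsilon_1}(x,u_{\epsilon_1})\le g_{\epsilon_1}(x,u_{\epsilon_2})\le g_{\epsilon_2}(x,u_{\epsilon_2})$ on the right, gives $u_{\epsilon_1}\le u_{\epsilon_2}$. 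The inequality is strict in $\Omega$ by the strong comparison principle: an interior contact point $x_0$ would, on evaluating the equations there and using $u_{\epsilon_1}\le u_{\epsilon_2}$ on $\mathbb{R}^N$ together with the monotonicity of $t\mapsto[t]^{p-1}$, force $g_{\epsilon_1}(x_0,u_{\epsilon_1}(x_0))\ge g_{\epsilon_2}(x_0,u_{\epsilon_2}(x_0))$, contradicting the strict ordering of the data. Finally, for $\Omega'\Subset\Omega$ the monotonicity gives $u_\epsilon\ge u_1$ on $\Omega'$ for all $\epsilon\in(0,1)$, and $\sigma:=\min_{\overline{\Omega'}}u_1>0$ by the continuity and positivity of $u_1$.

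\textbf{Main obstacle.} The delicate point is the \emph{strict} inequality $u_{\epsilon_1}<u_{\epsilon_2}$, which rests on a strong comparison/maximum principle for $(-\Delta)^s_p$; the contact-point argument above must be handled with care when $p\neq 2$ because of the principal-value evaluation of the operator. Everything else is routine once the variational framework and the bounded-datum regularity theory of \cite{IMS} are in place.
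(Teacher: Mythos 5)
Your variational scheme — truncate the reaction to $g_\epsilon(x,t)=K_{\epsilon,\delta}(x)(t^++\epsilon)^{-\gamma}$, minimize the coercive, strictly convex energy $J_\epsilon$ over $W_0^{s,p}(\Omega)$, and then restore nonnegativity by testing \eqref{oiu} with $u_\epsilon^-$ — is correct and is, in substance, the argument behind Proposition~2.3 of \cite{CMB}, which together with Lemma~2.4 of \cite{CMB} and Theorem~1.1 of \cite{IMS} is exactly what the paper's one-line proof invokes. The appeal to \cite{IMS} for global $C^{0,\ell}(\overline\Omega)$ regularity once the right-hand side is bounded, the weak comparison $u_{\epsilon_1}\le u_{\epsilon_2}$ via the test function $(u_{\epsilon_1}-u_{\epsilon_2})^+$ combined with the monotonicity of $\epsilon\mapsto g_\epsilon$ and of $t\mapsto g_\epsilon(x,t)$, and the lower bound $\sigma$ from continuity and positivity of $u_1$ on the compact set $\overline{\Omega'}$, all match the cited references' strategy. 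So your route is essentially the paper's, with the details spelled out.

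The one genuine gap is the one you yourself flag: the \emph{strict} inequality $u_{\epsilon_1}<u_{\epsilon_2}$ in $\Omega$. Your contact-point argument evaluates $(-\Delta)^s_p u_{\epsilon_i}$ pointwise at a touching point $x_0$, which presupposes that the principal-value integral converges there. That is not granted by the regularity you have in hand: absolute convergence near the diagonal needs a local H\"older exponent strictly larger than $sp/(p-1)$, which is precisely the excluded endpoint of the interior regularity scale of \cite{BLS}, and convergence by cancellation would require differentiability at $x_0$, which is not available in general (certainly not for $1<p<2$). To make this step rigorous one should invoke a strong comparison principle for the fractional $p$-Laplacian stated for bounded weak sub/supersolutions with strictly ordered data — such results exist in the literature precisely to avoid pointwise evaluation of the operator — rather than run the pointwise argument. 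Alternatively, observe that the non-strict inequality your test-function argument already delivers is all that is used of this proposition later in the paper, so the strictness can be delegated to a strong minimum/comparison principle of the Del Pezzo--Quaas type rather than argued by hand. Two minor bookkeeping remarks: you implicitly use $u_\epsilon\not\equiv 0$ (immediate, since the right-hand side of \eqref{oiu} is strictly positive against any nonnegative $\phi\not\equiv 0$), and the weak continuity of $u\mapsto\int_\Omega G_\epsilon(x,u)\,dx$ is best justified by strong $L^1$ convergence along a subsequence from the compact embedding $W_0^{s,p}(\Omega)\subset\subset L^q(\Omega)$ for $q<p_s^\ast$, together with the linear bound $|G_\epsilon(x,t)|\le M_\epsilon|t|$; both are routine but worth stating.
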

\begin{proof}
The proof follows from Proposition 2.3, Lemma 2.4 in \cite{CMB} and Theorem 1.1 in \cite{IMS}.
\end{proof}
\subsection{Main results}
\noindent Here we describe our main results. To prove the uniqueness and nonexistence result, we establish the following comparison principle:
\begin{thm}\label{WCP}
For $0\leq  \delta < 1+s-\frac{1}{p},\ \gamma \geq 0$, let $u$ be a subsolution of $(P)$ and $\tilde{v}$ be a supersolution of $(P)$ in the sense of definition \ref{def1}. Then $u \leq \tilde{v}$ a.e. in $\Omega.$
\end{thm}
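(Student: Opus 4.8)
Write $w:=u-\tilde v$; the goal is to prove $w^+\equiv 0$ a.e. First I would subtract the two defining inequalities of Definition~\ref{def1}: testing the subsolution inequality for $u$ and the supersolution inequality for $\tilde v$ against the \emph{same} nonnegative $\phi\in\mathbb T$ and subtracting gives
\[
\iint_{\mathbb R^{2N}} \frac{\big([u(x)-u(y)]^{p-1}-[\tilde v(x)-\tilde v(y)]^{p-1}\big)\big(\phi(x)-\phi(y)\big)}{|x-y|^{N+sp}}\,dx\,dy\ \le\ \int_\Omega K_\delta(x)\Big(\frac{1}{u^\gamma}-\frac{1}{\tilde v^\gamma}\Big)\phi\,dx .
\]
If $\phi\ge 0$ is supported in $\{w>0\}=\{u>\tilde v\}$, then on that set $u>\tilde v>0$ and, since $t\mapsto t^{-\gamma}$ is nonincreasing on $(0,\infty)$ for $\gamma\ge 0$ and $K_\delta\ge 0$, the right-hand side is $\le 0$ (identically $0$ when $\gamma=0$). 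So for every admissible such $\phi$ the left-hand side is $\le 0$, and the whole argument reduces to exploiting this bilinear inequality.

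The natural choice $\phi=w^+=(u-\tilde v)^+$ is not directly legitimate: it need not be compactly supported in $\Omega$, and since only $u^\kappa,\tilde v^\kappa$ (not $u,\tilde v$) are known to lie in $W_0^{s,p}(\Omega)$, it need not even belong to $W^{s,p}(\mathbb R^N)$. I would therefore test with the admissible functions $\phi_{n,k}:=\zeta_n^{\,p}\,T_k(w^+)$, where $T_k(t):=\min\{t,k\}$ and $\{\zeta_n\}\subset C_c^\infty(\Omega)$ is an increasing family of cut-offs with $0\le\zeta_n\le 1$, $\zeta_n\equiv 1$ on $\{x\in\Omega: d(x)>2/n,\ |x|<n\}$, $\zeta_n\equiv 0$ on $\{d(x)<1/n\}$ and $|\nabla\zeta_n|\lesssim n$; these lie in $\mathbb T$ because $w^+\in W_{loc}^{s,p}(\Omega)$ (as $u,\tilde v\in W_{loc}^{s,p}(\Omega)$) and $T_k(w^+)$ is in addition bounded. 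Writing $\Phi_k:=T_k(w^+)$ and using the symmetric identity
\[
\phi_{n,k}(x)-\phi_{n,k}(y)=\tfrac{\zeta_n^{\,p}(x)+\zeta_n^{\,p}(y)}{2}\big(\Phi_k(x)-\Phi_k(y)\big)+\tfrac{\Phi_k(x)+\Phi_k(y)}{2}\big(\zeta_n^{\,p}(x)-\zeta_n^{\,p}(y)\big),
\]
the left-hand side splits as $\mathrm I_{n,k}+\mathrm{II}_{n,k}$, a ``diagonal'' part and a cut-off remainder. The diagonal part $\mathrm I_{n,k}$ is nonnegative: since $t\mapsto[t]^{p-1}$ is strictly increasing and $T_k\circ(\cdot)^+$ is nondecreasing, both $[u(x)-u(y)]^{p-1}-[\tilde v(x)-\tilde v(y)]^{p-1}$ and $\Phi_k(x)-\Phi_k(y)$ carry the sign of $w(x)-w(y)$, so their product is $\ge 0$, while $\zeta_n^{\,p}\ge 0$; and since $\zeta_n\uparrow 1$ in $\Omega$, monotone convergence shows $\mathrm I_{n,k}$ is increasing in $n$ and $\liminf_n\mathrm I_{n,k}$ dominates a fixed positive multiple of $\iint_{\mathbb R^{2N}}\tfrac{([u(x)-u(y)]^{p-1}-[\tilde v(x)-\tilde v(y)]^{p-1})(\Phi_k(x)-\Phi_k(y))}{|x-y|^{N+sp}}\,dx\,dy$ (pairs with one point outside $\Omega$, where the integrand vanishes, cause only a harmless factor).

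The heart of the matter — and what I expect to be the main obstacle — is to show that the remainder $\mathrm{II}_{n,k}\to 0$ as $n\to\infty$ for each fixed $k$. One bounds $\big|[u(x)-u(y)]^{p-1}-[\tilde v(x)-\tilde v(y)]^{p-1}\big|\le|u(x)-u(y)|^{p-1}+|\tilde v(x)-\tilde v(y)|^{p-1}$, uses $\Phi_k\le k$ and $|\zeta_n^{\,p}(x)-\zeta_n^{\,p}(y)|\lesssim|\zeta_n(x)-\zeta_n(y)|\lesssim\min\{1,n|x-y|\}$, the latter supported on the shrinking tube $\{d(x)<2/n\}\cup\{d(y)<2/n\}$; the resulting boundary integrals are then controlled by combining the weight bound $K_\delta(x)\le\mathcal C_2\,d(x)^{-\delta}$, a fractional Hardy-type inequality (roughly $\int_\Omega|\psi|^p d^{-sp}\lesssim[\psi]_{s,p}^p$ for $\psi\in W_0^{s,p}(\Omega)$) applied to $u^\kappa,\tilde v^\kappa$, and H\"older's inequality, together with, if needed, absorbing part of $\mathrm{II}_{n,k}$ back into $\mathrm I_{n,k}$ by Young's inequality. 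It is exactly at this step that $\delta<1+s-\tfrac1p$ is used: $1+s-\tfrac1p$ is precisely the threshold at which the boundary contributions (of order $\sim n^{sp}\int_{\{d<2/n\}}(\cdots)$) remain bounded and vanish as $n\to\infty$; note that when $sp\le 1$ the standing constraint $\delta<sp$ already forces $\delta<1+s-\tfrac1p$, consistently with the estimate being automatic in that regime. Careful handling of the nonlocal tails near $\partial\Omega$ is where the bulk of the (nonroutine) computation lies.

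Granting $\mathrm{II}_{n,k}\to 0$, the inequality $\mathrm I_{n,k}+\mathrm{II}_{n,k}\le 0$ together with the monotone convergence of $\mathrm I_{n,k}$ forces, for every $k$,
\[
\iint_{\mathbb R^{2N}}\frac{\big([u(x)-u(y)]^{p-1}-[\tilde v(x)-\tilde v(y)]^{p-1}\big)\big(T_k(w^+)(x)-T_k(w^+)(y)\big)}{|x-y|^{N+sp}}\,dx\,dy=0 .
\]
Since the integrand is nonnegative it vanishes a.e.; by strict monotonicity of $t\mapsto[t]^{p-1}$, whenever $w^+(x)\ne w^+(y)$ one chooses $k$ large enough that $T_k(w^+)(x)\ne T_k(w^+)(y)$, which forces $u(x)-u(y)=\tilde v(x)-\tilde v(y)$, i.e.\ $w(x)=w(y)$, hence $w^+(x)=w^+(y)$ — a contradiction. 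Thus $w^+(x)=w^+(y)$ for a.e.\ $(x,y)\in\mathbb R^{2N}$, so $w^+$ is a.e.\ constant; since $u=\tilde v=0$ a.e.\ in $\mathbb R^N\setminus\Omega$ that constant is $0$, i.e.\ $u\le\tilde v$ a.e.\ in $\Omega$.
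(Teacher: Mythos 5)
Your approach is a direct ``test against truncated cut-offs'' argument, genuinely different from the paper's. The paper follows Canino--Montoro--Sciunzi--Squassina: it introduces the regularized functional $J_\epsilon(w)=\tfrac1p[w]_{s,p}^p-\int_\Omega K_\delta\,G_\epsilon(w)\,dx$, minimizes it over the convex constraint set $\mathcal L=\{\phi\in W_0^{s,p}(\Omega):0\le\phi\le\tilde v\}$, and then proves $u\le w_0\le\tilde v$ for the minimizer $w_0$. In that argument the restriction $\delta<1+s-\tfrac1p$ enters only to guarantee weak lower semicontinuity of $J_\epsilon$: one needs $\int_\Omega d^{-\delta}|w_n-w|\to0$ for $w_n\rightharpoonup w$ in $W_0^{s,p}(\Omega)$, which is obtained by a three-factor H\"older inequality combined with the Hardy inequality, and $\delta<1+s-\tfrac1p$ is exactly the condition that makes the power of $d$ in the third factor lie in $L^r(\Omega)$. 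Crucially, all those estimates are applied to functions that are \emph{already} in $W_0^{s,p}(\Omega)$; the detour through a minimizer living in the energy space is precisely what your argument lacks.

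The gap in your proposal is the step $\mathrm{II}_{n,k}\to0$, which you yourself flag as ``the heart of the matter'' and leave as a sketch. It is not merely a technical step: $\mathrm{II}_{n,k}$ integrates $\bigl|[u(x)-u(y)]^{p-1}-[\tilde v(x)-\tilde v(y)]^{p-1}\bigr|$ against $\zeta_n^p(x)-\zeta_n^p(y)$, whose support concentrates in a boundary strip $\{d<2/n\}$, and the natural H\"older estimate requires the Gagliardo seminorms of $u$ and $\tilde v$ over that strip. However Definition 1.1 only grants $u^\kappa,\tilde v^\kappa\in W_0^{s,p}(\Omega)$ for some $\kappa\ge1$. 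When $\kappa>1$ is genuinely needed (e.g.\ when $\alpha^\star<s-\tfrac1p$, cf.\ Corollary 1.3), $u$ itself is \emph{not} in $W_0^{s,p}(\Omega)$ and $[u]_{s,p}$ is infinite near $\partial\Omega$ --- exactly where $\zeta_n^p(x)-\zeta_n^p(y)$ lives. Passing from $|u^\kappa(x)-u^\kappa(y)|$ to $|u(x)-u(y)|$ via Proposition 1.1 requires a uniform lower bound on $\max\{u(x),u(y)\}$, which degenerates at the boundary, so this route is closed. Finally, your assertion that $\delta<1+s-\tfrac1p$ is ``precisely the threshold'' for $\mathrm{II}_{n,k}\to0$ is unsupported and, on its face, implausible: after you discard the nonpositive right-hand side, $\mathrm{II}_{n,k}$ does not involve $K_\delta$ or $\delta$ at all, so it is not clear how $\delta$ could enter such an estimate.
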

\noindent Next, we state the existence result:
\begin{thm}\label{esix-prob}
 Let $\Omega$ be a bounded domain with Lipschitz boundary $\partial \Omega$ and $ \delta\in (0, sp)$. Then,
\begin{enumerate}
\item for $\delta-s(1-\gamma)\leq 0$, then there exists a minimal weak solution $u \in W_0^{s,p}(\Omega)$ of the problem $(P)$; 
\item for $\delta-s(1-\gamma)> 0$, there exist  a minimal weak solution $u$ and a constant $\theta_0$ such that 
\begin{equation*}
u^\theta \in W_0^{s,p}(\Omega)\ \text{if}\ 
 \theta \geq \theta_0 \ \text{and} \ \theta_0 > \max\left\{1, \frac{p+\gamma-1}{p}, \Lambda \right\}
\end{equation*}
where $\Lambda:=\frac{(sp-1)(p-1+\gamma)}{p(sp-\delta)}$.
\end{enumerate} 
\end{thm}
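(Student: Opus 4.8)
The plan is to obtain $u$ as the pointwise increasing limit of the solutions $u_\epsilon$ of the approximated problems $(P_\epsilon^\gamma)$ furnished by Proposition~\ref{prelim}, the entire difficulty being to bound a suitable power of $u_\epsilon$ in $W_0^{s,p}(\Omega)$ uniformly in $\epsilon$. By Proposition~\ref{prelim} the family $\{u_\epsilon\}$ is monotone increasing as $\epsilon\downarrow 0$ and, on each $K\Subset\Omega$, bounded below by a positive constant, so it converges pointwise to some $u:\Omega\to[0,+\infty]$ with $\inf_K u>0$; the uniform energy estimates will then force $u$ to be finite with the stated integrability.

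\smallskip
\noindent\textbf{Step 1: $\epsilon$-uniform energy estimates.} Assume first $\delta-s(1-\gamma)\le 0$, which forces $\gamma<1$ since $\delta>0$. Testing \eqref{oiu} with $\phi=u_\epsilon$, using $K_{\epsilon,\delta}\le K_\delta\le\mathcal C_2 d^{-\delta}$ and $u_\epsilon(u_\epsilon+\epsilon)^{-\gamma}\le u_\epsilon^{1-\gamma}$, and factoring out the bounded quantity $d^{\,s(1-\gamma)-\delta}$, gives
\[
[u_\epsilon]_{s,p}^p\le C\int_\Omega\Big(\frac{u_\epsilon}{d^{s}}\Big)^{1-\gamma}dx .
\]
H\"older's inequality with exponents $\tfrac{p}{1-\gamma}$ and $\tfrac{p}{p-1+\gamma}$ followed by the fractional Hardy inequality on $W_0^{s,p}(\Omega)$ yields $[u_\epsilon]_{s,p}^{p}\le C[u_\epsilon]_{s,p}^{1-\gamma}$, hence $[u_\epsilon]_{s,p}^{p-1+\gamma}\le C$; here $\kappa=1$. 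Assume now $\delta-s(1-\gamma)>0$ and fix $\theta>\max\{1,\tfrac{p+\gamma-1}{p},\Lambda\}$, so that $a:=\theta p-(p-1+\gamma)>0$. For fixed $\epsilon$ the datum $g_\epsilon:=K_{\epsilon,\delta}(u_\epsilon+\epsilon)^{-\gamma}$ is bounded, so Proposition~\ref{prelim2} applies with the convex $C^1$ function $\Phi(t)=(t^+)^\theta$; testing the resulting inequality for $w:=u_\epsilon^\theta$ with $\phi=u_\epsilon^\theta\ge0$ and using $g_\epsilon\le\mathcal C_2 d^{-\delta}u_\epsilon^{-\gamma}$ gives
\[
[u_\epsilon^\theta]_{s,p}^p\le C\int_\Omega\frac{u_\epsilon^{a}}{d^{\delta}}\,dx .
\]
The crucial step is to write $d^{-\delta}u_\epsilon^{a}=\big(u_\epsilon^\theta/d^{s}\big)^{a/\theta}\,d^{-\delta_1}$ with $\delta_1:=\delta-\tfrac{as}{\theta}=\delta-sp+\tfrac{s(p-1+\gamma)}{\theta}$, apply H\"older with exponents $\tfrac{\theta p}{a}$ and $\tfrac{\theta p}{p-1+\gamma}$, and use fractional Hardy for the first factor, obtaining
\[
[u_\epsilon^\theta]_{s,p}^p\le C\,[u_\epsilon^\theta]_{s,p}^{a/\theta}\Big(\int_\Omega d^{-\frac{\delta_1\theta p}{\,p-1+\gamma\,}}\,dx\Big)^{\frac{p-1+\gamma}{\theta p}}.
\]
An elementary computation shows $\tfrac{\delta_1\theta p}{p-1+\gamma}<1$ is equivalent to $\theta>\Lambda$, so the last integral is finite; since $a/\theta<p$, we conclude $[u_\epsilon^\theta]_{s,p}^{\,p-a/\theta}\le C$ uniformly in $\epsilon$. (The conditions $\theta>\tfrac{p+\gamma-1}{p}$ and $\theta\ge1$ are needed exactly for $a>0$ and for $\Phi\in C^1$, $u_\epsilon^\theta\in W_0^{s,p}(\Omega)$, respectively.) Thus for any fixed $\theta_0>\max\{1,\tfrac{p+\gamma-1}{p},\Lambda\}$, the family $\{u_\epsilon^\theta\}$ is bounded in $W_0^{s,p}(\Omega)$ for every $\theta\ge\theta_0$.

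\smallskip
\noindent\textbf{Step 2: passage to the limit and minimality.} Set $\kappa=1$ in the first case and $\kappa=\theta_0$ in the second. Since $u_\epsilon^\kappa\to u^\kappa$ a.e.\ and $\{u_\epsilon^\kappa\}$ is bounded in $W_0^{s,p}(\Omega)$, we get $u_\epsilon^\kappa\rightharpoonup u^\kappa$ in $W_0^{s,p}(\Omega)$, hence $u^\kappa\in W_0^{s,p}(\Omega)$ (so $u\in W_0^{s,p}(\Omega)$ in the first case, and repeating the estimate with any $\theta\ge\theta_0$ gives $u^\theta\in W_0^{s,p}(\Omega)$ in the second), while $\inf_K u>0$ for $K\Subset\Omega$. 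To pass to the limit in \eqref{oiu} one takes $\phi\in\mathbb T$, say $\mathrm{supp}\,\phi\Subset\tilde\Omega\Subset\Omega$: on $\tilde\Omega$ one has $u_\epsilon\ge\sigma>0$ and $K_{\epsilon,\delta}$ bounded, so $K_{\epsilon,\delta}(u_\epsilon+\epsilon)^{-\gamma}\phi\to K_\delta u^{-\gamma}\phi$ in $L^1$ by dominated convergence, and the left-hand side converges because $u_\epsilon\to u$ strongly in $W^{s,p}_{loc}(\Omega)$ — this uses the uniform estimates, the monotonicity, uniform interior $L^\infty$ bounds, and the $(S_+)$-property of $(-\Delta)^s_p$ — so one may pass to the limit in the nonlocal term by Vitali's theorem. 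Hence $u$ is a weak solution of $(P)$ in the sense of Definition~\ref{def1}. Finally, any weak solution $v$ of $(P)$ is, since $K_\delta\ge K_{\epsilon,\delta}$ and $v^{-\gamma}\ge(v+\epsilon)^{-\gamma}$, a supersolution of $(P_\epsilon^\gamma)$, whose nonlinearity $t\mapsto K_{\epsilon,\delta}(t+\epsilon)^{-\gamma}$ is bounded and non-increasing; the weak comparison principle for $(P_\epsilon^\gamma)$ then gives $u_\epsilon\le v$ for every $\epsilon$, whence $u\le v$, i.e.\ $u$ is the minimal weak solution.

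\smallskip
\noindent\textbf{Main obstacle.} The heart of the matter is the $\epsilon$-uniform bound when $\delta-s(1-\gamma)>0$: the singular weight $d^{-\delta}$ may fail to be integrable ($\delta\ge1$) and is in any case too strong for the naive test function, so one must tune the power $\theta$ and the H\"older–Hardy interpolation so that $d^{-\delta}$ is absorbed while the exponent of $[u_\epsilon^\theta]_{s,p}$ produced on the right-hand side stays strictly below $p$; tracking the admissibility of all exponents is precisely what yields the threshold $\theta_0>\max\{1,\tfrac{p+\gamma-1}{p},\Lambda\}$. A secondary technical point is upgrading the weak convergence of $u_\epsilon$ to the strong local convergence needed to pass to the limit in the nonlinear fractional operator.
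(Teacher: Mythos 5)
Your proof is correct and follows essentially the same route as the paper's: you obtain the same $\epsilon$-uniform H\"older--Hardy estimate on $[u_\epsilon]_{s,p}$ when $\delta-s(1-\gamma)\le 0$ and on $[u_\epsilon^\theta]_{s,p}$ with $\theta>\max\{1,\tfrac{p+\gamma-1}{p},\Lambda\}$ otherwise (the constraint $\theta>\Lambda$ entering exactly to make the boundary weight $d^{-\delta_1\theta p/(p-1+\gamma)}$ integrable), you pass to the limit in \eqref{oiu} over test functions in $\mathbb T$ using the interior lower bound $u_\epsilon\ge\eta_{\tilde\Omega}$, and you get minimality by viewing any weak solution of $(P)$ as a supersolution of $(P_\epsilon^\gamma)$. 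The only point where you and the paper both leave details implicit is the convergence of the nonlocal term on the left-hand side of \eqref{oiu}: the paper delegates this to the end of the proofs of Theorems~3.2 and~3.6 in \cite{CMB}, while you sketch a strong-local-convergence/$(S_+)$/Vitali argument — both are the same standard step.
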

\noindent As a consequence of comparison principle, we have the following uniqueness and nonexistence result:
\begin{cor}\label{cor:uniq}
For $0< \delta < 1+s-\frac{1}{p}$, the minimal weak solution $u$ is the unique weak solution of the problem $(P)$. 
\end{cor}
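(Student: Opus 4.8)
The plan is to combine the existence result (Theorem \ref{esix-prob}) with the comparison principle (Theorem \ref{WCP}). First I would observe that for $0<\delta<1+s-\frac{1}{p}$ we have in particular $\delta<1+s-\frac{1}{p}<sp$ (since $sp>1$ is forced by $\delta\geq 0$ and $1+s-\frac1p>0$ only making sense in the regime $sp>1$; more precisely $1+s-\frac1p\le sp\iff (p-1)(s-1)\le 0$, which always holds), so Theorem \ref{esix-prob} indeed produces a minimal weak solution $u$ of $(P)$ in the sense of Definition \ref{def1}. By construction $u$ is in particular both a subsolution and a supersolution of $(P)$.

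Next, let $w$ be an arbitrary weak solution of $(P)$. Then $w$ is a supersolution, and $u$ is a subsolution, so Theorem \ref{WCP} (applicable because $0\le\delta<1+s-\frac1p$ and $\gamma\ge 0$) yields $u\le w$ a.e. in $\Omega$. Conversely, $w$ is a subsolution and $u$ is a supersolution, so the same theorem gives $w\le u$ a.e. in $\Omega$. Combining the two inequalities gives $w=u$ a.e. in $\Omega$, which is exactly the assertion that the minimal weak solution is the unique weak solution.

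The only subtlety to check — and the one place that requires a word of care rather than being purely formal — is that the hypotheses of Theorem \ref{WCP} are genuinely met for \emph{any} weak solution $w$, i.e. that an arbitrary weak solution automatically satisfies $w^\kappa\in W^{s,p}_0(\Omega)$ for some $\kappa\ge 1$ and the local positivity $\inf_K w>0$ for all $K\Subset\Omega$; but this is built into Definition \ref{def1} of a weak solution (a weak solution is by definition both a sub- and a supersolution, each of which carries these requirements), so no extra argument is needed. I would also remark that the strict inequality $\delta>0$ in the statement is not used in the uniqueness argument itself — it is inherited from Theorem \ref{esix-prob}, whose part (1)/(2) split is phrased for $\delta\in(0,sp)$ — and that the conclusion is stated only up to sets of measure zero, consistent with the notion of weak solution employed throughout. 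There is no real obstacle here: the corollary is a direct two-line consequence of the two main theorems once one unwinds the definitions.
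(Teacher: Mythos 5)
Your argument is correct and follows exactly the paper's proof: apply Theorem \ref{WCP} twice, exchanging the roles of sub- and supersolution, to deduce that any two weak solutions coincide a.e. The paper phrases it with two arbitrary solutions $u_1,u_2$ rather than pairing the minimal solution against an arbitrary one, but this is an immaterial difference.
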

\begin{thm}\label{thm:non-exis}
Let $\delta \geq sp.$ Then there does not exist any weak solution of the problem $(P)$ in the sense of definition \ref{def1}. 
\end{thm}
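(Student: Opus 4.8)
\textit{Proof strategy (sketch).} The plan is to argue by contradiction, exploiting the quantitative boundary behaviour of the approximating family $\{u_\epsilon\}$ from Proposition \ref{prelim}. Suppose $u$ is a weak solution of $(P)$ in the sense of Definition \ref{def1}. Since $u$ is in particular a weak supersolution and, for $\gamma\ge 0$ and $u>0$, one has $K_\delta(x)u^{-\gamma}\ge K_{\epsilon,\delta}(x)(u+\epsilon)^{-\gamma}$ (because $K_{\epsilon,\delta}\le K_\delta$ and $u^{-\gamma}\ge(u+\epsilon)^{-\gamma}$), the function $u$ is a supersolution of each approximated problem $(P_\epsilon^\gamma)$. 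As $(P_\epsilon^\gamma)$ is non-singular with non-increasing right-hand side in the solution variable, the comparison principle for $(P_\epsilon^\gamma)$ (Proposition \ref{prelim}, \cite{CMB, IMS}) yields $u_\epsilon\le u$ a.e. in $\Omega$ for every $\epsilon\in(0,1)$. Hence $u\ge\lim_{\epsilon\to 0^+}u_\epsilon$, so that every lower bound for the $u_\epsilon$ that is uniform in $\epsilon$ passes to $u$. (A minor point: $u$ is tested only against $\phi\in\mathbb T$ whereas $u_\epsilon\in W_0^{s,p}(\Omega)$; this is handled by the usual truncation and density arguments, using $\inf_K u>0$ on $K\Subset\Omega$ and $u^\kappa\in W_0^{s,p}(\Omega)$, exactly as in the approximating scheme of \cite{CMB}.)

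Next I would invoke the barrier analysis developed earlier: the explicit sub/supersolutions built from the distance function and the prototype barrier give, for each $\epsilon$, lower estimates for $u_\epsilon$ near $\partial\Omega$ whose natural profile is $d(x)^{\beta}$ with $\beta=\frac{sp-\delta}{p-1+\gamma}$, the exponent balancing $(-\Delta)^s_p d^{\beta}\sim d^{\beta(p-1)-sp}$ against $K_\delta\, d^{-\beta\gamma}\sim d^{-\delta-\beta\gamma}$. This is precisely where $\delta\ge sp$ enters: then $\beta\le 0$, the profile $d^{\beta}$ does not vanish at $\partial\Omega$, and, letting $\epsilon\to 0^+$ in the barrier bound, $u(x)\ge c\,d(x)^{\beta}$ for $x$ near $\partial\Omega$ and some $c>0$. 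This contradicts the admissibility requirements of Definition \ref{def1}. Indeed, $u^\kappa\in W_0^{s,p}(\Omega)$ for some $\kappa\ge 1$ gives, by the fractional Hardy inequality (for $sp\neq1$), $\int_\Omega u^{\kappa p}\,d^{-sp}\,dx<\infty$; when $sp\ge1$ the bound $u\ge c\,d^{\beta}\ge c>0$ near $\partial\Omega$ already forces $\int_\Omega d^{-sp}\,dx=+\infty$, a contradiction; for $\delta$ sufficiently large the forced lower bound even makes $u=+\infty$ on a set of positive measure, contradicting $u\in W_{loc}^{s,p}(\Omega)$. To cover the remaining range (in particular $\delta$ close to $sp$, where $sp$ may be $<1$), one complements this with the ``integrated identity'' obtained by testing the weak formulation against an increasing sequence $\phi_n\uparrow\psi$ in $\mathbb T$ (with $\psi\equiv1$, or a function decaying slowly at $\partial\Omega$) and letting $n\to\infty$: using $u=0$ in $\mathbb{R}^N\setminus\Omega$ this gives, up to the interior part, a relation of the form $\int_\Omega K_\delta\, u^{-\gamma}\psi\,dx = 2\int_\Omega u(x)^{p-1}\psi(x)\big(\int_{\mathbb{R}^N\setminus\Omega}|x-y|^{-N-sp}\,dy\big)\,dx$, and a Hölder/Hardy interpolation between $\int_\Omega d^{-\delta}u^{-\gamma}$, $\int_\Omega u^{\kappa p}d^{-sp}$ and the lower barrier $u\ge c\,d^{\beta}$ forces $\int_\Omega d^{-m}\,dx<\infty$ for some $m\ge1$ whenever $\delta\ge sp$, which is absurd.

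The hard part is the barrier step, i.e.\ producing the lower estimates for $u_\epsilon$ near $\partial\Omega$ \emph{uniformly in $\epsilon$}: this requires the sharp upper/lower computation of $(-\Delta)^s_p$ on the prototype barrier in $\mathbb{R}^N_+$, its transplantation to $\Omega$ via the $C^{1,1}$ flattening of $\partial\Omega$ and control of the nonlocal tail contributions, and — crucially — tracking the dependence of all constants on $\epsilon$ so that the limit $\epsilon\to0^+$ yields a genuinely non-decaying (or blowing-up) lower bound exactly when $\delta\ge sp$. The borderline $\delta=sp$ (where $\beta=0$ and logarithmic corrections enter) and the regime $\delta$ slightly above $sp$ are the most delicate; there one uses that the failure of $\{u_\epsilon\}$, resp.\ $\{u_\epsilon^\kappa\}$, to remain bounded in $W_0^{s,p}(\Omega)$ is governed precisely by the sign of $\delta-sp$, so that no weak solution can dominate the whole family. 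The remaining ingredients — the comparison $u_\epsilon\le u$, the limit passage in the integrated identity, and the Hölder/Hardy bookkeeping — are routine once the barrier estimates are available.
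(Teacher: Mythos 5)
There is a genuine gap. Your plan is to compare $u$ with the approximating sequence $u_\epsilon$ of $(P_\epsilon^\gamma)$ for the same $\delta\ge sp$, and then invoke the barrier machinery with the ``natural'' profile exponent $\beta=\frac{sp-\delta}{p-1+\gamma}\le0$. Neither step goes through. First, for $\delta\ge sp$ the regularized weight $K_{\epsilon,\delta}(x)=(K_\delta^{-1/\delta}(x)+\epsilon^{\frac{\gamma+p-1}{sp-\delta}})^{-\delta}$ is either undefined ($\delta=sp$) or degenerates ($\delta>sp$, since $\epsilon^{\frac{\gamma+p-1}{sp-\delta}}\to\infty$ as $\epsilon\to0^+$, driving $K_{\epsilon,\delta}\to0$), so the approximating family you want to compare against does not exist in the form you need. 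Second, the barrier estimates (Theorem \ref{thest}, Corollary \ref{esthm}, Theorems \ref{thpr}, \ref{thq}) are established only for the prototype $U_\lambda(t)=((t+\lambda^{1/\alpha})^+)^\alpha$ with $\alpha\in(0,s)$; they provide no lower bound at all for a profile with nonpositive exponent. You identify this as the ``hard part'' but offer no concrete mechanism to get the uniform-in-$\epsilon$, non-decaying lower bound for $\delta\ge sp$, and the subsequent ``integrated identity'' is too vague to carry the argument.

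The missing idea is the paper's auxiliary comparison weight: one does \emph{not} work with $\delta$ directly, but chooses $\delta_0<sp$ and $\Gamma\in(0,1)$ with $\Gamma K_{\delta_0}\le K_\delta$, and replaces the right-hand side by $\Gamma K_{\epsilon,\delta_0}/(u_\epsilon+\epsilon)^\gamma$. The approximating problem is then well-posed and the barrier estimates apply with the \emph{positive} exponent $\alpha_0^\star=\frac{sp-\delta_0}{\gamma+p-1}$, so one gets $u_\epsilon\ge \eta c_\eta\big((d(x)+\epsilon^{1/\alpha_0^\star})^{\alpha_0^\star}-\epsilon\big)$. A direct test-function comparison with $w^+=(u_\epsilon-u_0-\theta)^+$ (compactly supported, hence an admissible test function in $\mathbb{T}$, and using the monotonicity identity for the fractional $p$-Laplacian together with the nonincreasing right-hand side) gives $u_\epsilon\le u_0$, so the lower barrier transfers to the putative solution $u_0$. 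The contradiction is not that $u_0$ is bounded below by a positive constant at the boundary; rather, one plugs $u_0\ge c\,d^{\alpha_0^\star}$ into the Hardy inequality for $u_0^{\kappa_0}\in W_0^{s,p}(\Omega)$, obtaining $\int_\Omega d^{(\kappa_0\alpha_0^\star-s)p}\,dx<\infty$, and then lets $\delta_0\to sp^-$ (so $\alpha_0^\star\to0^+$), which forces the exponent below $-1$ and makes the integral diverge. That limiting argument in $\delta_0$, rather than pushing the barrier past $\delta=sp$, is the crux, and it is absent from your sketch.
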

\noindent Now, we state the H\"older and optimal Sobolev regularity results where we use the following exponent
$$\alpha^\star:=\frac{sp-\delta}{\gamma+p-1}.$$
\begin{thm}\label{thm:regularity}
Let $\Omega$ be a bounded domain with $C^{1,1}$ boundary and $u$ be the minimal weak solution of $(P)$. Then there exist constant $C_1, C_2>0$ and $\omega_1\in (0,s)$, $ \omega_2\in (0,\alpha^\star]$  such that
\begin{enumerate}
\item if $\delta-s(1-\gamma)\leq 0$, then $C_1 d^s \leq u \leq C_2 d^{s-\epsilon}$ in $\Omega$ and for every $\epsilon>0$ \begin{equation*} 
 \begin{aligned}
 u \in  \left\{
 \begin{array}{ll}
  C^{s-\epsilon}(\mathbb{R}^N)  & \ \text{if} \ \ 2 \leq p < \infty, \\
 C^{\omega_1}(\mathbb{R}^N)  &  \text{ if } \ 1< p < 2.\\
 \end{array} 
 \right.
 \end{aligned}
\end{equation*}
\item if $\delta-s(1-\gamma)> 0$ then $C_1  d^{\alpha^\star} \leq u \leq C_2 d^{\alpha^\star}$ in $\Omega$ and  
\begin{equation*} 
 \begin{aligned}
 u \in  \left\{
 \begin{array}{ll}
  C^{\alpha^\star}(\mathbb{R}^N)  & \ \text{if} \ \ 2 \leq p < \infty, \\
 C^{\omega_2}(\mathbb{R}^N)  &  \text{ if } \ 1< p < 2.\\
 \end{array} 
 \right.
 \end{aligned}
\end{equation*}
\end{enumerate}
\end{thm}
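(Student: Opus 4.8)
\textbf{Proof proposal for Theorem \ref{thm:regularity}.}

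The plan is to transfer regularity from the approximating sequence $\{u_\epsilon\}$ to the minimal weak solution $u=\lim_{\epsilon\to 0}u_\epsilon$, the two main ingredients being (i) precise two-sided bounds on $u_\epsilon$ in terms of $d(x)+\epsilon^{(\gamma+p-1)/(sp-\delta)}$ obtained by the barrier method announced in the introduction, and (ii) the boundary H\"older estimate of Iannizzotto--Mosconi--Squassina (\cite{IMS}, used already in Proposition \ref{prelim}) together with the interior estimates of \cite{BLS} for $p\ge 2$. First I would construct, using the $C^{1,1}$ regularity of $\partial\Omega$ and the local flattening/deformation argument, a global subsolution and supersolution of $(P_\epsilon^\gamma)$ of the form $\underline C\,(d+\epsilon^{\beta})^{a}$ and $\overline C\,(d+\epsilon^{\beta})^{a'}$ with $\beta=\frac{\gamma+p-1}{sp-\delta}$, where the exponents are dictated by matching the homogeneity of $(-\Delta)^s_p$ (which lowers the power exponent by $sp/(p-1)$ up to the relevant scaling) against the right-hand side $K_{\epsilon,\delta}(x)(u+\epsilon)^{-\gamma}\sim (d+\epsilon^{\beta})^{-\delta}(u+\epsilon)^{-\gamma}$. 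Plugging $u=c(d+\epsilon^\beta)^a$ into this balance gives $(p-1)a-sp$ (interior contribution) $=-\delta-\gamma a$, i.e. $a(\gamma+p-1)=sp-\delta$, so $a=\alpha^\star$; in the regime $\delta-s(1-\gamma)\le 0$ one has $\alpha^\star\ge s$, the distance function is capped by the ``torsion-type'' barrier behaving like $d^s$, and the exponent $s$ (up to $\epsilon$-loss) takes over, whereas for $\delta-s(1-\gamma)>0$ the singular forcing dominates and the genuine power is $d^{\alpha^\star}$. By the comparison principle for $(-\Delta)^s_p$ with bounded data (Theorem 1.1 of \cite{IMS} / weak comparison), this pins down $u_\epsilon$ between these barriers uniformly in $\epsilon$, and letting $\epsilon\to 0$ (using the monotone convergence and the lower bound $u\ge u_1\ge\sigma$ on compacts from Proposition \ref{prelim}) yields $C_1 d^{\alpha^\star}\le u\le C_2 d^{\alpha^\star}$ in case (2), and $C_1 d^s\le u\le C_2 d^{s-\epsilon}$ in case (1).

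Next I would upgrade these pointwise bounds to H\"older regularity. In case (2), since $u\sim d^{\alpha^\star}$ with $\alpha^\star\le s<1$, the right-hand side $f_\epsilon:=K_{\epsilon,\delta}(x)(u_\epsilon+\epsilon)^{-\gamma}$ is controlled by $C(d+\epsilon^\beta)^{-\delta-\gamma\alpha^\star}=C(d+\epsilon^\beta)^{\alpha^\star(p-1)-sp}$ (using $\delta+\gamma\alpha^\star=sp-\alpha^\star(p-1)$), which after the substitution $v_\epsilon=(u_\epsilon+\epsilon)$ is exactly the right growth so that the fine boundary barrier of \cite{IMS} — applied to $(-\Delta)^s_p$ with a weight blowing up like $d^{-sp/(p-1)\cdot(\text{scaled})}$ — forces $v_\epsilon\in C^{\alpha^\star}$ with estimates independent of $\epsilon$; for $p\ge 2$ the interior $C^{sp/(p-1)}_{loc}$ estimate of \cite{BLS} combined with the boundary bound gives the global $C^{\alpha^\star}(\mathbb R^N)$ claim (note $\alpha^\star\le sp/(p-1)$), and for $1<p<2$ only the (non-sharp) local estimate of \cite{Castro, Lind} is available, producing some exponent $\omega_2\in(0,\alpha^\star]$. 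In case (1) the same scheme applies with the role of $\alpha^\star$ replaced by $s-\epsilon$: the forcing is bounded near the boundary (or mildly singular), so the \cite{IMS}/\cite{Ros} boundary regularity gives $C^{s-\epsilon}$ for $p\ge 2$ and $C^{\omega_1}$, $\omega_1\in(0,s)$, for $1<p<2$. In both cases the uniform-in-$\epsilon$ estimates pass to the limit by Arzel\`a--Ascoli.

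The main obstacle I expect is step (i): constructing the barriers and verifying the differential inequalities $(-\Delta)^s_p(\text{barrier})\lessgtr K_{\epsilon,\delta}(x)(\text{barrier}+\epsilon)^{-\gamma}$ globally in $\Omega$. Unlike the case $p=2$, there is no Green-function representation, so one must compute $(-\Delta)^s_p$ of the one-dimensional prototype $t\mapsto (t_+)^{a}$ (and its $\mathbb R^N_+$ analogue), obtaining matching upper and lower bounds of order $(t)^{a-sp/(p-1)}$ — or rather, because of the $\epsilon$-regularization, of order $(t+\epsilon^\beta)^{\ldots}$ — which requires delicate splitting of the singular integral into near-diagonal, tail, and far-field pieces and careful tracking of the constants' dependence on $a$ and on $\dist$. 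The transfer from the half-space model to the curved domain via the $C^{1,1}$ flattening introduces lower-order error terms that must be absorbed; this is where the hypothesis $\partial\Omega\in C^{1,1}$ (rather than merely Lipschitz, as in Theorem \ref{esix-prob}) is essential. Once the barriers are in place, everything else is a matter of invoking the cited regularity theorems and a standard limiting argument.
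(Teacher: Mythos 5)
Your overall strategy --- barrier functions of the form $(d+\lambda^{1/\alpha})^\alpha$, comparison against the approximating sequence, then Hölder regularity from the \cite{BLS}/\cite{IMS} estimates combined with the pointwise boundary behaviour --- matches the paper's, and your dimensional computation correctly identifies $a=\alpha^\star$. However there are two substantive deviations that would need repair.

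First, you flag as the ``main obstacle'' the verification of the barrier inequalities \emph{globally} in $\Omega$, and your plan treats $\underline C(d+\epsilon^\beta)^a$, $\overline C(d+\epsilon^\beta)^{a'}$ as global sub/supersolutions of $(P^\gamma_\epsilon)$. That is indeed a dead end: there is no reason the barrier differential inequalities should hold away from $\partial\Omega$, and the paper never claims they do. Instead, the paper establishes the barrier inequalities only E-weakly in a boundary strip $\Omega_\eta$ (Theorem \ref{thq}), and complements this by an $\epsilon$-uniform $L^\infty$ bound for $u_\epsilon$ on $\Omega\setminus\Omega_{\eta/2}$ obtained from the local boundedness estimate of Brasco--Lindgren--Schikorra (see \eqref{est:loc-bdd}--\eqref{kapeta} and \eqref{est:sing:loc-bdd}--\eqref{kapeta1}), together with the eigenfunction lower bound \eqref{22}/\eqref{eigen:bdd}; the ordering on $\Omega\setminus\Omega_{\eta/2}$ and on $\Omega^c$ is then arranged by hand via the constants $c_\eta,\dot c_\eta$, and only afterwards is Proposition 2.10 of \cite{IMS} invoked \emph{in $\Omega_\eta$}. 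Without this localization plus exterior-of-$\Omega_\eta$ matching, the weak comparison principle cannot be applied and the barriers do not sandwich $u_\epsilon$.

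Second, Case 1 is handled quite differently in the paper. You observe $\alpha^\star\ge s$ and appeal to a ``torsion-type'' barrier $\sim d^s$, but the explicit barriers of Section \ref{distance} require $\alpha\in(0,s)$ (this is essential in Theorem \ref{thest}), so $\alpha=s$ or $\alpha=\alpha^\star$ is unavailable, and replacing $\alpha$ by $s-\epsilon$ forces one to re-balance the nonlinear right-hand side with a mismatched offset $\lambda$, which your sketch does not carry out. The paper avoids this entirely: it takes the weak solutions $\tilde u,\dot u$ of the \emph{pure weight} problems $(S_0^\zeta)$ with shifted exponents $\zeta=\delta+\gamma s$ and $\zeta=\delta+\gamma(s-\epsilon)$, transfers their two-sided $d$-bounds from Theorem \ref{bdybeh}(ii), checks that suitable constant multiples $C_*\dot u$ and $C^*\tilde u$ are sub/supersolutions of $(P)$, and concludes by the comparison principle Theorem \ref{WCP}. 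This bootstrapping through $(S_0^\zeta)$ is a genuinely different (and cleaner) route for Case 1, and you would need to either adopt it or fill in the $\alpha=s-\epsilon$ barrier computation explicitly, including the lower bound (which in the paper comes not from a barrier but from comparison with the first eigenfunction $\varphi_{s,p}\sim d^s$). Finally, a small point: the boundary Hölder estimate needs the dyadic split $|x-y|\lessgtr d(x)/64$ (interior estimate on balls $B_{d(x)/2}$ versus the pointwise bounds $|u(x)|\lesssim d^{\alpha^\star}(x)$), which you should spell out, and the comparison principle you want from \cite{IMS} is Proposition 2.10, not Theorem 1.1.
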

\begin{cor}\label{cor:bdry+sobreg}
For $\delta-s(1-\gamma)>0$ and $\Omega$ be a bounded domain with $C^{1,1}$ boundary. Then the minimal weak solution $u$ of the problem $(P)$ has the optimal Sobolev regularity: $$u \in W_0^{s,p}(\Omega)\ \text{ if and only if}\ \ \  \Lambda <1$$ and  $$ u^\theta \in  W_0^{s,p}(\Omega) \ \text{if and only if}\ \   \theta>\Lambda >1.$$
\end{cor}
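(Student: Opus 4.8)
The plan is to obtain Corollary~\ref{cor:bdry+sobreg} directly from the two-sided boundary estimate of Theorem~\ref{thm:regularity}(2). Under the standing hypothesis $\delta-s(1-\gamma)>0$ that theorem provides $C_1 d^{\alpha^\star}\le u\le C_2 d^{\alpha^\star}$ in $\Omega$, hence $C_1^\theta d^{\theta\alpha^\star}\le u^\theta\le C_2^\theta d^{\theta\alpha^\star}$ in $\Omega$ for every $\theta>0$. I would therefore reduce the claim to the elementary fact that, for a bounded $C^{1,1}$ domain, $\int_\Omega d^{\tau}\,dx<\infty$ if and only if $\tau>-1$, together with the statement that a function which is comparable to $d^{\theta\alpha^\star}$ \emph{and} is the increasing limit of the approximations $u_\epsilon$ lies in $W_0^{s,p}(\Omega)$ precisely when $\theta\alpha^\star p>sp-1$, i.e. $\theta>\Lambda$. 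If $sp\le1$ then $\Lambda\le0$, so ``$\Lambda<1$'' holds automatically and ``$\theta>\Lambda>1$'' is vacuous; I may thus assume $sp>1$, in which case the fractional Hardy inequality holds on $\Omega$: there is $C_H>0$ with $\int_\Omega |v|^p d^{-sp}\,dx\le C_H[v]_{s,p}^p$ for all $v\in W_0^{s,p}(\Omega)$.

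For the necessity I would assume $u^\theta\in W_0^{s,p}(\Omega)$ and apply the Hardy inequality to $v=u^\theta$; using $u^{\theta p}\ge C_1^{\theta p}d^{\theta\alpha^\star p}$ this gives $\int_\Omega d^{\theta\alpha^\star p-sp}\,dx<\infty$, so $\theta\alpha^\star p-sp>-1$, i.e. $\theta>\Lambda$. Taking $\theta=1$ yields $u\in W_0^{s,p}(\Omega)\Rightarrow\Lambda<1$, and in the regime $\Lambda>1$ this already gives one implication of the second equivalence.

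For the sufficiency I would work with the approximating solutions $u_\epsilon$. Fix $\theta\ge1$ with $\theta>\Lambda$; this covers $\theta=1$ under $\Lambda<1$ and any $\theta>\Lambda>1$. For fixed $\epsilon\in(0,1)$ the datum $g_\epsilon:=K_{\epsilon,\delta}(u_\epsilon+\epsilon)^{-\gamma}$ of $(P_\epsilon^\gamma)$ lies in $L^\infty(\Omega)$ and $u_\epsilon\in W_0^{s,p}(\Omega)\cap L^\infty(\Omega)$, so Proposition~\ref{prelim2} applied with the convex $C^1$ map $\Phi(t)=t^\theta$ shows that $w_\epsilon:=u_\epsilon^\theta\in W_0^{s,p}(\Omega)$ satisfies the corresponding inequality with datum $\theta^{p-1}g_\epsilon u_\epsilon^{(\theta-1)(p-1)}$. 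Testing it with $\phi=u_\epsilon^\theta\ge0$ and using $(\theta-1)(p-1)+\theta=\theta p-p+1$ gives
\begin{equation*}
[u_\epsilon^\theta]_{s,p}^p\le\theta^{p-1}\int_\Omega K_{\epsilon,\delta}\,(u_\epsilon+\epsilon)^{-\gamma}\,u_\epsilon^{\,\theta p-p+1}\,dx .
\end{equation*}
Since $\theta p-p+1=p(\theta-1)+1>0$, I would bound the integrand by $C\,d^{\alpha^\star(\theta p-p+1-\gamma)}$ using $K_{\epsilon,\delta}\le C d^{-\delta}$ together with the uniform-in-$\epsilon$ estimates $u_\epsilon\le C_2 d^{\alpha^\star}$ (when $\theta p-p+1-\gamma\ge0$, writing $(u_\epsilon+\epsilon)^{-\gamma}u_\epsilon^{\theta p-p+1}\le u_\epsilon^{\theta p-p+1-\gamma}$) and $u_\epsilon+\epsilon\ge c_0 d^{\alpha^\star}$ (when $\theta p-p+1-\gamma<0$, first replacing $u_\epsilon^{\theta p-p+1}$ by $(u_\epsilon+\epsilon)^{\theta p-p+1}$), both coming from the barriers used to prove Theorem~\ref{thm:regularity}. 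Since $\alpha^\star(\theta p-p+1-\gamma)-\delta=\alpha^\star p\theta-sp$, this makes the right-hand side $C\int_\Omega d^{\alpha^\star p\theta-sp}\,dx$, which is finite exactly when $\alpha^\star p\theta-sp>-1$, i.e. $\theta>\Lambda$. Hence $[u_\epsilon^\theta]_{s,p}$ is bounded uniformly in $\epsilon$; as $u_\epsilon^\theta\to u^\theta$ a.e. (the $u_\epsilon$ increase, by Proposition~\ref{prelim}) with $u^\theta\in L^p(\Omega)$ vanishing outside $\Omega$, Fatou's lemma applied to the Gagliardo kernel gives $u^\theta\in W_0^{s,p}(\Omega)$; $\theta=1$ gives $u\in W_0^{s,p}(\Omega)$ when $\Lambda<1$.

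The step I expect to be the main obstacle is the sufficiency estimate in the range where $\theta p-p+1-\gamma<0$ (possible only when $\gamma>1$, and then for $\Lambda<\theta<\frac{p+\gamma-1}{p}$, a range that is nonempty when $\delta<1$): there the upper bound on $u_\epsilon$ is useless and the argument genuinely rests on the \emph{uniform} lower bound $u_\epsilon+\epsilon\gtrsim d^{\alpha^\star}$ --- exactly the information encoded by the shift $\epsilon^{(\gamma+p-1)/(sp-\delta)}=\epsilon^{1/\alpha^\star}$ in $K_{\epsilon,\delta}$ and supplied by the barrier/deformation arguments behind Theorem~\ref{thm:regularity}. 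The remaining pieces --- the fractional Hardy inequality, Proposition~\ref{prelim2}, the monotone passage to the limit, and checking that the exponent thresholds coincide with $\theta=\Lambda$ --- are routine.
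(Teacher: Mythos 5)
Your proposal is correct and follows essentially the same route as the paper: use the two-sided barrier estimates $u_\epsilon+\epsilon \asymp d^{\alpha^\star}$ from the proof of Theorem~\ref{thm:regularity}, apply Proposition~\ref{prelim2} with $\Phi(t)=t^\theta$ and test with $u_\epsilon^\theta$ to reduce the sufficiency to the finiteness of $\int_\Omega d^{\theta p\alpha^\star-sp}\,dx$, and use the fractional Hardy inequality together with the lower bound $u\ge C_1 d^{\alpha^\star}$ for the necessity. Your version is slightly more careful than the paper's in two places: you observe that the case $sp\le1$ (hence $\Lambda\le0$) is trivial, so Hardy may be invoked only when $sp>1$; and, more substantially, you correctly flag that when $\gamma>1$, $\delta<1$, and $\Lambda<\theta<\frac{p+\gamma-1}{p}$, the exponent $(\theta-1)(p-1)+\theta-\gamma$ is negative, so one must keep $(u_\epsilon+\epsilon)^{-\gamma}u_\epsilon^{(\theta-1)(p-1)+\theta}\le(u_\epsilon+\epsilon)^{(\theta-1)(p-1)+\theta-\gamma}$ and invoke the uniform lower bound $u_\epsilon+\epsilon\gtrsim d^{\alpha^\star}$ rather than the (non-uniform) lower bound on $u_\epsilon$ alone. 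The paper passes silently from $(u_\epsilon+\epsilon)^{-\gamma}$ to $u_\epsilon^{-\gamma}$ and then bounds $u_\epsilon^{(\theta-1)(p-1)+\theta-\gamma}$ by $C d^{\alpha^\star((\theta-1)(p-1)+\theta-\gamma)}$, which implicitly uses $u_\epsilon\gtrsim d^{\alpha^\star}$ uniformly in $\epsilon$; that is only available for $u_\epsilon+\epsilon$, and your rearrangement is the clean way to make the estimate airtight. The monotone limit passage via Fatou's lemma that you use at the end is also what the paper implicitly relies on.
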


\begin{remark}
In case of $\delta=0$ and $\gamma>0$, we extend the Sobolev regularity of minimal weak solution as compared to the Sobolev regularity in Theorem  $3.6$ in \cite{CMB}. Precisely, $u \in W_0^{s,p}(\Omega)$ when $\gamma \leq 1$ or $\gamma>1$ and $\Lambda <1$, and $u^\rho \in W_0^{s,p}(\Omega)$ for $\rho > \Lambda$ when $\gamma >1$ and $\Lambda \geq 1$.
\end{remark}

\subsection{Layout of the paper}
In section \ref{exis-comp}, we prove our weak comparison principle (Theorem \ref{WCP}) and existence result (Theorem \ref{esix-prob}). The main ingredients of the proof of comparison principle are by Lemma $4.1$, Theorem $4.2$ in \cite{CMB} and weak lower semi continuity of the perturbed energy functional near singularity. Using the monotonicity of the approximated solution $u_\epsilon$ of the problem $(P_\epsilon^\gamma)$ obtained from Proposition \ref{prelim}, we derive uniform estimate of $u_\epsilon^\kappa$ in $W_0^{s,p}(\Omega)$ for some $\kappa \geq 1$ and finally, by exploiting the Hardy inequality, we prove the convergence of $u_\epsilon$ to a minimal weak solution $u$.  \\
The hard and tricky work is carried out in section \ref{distance}. Using the arguments of barrier functions, we construct sub and  supersolution of the purely singular weight problem near the boundary. For this, we introduce the new energy space and notion of weak energy solution, subsolution and supersolution. We start by computing the $p$-fractional Laplacian of $V_\lambda(x):= (x_N+ \lambda^{\frac{1}{\alpha}})_+^\alpha$ in $\mathbb{R}^N$ and derive lower and upper estimates of $(-\Delta)^s_p V_\lambda$ in $\mathbb{R}^N_+$ (Theorem \ref{thest}, Corollary \ref{esthm}). Next, we prove the above upper and lower estimates are preserved under a smooth change of variables via a $C^{1,1}$ diffeomorphism close to identity in the isomorphic image of a set close to the boundary of $\mathbb{R}^N_+$ (Theorem \ref{thpr}). Then, by smoothly extending the distance function outside $\Omega$ (see \eqref{ext:def}), we construct sub and supersolution $\underline{w}_\rho$ and $\overline{w}_\rho$ for some $\rho >0$ (see \eqref{sub}, \eqref{super} satisfying \eqref{mainest}). \\
In section \ref{Sob-Hol}, we study the purely singular weight problem $(S_0^\delta)$ and we prove the existence of minimal weal solution (Theorem \ref{weithm}) and boundary behavior of the minimal weak solution (Theorem \ref{bdybeh}). As a n application of Theorem \ref{bdybeh}, we prove the boundary behavior, Optimal Sobolev and  H\"older regularity of the minimal weak solution to our main problem $(P)$ (Theorem \ref{thm:regularity} and Corollary \ref{cor:bdry+sobreg}).  \\
In section \ref{non-es}, we prove the non-existence of weak solution for $\delta >sp$, as a consequence of weak comparison principle (Theorem \ref{WCP}) and boundary behavior of weak solution (Theorem \ref{thm:regularity}). In section \ref{ann}, we recall the local regularity results for bounded forcing term from \cite{BLS, IMS}.
\section{Comparison principle and existence result}\label{exis-comp}
In this section, we prove the weak comparison principle and existence result concerning the problem $(P).$\\
\noindent \textbf{Proof of Theorem \ref{WCP}}: The proof is almost identical as the proofs of Lemma 4.1 and Theorem 4.2 in  \cite{CMB}. For the reader's convenience, we precise some details to explain the restriction on $\delta$. More precisely, we need a minimizer belonging to $\mathcal{L}:= \{\phi \in W_0^{s,p}(\Omega):\ 0 \leq \phi \leq \tilde{v} \ \text{a.e. in} \ \Omega \}$ of the following energy functional defined on $W_0^{s,p}(\Omega)$ as, for $\epsilon>0$
$$J_\epsilon(w):= \frac{1}{p} \iint_{\mathbb{R}^{2N}} \frac{|w(x)-w(y)|^p}{|x-y|^{N+sp}} ~dx ~dy -  \int_{\Omega} K_\delta(x) G_\epsilon(w) ~dx$$
where $G_\epsilon$ is the primitive such that $G_\epsilon(1)=0$ of the function $g_\epsilon$ defined by
\begin{equation*}\label{propq}
 \begin{aligned}
 g_\epsilon(t)=  \left\{
 \begin{array}{ll}
 \min\left\{\frac{1}{t^\gamma}, \frac{1}{\epsilon}\right\}  &  \text{ if } t >0, \\
 \frac{1}{\epsilon} & \text{ if } t \leq 0.\\
 \end{array} 
 \right.
 \end{aligned}
 \end{equation*}
Let $\{w_n\} \subset W_0^{s,p}(\Omega)$ be such that $w_n \rightharpoonup w$ in $W_0^{s,p}(\Omega)$. Let $\nu \in (0,1)$ small enough such that $\frac{1-\nu}{p} + \frac{\nu}{q} + \frac{1}{r}= 1$ where $q < p_s^\ast:=\frac{Np}{N-sp}$ if $N>sp$ and $(s(1-\nu) -\delta)r > -1$ (since $\delta< 1 + s- \frac{1}{p}$).\\
Hence $x\mapsto d^{s(1-\nu) -\delta}(x)\in L^r(\Omega)$ and by using H\"older and Hardy inequalities (see Theorem 1.4.4.4 and Corollary 1.4.4.10 in \cite{Gri}), we obtain
\begin{equation*}   
\begin{split}
\int_{\Omega} \frac{|w_n-w|}{d^\delta(x)} ~dx &= \int_{\Omega} \left(\frac{|w_n-w|}{d^s(x)}\right)^{1-\nu} |w_n -w|^\nu d^{s(1-\nu)-\delta}(x) ~dx \\
& \leq C \|w_n -w\|_{s,p}^{1-\nu} \|w_n-w\|_{L^q(\Omega)}^{\nu } 
\end{split}
\end{equation*}
for some constant $C>0$ independent of $w_n$ and $w$.\\
Since $W_0^{s,p}(\Omega)$ is compactly embedded in $L^q(\Omega)$ for $q< p_s^\ast$, $\|w_n -w\|_{s,p}$ is uniformly bounded in $n$ and $\|w_n -w\|_{L^q(\Omega)} \to 0$ as $n\rightarrow \infty$.\\
Finally, gathering the lower semicontinuity of $[.]_{s,p}$ and $G_\epsilon$ globally Lipschitz, we deduce that $J_\epsilon$ is weakly lower semicontinuous in $ W_0^{s,p}(\Omega)$ and admits a minimizer $w_0$ on  $\mathcal{L}$.\\  
The rest of the proof follows exactly the proofs of Lemma 4.1 and Theorem 4.2 in  \cite{CMB} and we obtain 
\begin{equation*}
u \leq w_0 \leq \tilde{v} \ \text{in} \ \Omega.
\end{equation*}
By following the same idea of proof, we can prove it for $\gamma=0.$
\qed
\\
\ \\
Now we prove our existence and uniqueness result:\vspace{0.2cm} \\
\noindent \textbf{Proof of Theorem \ref{esix-prob}}: Let $u_\epsilon \in W_0^{s,p}(\Omega)$ be the weak solution of $(P_\epsilon^\gamma).$ 
{Adapting the proofs of Theorem 3.2 and 3.6 in \cite{CMB},} 
it is sufficient to verify the sequences  $\{u_\epsilon\}_\epsilon$ in the case $\delta-s(1-\gamma)\leq0$ and $\{u^\theta_\epsilon\}$ for a suitable parameter $\theta >1$ in the case $\delta-s(1-\gamma)> 0$ are  bounded in $W^{s,p}_0(\Omega)$ and the convergence of the right-hand side in \eqref{oiu}.\\ 
\textbf{Case 1:}  $\delta-s(1-\gamma)\leq 0$.\\ 
The condition implies $\gamma<1$ hence taking $\phi= u_\epsilon$ in \eqref{oiu} and applying H\"older and Hardy inequalities {(see Theorem 1.4.4.4 and Corollary 1.4.4.10 in \cite{Gri})} , we obtain
\begin{equation}\label{e4}
[u_\epsilon]_{s,p}^p \leq  \mathcal{C}_2 \int_{\Omega} d^{s(1-\gamma)-\delta}(x) \left(\frac{u_\epsilon}{d^{s}(x)}\right)^{1-\gamma} ~dx \leq C \|\frac{u_\epsilon}{d^{s}}\|_{L^p(\Omega)}^{1-\gamma} \leq C\ [u_\epsilon]_{s,p}^{1-\gamma}
\end{equation}
which implies $\|u_\epsilon\|_{s,p} \leq C < \infty$.\\ 
\textbf{Case 2:} $\delta-s(1-\gamma)> 0$\\
Let $\Phi: \mathbb{R}_+ \to \mathbb{R}_+$ be the function defined as  
$\Phi(t)= t^{\theta}$ for some 
\begin{equation}\label{e18}
 \theta > \max\left\{1,\frac{p+\gamma-1}p, \Lambda\right\}.
\end{equation}
For any $\epsilon>0$, choosing $g= \frac{K_{\epsilon, \delta}}{(u_\epsilon + \epsilon)^\gamma} \in L^\infty(\Omega)$ and $w= \Phi \circ u_\epsilon$ in Proposition \ref{prelim2}, we obtain
\begin{equation}\label{e05}
\iint_{\mathbb{R}^{2N}} \frac{[\Phi(u_\epsilon)(x)-\Phi(u_\epsilon)(y)]^{p-1} (\phi(x)-\phi(y))}{|x-y|^{N+sp}} ~dx ~dy\leq  \int_{\Omega} \frac{K_{\epsilon, \delta}(x)}{(u_\epsilon+ \epsilon)^\gamma} |\Phi'(u_\epsilon)|^{p-2} \Phi'(u_\epsilon) \phi ~dx
\end{equation}
for all nonnegative functions $\phi \in W_0^{s,p}(\Omega)$. Since $u_\epsilon \in W_0^{s,p}(\Omega) \cap L^\infty(\Omega)$ and $\Phi$ is locally Lipschitz, therefore $\Phi(u_\epsilon) \in W_0^{s,p}(\Omega).$ Then by choosing $\phi= \Phi(u_\epsilon)$ as a test function in \eqref{e05}, we get
\begin{equation}\label{e6}
\begin{split}
\iint_{\mathbb{R}^{2N}} \frac{|\Phi(u_\epsilon)(x)-\Phi(u_\epsilon)(y)|^{p}}{|x-y|^{N+sp}} ~dx ~dy &\leq \int_{\Omega} \frac{K_{\epsilon, \delta}(x)}{(u_\epsilon+ \epsilon)^\gamma} |\Phi'(u_\epsilon)|^{p-2}  \Phi'(u_\epsilon) \Phi(u_\epsilon) ~dx\\
& \leq \mathcal{C}_2 \int_{\Omega} \frac{1}{ d^\delta(x)} \frac{|\Phi'(u_\epsilon)|^{p-2}  \Phi'(u_\epsilon) \Phi(u_\epsilon)}{u_\epsilon^\gamma} ~dx. 
\end{split}
\end{equation}
Now, for any $\epsilon>0$, there exists a constant $C$ independent of $\epsilon$ such that 
\begin{equation}\label{e7}
\frac{|\Phi'(u_\epsilon)|^{p-2}  \Phi'(u_\epsilon) \Phi(u_\epsilon)}{u_\epsilon^\gamma} \leq C (\Phi(u_\epsilon))^{\frac{\theta p - (p+\gamma-1)}{\theta}}.
\end{equation}
where $\frac{\theta p - (p+\gamma-1)}{\theta}>0$ since $\theta> \frac{p+\gamma-1}{p}$. By combining \eqref{e6}-\eqref{e7},  we obtain applying H\"older and Hardy inequalities: 
\begin{equation*}
\begin{split}
[\Phi(u_\epsilon)]_{s,p}^p & \leq C \int_{\Omega} d^{\frac{s(\theta p - (p+\gamma-1))}{\theta}-\delta}(x) \left(\frac{\Phi(u_\epsilon)}{d^s}(x)\right)^{\frac{\theta p - (p+\gamma-1)}{\theta}} ~dx \\
&\leq C \left(\int_{\Omega} d^{\frac{sp(\theta-\Lambda)-\theta}{\Lambda}}(x) ~dx \right)^{\frac{p+\gamma-1}{\theta p}} \left(\int_{\Omega} \left(\frac{\Phi(u_\epsilon)}{d^s(x)}\right)^{p} ~dx \right)^{\frac{\theta p - (p+\gamma-1)}{\theta p }}\\
& \leq C \ [\Phi(u_\epsilon)]_{s,p}^{\frac{\theta p - (p+\gamma-1)}{\theta}}
\end{split}
\end{equation*}
and we conclude  $\left\{\Phi(u_\epsilon)\right\}_{\epsilon >0}$ is bounded in $W_0^{s,p}(\Omega)$.\\
Finally, let $\tilde \Omega \Subset\Omega$, and   $\phi \in W^{s,p}_0(\tilde \Omega)$. 
By Proposition \ref{prelim}, there exists a constant $\eta_{\tilde \Omega}$ such that for any $\epsilon>0$,
\begin{equation*}\label{e0} u_\epsilon(x) \geq \eta_{\tilde \Omega}, \ \text{for a.e. in } \tilde \Omega.
\end{equation*} 
By the previous inequality, we have 
\begin{equation}\label{e15}
\left| \frac{K_{\epsilon, \delta}(x) \phi}{(u_\epsilon + \epsilon)^\gamma}\right| \leq \eta^\gamma_{\tilde \Omega}  M |\phi|
\end{equation} 
where $M= \dist^{-\delta}(\tilde \Omega, \Omega)$, hence we get by Dominated convergence theorem:
\begin{equation}\label{e16}
\int_{\Omega} \frac{K_{\epsilon, \delta}(x)}{(u_\epsilon+\epsilon)^\gamma} \phi ~dx\to \int_{\Omega} \frac{K_\delta(x)}{u^\gamma} \phi ~dx
\end{equation}
where $u:=\lim_{\epsilon \to 0} u_\epsilon$. The rest of the proof follows exactly the end of the proofs of Theorem 3.2 and 3.6 in \cite{CMB}.\\
Finally, for any $\epsilon >0$, $u_\epsilon \leq v$ {\it a.e.} in $\Omega$ where $v$ is another weak solution of $(P)$. Indeed, $v$ is a weak supersolution in sense of Definition \ref{def1} of the problem $(P_\epsilon^\gamma)$ hence Theorem 4.2 in \cite{CMB} implies the inequality. Passing to the limit $\epsilon \to 0$ gives that $u$ is a minimal solution.
\qed 
\begin{remark}\label{rk31}
The proof of Case 1 holds assuming $\Lambda\leq 1$ and $\gamma<1$. Indeed, $d^{s(1-\gamma)-\delta}\in L^{\frac{p}{p-1+\gamma}}(\Omega)$ and we obtain \eqref{e4}.
\end{remark}
\begin{remark}
In case of $\delta=0$, the Sobolev regularity of the minimal weak solution in Theorem \ref{esix-prob} coincides with the Sobolev regularity in Theorem $3.2$ for $\gamma \leq 1$ and Theorem $3.6$ in \cite{CMB} for $\gamma >1$ by taking $\theta= \frac{p+\gamma-1}{p}.$
\end{remark}

\noindent\textbf{Proof of Corollary \ref{cor:uniq}} \\
Let $u_1, u_2$ are two solution of the problem $(P).$ Then by considering $u_1$ and $u_2$ as a subsolution and supersolution respectively in Theorem \ref{WCP}, we get $u_1 \leq u_2$ in $\Omega$ for $0< \delta < 1+s-\frac{1}{p}.$ Now, by reversing the role of $u_1$ and $u_2$, we obtain $u_1=u_2.$
\qed
\section{Construction of barrier functions}\label{distance}
In this section, we construct explicit sub and  supersolution for the following problem 
\begin{equation*}
      (S_{0}^\delta) \left\{
         \begin{alignedat}{2} 
             {} (-\Delta)^s_p u(x)
             & {}= K_\delta(x)
             && \quad\mbox{ in } \, \Omega,           
             \\
             u & {}= 0
             && \quad\mbox{ in }\, \mathbb{R}^N \setminus \Omega.   
          \end{alignedat}
     \right.
\end{equation*}
Before that, we introduce the new notion of weak solution and corresponding vector space:\\ 
Let $\Omega \subset \mathbb{R}^N$ be bounded. We define
$$\overline{W}^{s,p}(\Omega):=\left\{u \in L^p_{loc}(\mathbb{R}^N):\  \exists\ K \ \text{s.t.} \ \Omega \Subset K, \|u\|_{W^{s,p}(K)} + \int_{\mathbb{R}^N} \frac{|u(x)|^{p-1}}{(1+|x|)^{N+sp}}~dx < \infty \right\}$$
where $ \|u\|_{W^{s,p}(\Omega)}=\|u\|_{L^p(\Omega)}+[u]_{s,p,\Omega}$.
If $\Omega$ is unbounded, we define
$$\overline{W}_{loc}^{s,p}(\Omega):= \{u \in L^p_{loc}(\mathbb{R}^N): u \in \overline{W}^{s,p}(\tilde{\Omega}),\ \text{for any bounded }\ \tilde{\Omega} \subset \Omega\}.$$ 
\begin{define}\label{notion}
(\textbf{Weak energy Solution}) 
Let $f \in L^{p'}(\Omega)$ where $p'$ is the conjugate exponent of $p$ and $\Omega$ be a bounded domain. We say that $u \in \overline{W}^{s,p}(\Omega)$ is a weak energy solution of $(-\Delta)^s_p\,u= f$ in $\Omega$, if
{$$(-\Delta)^s_p\, u =  f \ \ \  \text{E-weakly in}\ \Omega$$
that is}
$$ \iint_{\mathbb{R}^{2N}} \frac{[u(x)-u(y)]^{p-1}(\phi(x)-\phi(y))}{|x-y|^{N+sp}} ~dx~dy= \int_{\Omega} f(x) \phi(x)~dx  $$
for all $\phi \in W_0^{s,p}(\Omega)$ and a function $u$ is a weak energy subsolution (resp. weak energy supersolution) of  $(-\Delta)^s_p\,u= f$ in $\Omega$, if 
$$(-\Delta)^s_p\, u \leq (resp. \geq) \  f \ \ \  \text{E-weakly in}\ \Omega$$
that is
$$ \iint_{\mathbb{R}^{2N}} \frac{[u(x)-u(y)]^{p-1}(\phi(x)-\phi(y))}{|x-y|^{N+sp}}~dx~dy  \leq (resp. \geq) \int_{\Omega} f(x) \phi(x)~dx  $$
for all $\phi \in W_0^{s,p}(\Omega), \phi \geq 0.$\\
If $\Omega$ is unbounded we say that $u \in \overline{W}^{s,p}_{loc}(\Omega)$ is a weak energy solution (weak energy subsolution/weak energy supersolution) of $(-\Delta)^s_p(u) = (\leq/\geq)\ f$ in $\Omega$, if it does so in any open bounded set $\Omega' \subset \Omega.$
\end{define} 
\noindent  For any $\alpha\in(0,s)$, we define 
$$\beta:= sp- \alpha(p-1).$$
We start by computing the upper and lower estimates in the half line $\mathbb{R}_+:= \{x \in \mathbb{R}: x >0\}$ of  $(-\Delta)^s_p$ of the function $U_\lambda(x):=\left((x+\lambda^{\frac{1}{\alpha}})^+\right)^\alpha$, $\lambda\geq 0$ defined in $\R$.\\
We recall the notation, for any $t\in \R$, $ [t]^{p-1}= |t|^{p-2} t$. 
\begin{thm}\label{thest}
Let $\lambda \geq 0$, $\alpha\in(0,s)$ and $p>1$. Then, there exist two positive constants $C_1,\,C_2 >0$ depending upon $\alpha,\ p$ and $s$ such that
\begin{equation}\label{est1}
C_1 (x+ \lambda^{\frac{1}{\alpha}})^{-\beta} \leq (-\Delta)^s_p U_\lambda(x) \leq C_2 (x+ \lambda^{\frac{1}{\alpha}})^{-\beta} \  \text{pointwisely in} \ \mathbb{R}_+.
\end{equation}
Moreover,  for $\lambda>0$, $U_\lambda \in \overline{W}^{s,p}_{loc}(\mathbb{R}_+)$ and for $\lambda=0$, $U_\lambda \in \overline{W}^{s,p}_{loc}(\mathbb{R}_+)$ if $s-\frac{1}{p} < \alpha <s$.
\end{thm}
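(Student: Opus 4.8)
The strategy is to obtain the two-sided bound \eqref{est1} by a direct, if lengthy, analysis of the singular integral defining $(-\Delta)^s_p U_\lambda(x)$ at a fixed point $x>0$, exploiting a scaling reduction and then separately estimating the contributions from the regions where $U_\lambda$ is smooth and where the kernel is singular. First I would reduce to the case $\lambda=1$ by the natural scaling of the fractional $p$-Laplacian: since $U_\lambda(x) = \lambda\, U_1(\lambda^{-1/\alpha} x)$ and $(-\Delta)^s_p$ is positively $(p-1)$-homogeneous and scales with weight $sp$ under dilations $x\mapsto \mu x$, one checks that $(-\Delta)^s_p U_\lambda(x) = \lambda^{p-1}\mu^{sp}\,[(-\Delta)^s_p U_1](\lambda^{-1/\alpha}x)$ with $\mu=\lambda^{-1/\alpha}$, and since $\lambda^{p-1}\lambda^{-sp/\alpha} = \lambda^{-\beta/\alpha}$ while $(x+\lambda^{1/\alpha})^{-\beta} = \lambda^{-\beta/\alpha}(\lambda^{-1/\alpha}x+1)^{-\beta}$, the statement for general $\lambda>0$ follows from the statement for $\lambda=1$ evaluated at $t = \lambda^{-1/\alpha}x$. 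The case $\lambda=0$ is then recovered either by the same homogeneity computation directly (here $U_0(x)=(x^+)^\alpha$ is exactly homogeneous of degree $\alpha$, so $(-\Delta)^s_p U_0(x) = C x^{-\beta}$ for a constant $C$ that one must show is positive and finite) or by passing to the limit $\lambda\to0^+$ in the bounds, using monotone/dominated convergence in the defining integral.

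So the core is: for a fixed $t\ge 0$, estimate
$$
I(t) := 2\int_{\mathbb{R}} \frac{[\,(t+1)^\alpha - ((y+1)^+)^\alpha\,]^{p-1}}{|t-y|^{N+sp}}\,dy
$$
(with $N=1$ here) and show $C_1 (t+1)^{-\beta}\le I(t)\le C_2(t+1)^{-\beta}$. I would split $\mathbb{R}$ into: (i) $y<-1$, where the integrand is $\big((t+1)^\alpha\big)^{p-1}|t-y|^{-1-sp}$, a convergent tail contributing something comparable to $(t+1)^{\alpha(p-1)}(t+1)^{-sp} = (t+1)^{-\beta}$ when $t$ is not too large, and a genuinely smaller-order term otherwise; (ii) a neighborhood $|y-t|<\tfrac12(t+1)$ of the pole, where one Taylor-expands $((y+1)^\alpha)$ to second order and uses the standard cancellation to see the principal value integral converges and is of the right size — here the Hölder/$C^{1,\alpha}$ behavior of $r\mapsto r^\alpha$ near $r=t+1>0$ gives that the second difference is $O(|t-y|^2 (t+1)^{\alpha-2})$ near the pole, integrable against $|t-y|^{-1-sp}$ since $sp<2$, producing a term $\lesssim (t+1)^{\alpha(p-1)}(t+1)^{(2-sp)}\cdot(t+1)^{-2}$... one must track the exponents carefully so they assemble to $(t+1)^{-\beta}$; (iii) the intermediate region and the part near $y=-1$ where $U_\lambda$ transitions to $0$, which again is an absolutely convergent integral estimated by direct size bounds. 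Matching powers: every region should, after the dust settles, contribute a quantity bounded above and below by a constant times $(t+1)^{-\beta}$, with the constant positivity coming from the fact that $(t+1)^\alpha - ((y+1)^+)^\alpha > 0$ for $y>t$ guaranteeing $I(t)>0$ strictly.

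Finally, for the membership statement I would verify $U_\lambda\in\overline{W}^{s,p}_{loc}(\mathbb{R}_+)$ by checking the two conditions in the definition of $\overline{W}^{s,p}$: the weighted tail integral $\int_{\mathbb{R}}|U_\lambda(x)|^{p-1}(1+|x|)^{-1-sp}\,dx$ converges because $U_\lambda(x)\sim x^\alpha$ at infinity and $\alpha(p-1)-(1+sp) = -\beta-1<-1$; and the local Gagliardo seminorm $[U_\lambda]_{s,p,\tilde\Omega}$ is finite on bounded $\tilde\Omega\subset\mathbb{R}_+$ — for $\lambda>0$ this is immediate since $U_\lambda$ is $C^1$ (indeed smooth) on any compact subset of $\{x>-\lambda^{1/\alpha}\}\supset\overline{\tilde\Omega}$, while for $\lambda=0$ the only issue is the corner at the origin, where $U_0(x)=(x^+)^\alpha$ and a direct computation of $\iint_{(-a,a)^2}|U_0(x)-U_0(y)|^p|x-y|^{-1-sp}\,dx\,dy$ converges exactly when $\alpha p > sp$, i.e. $\alpha > s - \ldots$; more precisely the sharp threshold is $s-\tfrac1p<\alpha$, which one extracts by testing against the known Hardy-type/embedding estimate for the function $x^\alpha$ near $0$ in one dimension. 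The main obstacle, and where the bulk of the work lies, is step (ii)–(iii): carrying out the near-pole expansion uniformly in $t\ge0$ with explicit control of constants and verifying that all the regional contributions have exponent exactly $-\beta=\alpha(p-1)-sp$ rather than something off by the transition at $y=-1$; the scaling reduction and the function-space verification are comparatively routine once the pointwise estimate is in hand.
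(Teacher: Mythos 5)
The central gap in your proposal is the positivity of the lower-bound constant $C_1$, and this is not a polishing issue but the heart of the matter. You write that "the constant positivity [comes] from the fact that $(t+1)^\alpha-((y+1)^+)^\alpha>0$ for $y>t$, guaranteeing $I(t)>0$ strictly." This is incorrect on two counts. First, the sign is reversed: for $y>t$ one has $(y+1)^\alpha>(t+1)^\alpha$, so the integrand $[\,U(t)-U(y)\,]^{p-1}|t-y|^{-1-sp}$ is \emph{negative} there and positive only for $y<t$. Second, and more seriously, even with the sign corrected, observing that the integrand is positive on one side does nothing to establish positivity of the integral, since the contribution from $y>t$ is negative and competes with it. Positivity of $\lim_{\epsilon\to 0}\mathcal P_\epsilon$ is genuinely delicate: it degenerates as $\alpha\uparrow s$ (at $\alpha=s$ one has $\beta=s$ and $(-\Delta)^s_p(x^+)^s=0$, so no positive lower bound survives), and it is precisely the hypothesis $\alpha<s$ that forces $\beta>s$ and makes a positive constant available. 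The paper handles this by computing the far-left tail contribution $\mathcal P_1=\tfrac{1}{sp}$ exactly and then bounding the remaining (possibly negative) combined contribution from below by $-\tfrac{1}{\tilde s p}$ for some auxiliary $\tilde s\in(s,\min\{\beta,1\})$; the positive gap $\tfrac{1}{sp}-\tfrac{1}{\tilde s p}$ is the constant $C_1$. Your proposal has no mechanism to produce this comparison.

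On the methodology: apart from this gap, your route is genuinely different from the paper's. You propose a scaling reduction to $\lambda=1$ followed by a Taylor-expansion/PV-cancellation argument near the pole; the paper instead performs a single affine change of variable $z\mapsto y=\frac{z+\lambda^{1/\alpha}}{x+\lambda^{1/\alpha}}$ (which normalizes $\lambda$ and $x$ at once, so your separate scaling step is folded in), and — crucially — then substitutes $y\to 1/y$ in the integral over $(1,\infty)$ to fold it onto $(0,1)$, yielding the integrand $\frac{(1-y^\alpha)^{p-1}(1-y^{\beta-1})}{(1-y)^{1+sp}}$ which is \emph{absolutely} integrable on $(0,1)$. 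This sidesteps the principal-value cancellation entirely. Your near-pole Taylor/PV sketch is also not quite right as stated for $p\neq 2$: since $[a]^{p-1}$ is not linear, the symmetrized quantity $[\,U(t)-U(t+h)\,]^{p-1}+[\,U(t)-U(t-h)\,]^{p-1}$ is not a "second difference" of $U$; one needs to expand $x\mapsto|x|^{p-2}x$ around $\pm U'(t)h$, which leads to an estimate of order $(U'(t))^{p-2}|U''(t)|\,h^p$. This does happen to assemble to the exponent $-\beta$, but the argument is not the one you outline. If you repair the positivity argument (by isolating and computing the positive tail contribution, or by adopting the paper's $y\to 1/y$ trick), the rest of your plan — the scaling reduction, the far-tail and transition-region bounds, and the $\overline W^{s,p}_{loc}$ membership via the tail integral and the local Gagliardo seminorm — is sound and matches the paper in substance.
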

\begin{proof}
Let $x \in \mathbb{R}_+$ and let $\epsilon\in \R$ such that $|\epsilon|<x$. We have
\begin{equation*}
\begin{split}
 \int_{\R\setminus (x-|\epsilon|,x+|\epsilon|)} \frac{[U_\lambda(x)-U_\lambda(z)]^{p-1}}{|x-z|^{1+sp}} ~dz
& = \left(\int_{-\infty}^{-\lambda^{\frac{1}{\alpha}}} \dots + \int_{-\lambda^{\frac{1}{\alpha}}}^{x-|\epsilon|} \dots + \int_{x+|\epsilon|}^\infty \dots \right)\\
&= (x+ \lambda^{1/\alpha})^{-\beta} \mathcal{P_\epsilon}(x)
\end{split}
\end{equation*}
where, by the change of variable $y=\frac{z+\lambda^{1/\alpha}}{x+ \lambda^{1/\alpha}}$:
\begin{align*}
\mathcal{P_\epsilon}(x):=&(x+ \lambda^{1/\alpha})^{sp} \int_{-\infty}^{- \lambda^{1/ \alpha}} {\frac{1}{|x-z|^{1+sp}} ~dz} + \int_{0}^{1-\frac{|\epsilon|}{x + \lambda^{1/\alpha}}} \frac{[1 - y^\alpha]^{p-1}}{|1-y|^{1+sp}} ~dy\\
&+ 
\int_{1+\frac{|\epsilon|}{x + \lambda^{1/\alpha}- |\epsilon|}}^\infty \frac{[1 - y^\alpha]^{p-1}}{|1-y|^{1+sp}} ~dy+ \int_{1+\frac{|\epsilon|}{x + \lambda^{1/\alpha}}}^{1+\frac{|\epsilon|}{x + \lambda^{1/\alpha}-|\epsilon|}} \frac{[1 - y^\alpha]^{p-1}}{|1-y|^{1+sp}} ~dy\\
&:=\mathcal{P}_1(x) + \mathcal{P}_2(x, \epsilon) + \mathcal{P}_3(x, \epsilon) + \mathcal{P}_4(x, \epsilon).
\end{align*}
To conclude \eqref{est1}, it suffices to obtain a uniform estimate of $\mathcal{P_\epsilon}$ in $\mathbb{R}_+$. First we note
\begin{equation}\label{est01}
\begin{split}
\mathcal{P}_1(x)&= (x+ \lambda^{1/\alpha})^{sp} \int_{-\infty}^{- \lambda^{1/ \alpha}} {\frac{1}{|x-z|^{1+sp}} ~dz}  = \frac{1}{sp}.
\end{split}
\end{equation}
Moreover, the change of variable $y \to \frac{1}{y}$ in $\mathcal P_{3}$ yields:
$$\mathcal P_{3}(x,\epsilon)=- \int_0^{1-\frac{|\epsilon|}{x + \lambda^{1/\alpha}}} \frac{(1 - y^\alpha)^{p-1} y^{\beta -1} }{|1-y|^{1+sp}} ~dy.$$
Hence
%
\begin{equation}\label{2-3}
\mathcal P_{2,3}(x,\epsilon):=\mathcal P_{2}(x,\epsilon)+\mathcal P_{3}(x,\epsilon)= \int_0^{1-\frac{|\epsilon|}{x + \lambda^{1/\alpha}}} \frac{(1 - y^\alpha)^{p-1} (1- y^{\beta -1}) }{|1-y|^{1+sp}} ~dy. 
\end{equation}
We consider two cases to estimate $\mathcal P_{2,3}$:\\ 
\textbf{Case 1:}  $\beta<1$.\\
First, note in this case, $\mathcal P_{2,3}(x,\epsilon)\leq 0$, it suffices to estimate $\mathcal P_{2,3}$ from below. \\
 There exists  $ \tilde{s}\in (s,1)$ such that $\beta > \tilde{s}$ hence for any $y\in (0,1)$:
 $$y^{\beta-1}-1 \leq y^{\tilde{s}-1}-1,\ (1-y^{\alpha})\leq (1-y^{\tilde{s}})\ \mbox{ and }\  \frac{1}{(1-y)^{1+sp}} \leq \frac{1}{(1-y)^{1+\tilde{s}p}}.$$
Then by using the above estimates in \eqref{2-3}, we obtain, 
\begin{equation}\label{est2}
\begin{split}
\mathcal P_{2,3}(x,\epsilon) 
&\geq   \int_0^{1-\frac{|\epsilon|}{x + \lambda^{1/\alpha}}}  \frac{(1 - y^{\tilde{s}})^{p-1} (1-y^{\tilde{s}-1}) }{(1-y)^{1+\tilde{s}p}} ~dy \\
&=  \left[ \frac{1}{\tilde{s}p} \frac{(1-y^{\tilde{s}})^p}{(1-y)^{\tilde{s}p}} \right]_0^{1-\frac{|\epsilon|}{x + \lambda^{1/\alpha}}}
 =\frac{1}{\tilde{s} p} \left(\left( \frac{(x+ \lambda^{1/\alpha})^{\tilde{s}}- (x+ \lambda^{1/\alpha}-|\epsilon|)^{\tilde{s}}}{|\epsilon|^{\tilde{s}}}\right)^p-1  \right)\\
 &\geq -\frac{1}{\tilde sp}. 
\end{split}
\end{equation}
\textbf{Case 2:}  $\beta\geq 1$\\
In the same way, we note that $\mathcal P_{2,3}(x,\epsilon)\geq 0$. Now, for the upper bound, using $1-y^\kappa \leq \max\{1,\kappa\}(1-y)$ for any $y\in(0,1)$ and $\kappa>0$  we get:
\begin{equation}\label{est201}
\mathcal P_{2,3}(x,\epsilon)
 \leq \max\{1,\beta-1\} \int_0^{1-\frac{|\epsilon|}{x + \lambda^{1/\alpha}}} (1-y)^{p(1-s)-1} ~dy
\leq \frac{\max\{1,\beta-1\}}{p(1-s)}.
\end{equation}
Finally we estimate the last term $\mathcal P_4$:\\
\begin{equation}\label{est4}
\begin{split}
|\mathcal P_4(x,\epsilon)| &\leq \int_{1+\frac{|\epsilon|}{x + \lambda^{1/\alpha}}}^{1+\frac{|\epsilon|}{x + \lambda^{1/\alpha}- |\epsilon|}} \frac{| y^\alpha -1|^{p-1}}{|1-y|^{1+sp}} ~dy \leq \int_{1+\frac{|\epsilon|}{x + \lambda^{1/\alpha}}}^{1+\frac{|\epsilon|}{x + \lambda^{1/\alpha}- |\epsilon|}} \frac{| y^s -1|^{p-1}}{|y-1|^{1+sp}} ~dy\\
& \leq \int_{1+\frac{|\epsilon|}{x + \lambda^{1/\alpha}}}^{1+\frac{|\epsilon|}{x + \lambda^{1/\alpha}- |\epsilon|}} \frac{| y-1|^{s(p-1)}}{|y-1|^{1+sp}} ~dy = \frac{1}{s} \frac{(x+\lambda^{1/\alpha})^s- (x+ \lambda^{1/\alpha}-|\epsilon|)^s}{|\epsilon|^s}:= \frac{\xi_\epsilon(x)}{s}.
\end{split}
\end{equation}
Noting $\xi_\epsilon(x)\to 0$ {\it a.e.} in $x\in \R_+$, we deduce, combining \eqref{est01}-\eqref{est4}, that there exist two constants $C_1$ and $C_2$ independent of $x$  such that, for any $x\in \R_+$: 
$$ C_1 \leq \lim_{\epsilon\to 0}\mathcal{P}_\epsilon(x) \leq C_2.$$
Hence we deduce \eqref{est1}. More precisely, the constant $C_1$ and $C_2$ are given by
\begin{equation}\label{c1}
 \begin{aligned}
 C_1=  \left\{
 \begin{array}{ll}
  \frac{1}{p}\left(\frac{\tilde{s}-s}{\tilde{s}s}\right)  &  \text{ if } \beta<1, \\
 \frac{1}{sp} & \text{ if } \beta \geq 1 ,\\
 \end{array} 
 \right. \quad \mbox{ and } \quad
 C_2=  \left\{
 \begin{array}{ll}
  \frac{1}{sp}
  &  \text{ if } \beta<1, \\
 \frac{1}{sp}+ \frac{\max\{ 1, \beta-1  \}}{p(1-s)} & \text{ if } \beta \geq 1.\\
 \end{array} 
 \right.
 \end{aligned}
 \end{equation}
\noindent Finally the assertion, $U_\lambda \in \overline{W}^{s,p}_{loc}(\mathbb{R}_+)$ follows by showing $U_\lambda \in \overline{W}^{s,p}(a,b)$ for all $-\lambda^{1/\alpha} < a < b < \infty.$
Indeed, using the symmetry of the integrand and changes of variable, we obtain 
\begin{equation}\label{sp}
\begin{split}
\iint_{[a,b]^2} \frac{|U_\lambda(x)-U_\lambda(y)|^{p}}{|x-y|^{1+sp}} ~dx~dy 
& = \int_{a+\lambda^{1/\alpha}}^{b+\lambda^{1/\alpha}} \int_{a+\lambda^{1/\alpha}}^{b+\lambda^{1/\alpha}} \frac{|x^\alpha - y^\alpha|^p}{|x-y|^{1+sp}} ~dx~dy\\
&= 2 \int_{a+\lambda^{1/\alpha}}^{b+\lambda^{1/\alpha}} \int_{a+\lambda^{1/\alpha}}^{x} \frac{|x^\alpha - y^\alpha|^p}{|x-y|^{1+sp}} ~dy~dx\\
& = 2 \int_{a+\lambda^{1/\alpha}}^{b+\lambda^{1/\alpha}} x^{\alpha p-sp} \int_{\frac{a+\lambda^{1/\alpha}}{x}}^{1} \frac{(1-t^{\alpha})^p}{(1-t)^{1+sp}} ~dt ~dx \\
&< 2 \int_{a+\lambda^{1/\alpha}}^{b+\lambda^{1/\alpha}} x^{\alpha p-sp} \int_{0}^{1} \frac{(1-t)^p}{(1-t)^{1+sp}} ~dt ~dx < \infty
\end{split}
\end{equation}
for any $\alpha\in (0,s)$ if $\lambda>0$ and $\alpha\in (s-\frac{1}{p} ,s)$ if $\lambda=0$. 
\end{proof} 
\noindent Next, we study the behavior of $(-\Delta)^s_p V_\lambda(x)$ on $\mathbb{R}^N_+:= \{x \in \mathbb{R}^N: x_N > 0\}$ where $V_\lambda(x):= U_\lambda(x\cdot e_N)=U_\lambda(x_N)$.\\
Let $GL_N$ be the set of $N \times N$ invertible matrices
, we have
\begin{cor}\label{esthm}
Let $\lambda \geq 0$, $\alpha\in(0,s)$, $A\in  GL_N$ and $p>1$. Let $\mathcal J_{\epsilon,A}$ be the function defined on $\R^N_+$ by
$$\mathcal J_{\epsilon, A}(x) = \int_{(B_\epsilon(0))^c} \frac{[V_\lambda(x)-V_\lambda(x+z)]^{p-1}}{|Az|^{N+sp}} ~dz$$
for some $\epsilon>0$.\\
Then, there exist two positive constants $C_3$ and $C_4$ depending on $\alpha,s,p,N,\|A\|_2, \|A^{-1}\|_2$ such that
\begin{equation}\label{est5}
C_3 (x_N+ \lambda^{1/\alpha})^{-\beta} \leq \lim_{\epsilon \to 0} \mathcal J_{\epsilon,A}(x) \leq  C_4 (x_N+ \lambda^{1/\alpha})^{-\beta} 
\end{equation}
pointwisely in $\mathbb{R}^N_+ \times GL_N.$ 
Moreover,  for $\lambda>0$, $V_\lambda \in \overline{W}^{s,p}_{loc}(\mathbb{R}^N_+)$ and for $\lambda=0$, $V_\lambda \in \overline{W}^{s,p}_{loc}(\mathbb{R}^N_+)$ if $s-\frac{1}{p} < \alpha <s$.
\end{cor}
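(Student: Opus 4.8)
The plan is to reduce the $N$-dimensional estimate \eqref{est5} to the one-dimensional result of Theorem \ref{thest} by integrating out the $N-1$ directions transverse to $e_N$. First I would normalize: since $\|A\|_2$ and $\|A^{-1}\|_2$ control $|z|$ against $|Az|$ from above and below, we have $\|A^{-1}\|_2^{-N-sp}|z|^{-N-sp} \leq |Az|^{-N-sp} \leq \|A\|_2^{N+sp}|z|^{-N-sp}$, so it suffices to prove the two-sided bound for the kernel $|z|^{-N-sp}$, i.e. for the quantity $\int_{(B_\epsilon(0))^c} [V_\lambda(x)-V_\lambda(x+z)]^{p-1}|z|^{-N-sp}\,dz$, and then absorb the matrix norms into the constants $C_3,C_4$. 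The sign of the integrand depends only on $x_N$ versus $x_N+z_N$ (since $V_\lambda$ is nondecreasing in the last coordinate and $U_\lambda$ is monotone), so no cancellation is lost at this step.

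Next I would perform the slicing. Write $z=(z',z_N)$ with $z'\in\R^{N-1}$, $z_N\in\R$, and note $V_\lambda(x)-V_\lambda(x+z)=U_\lambda(x_N)-U_\lambda(x_N+z_N)$ depends only on $z_N$. Using Fubini and the change of variables $z'\mapsto |x_N+z_N - \cdot|$... more precisely, integrating first in $z'$, one has, for each fixed $z_N$,
\begin{equation*}
\int_{\R^{N-1}} \frac{dz'}{(|z'|^2+z_N^2)^{(N+sp)/2}} = \frac{c_{N,s,p}}{|z_N|^{1+sp}},
\end{equation*}
where $c_{N,s,p}=\int_{\R^{N-1}}(|w|^2+1)^{-(N+sp)/2}\,dw<\infty$ (finite because $N+sp>N-1$). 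Substituting this identity converts the $N$-dimensional principal value integral into the one-dimensional one, up to the constant $c_{N,s,p}$ and the error from the fact that the excised ball $B_\epsilon(0)\subset\R^N$ does not slice to the excised interval $(-\epsilon,\epsilon)$ in $\R$ exactly; that discrepancy is an integral over a thin lens-shaped region and is $O(\epsilon^{\text{something}})\to 0$, exactly as the term $\mathcal{P}_4$ was handled in the proof of Theorem \ref{thest}. Passing to the limit $\epsilon\to 0$ and invoking \eqref{est1} then yields
\begin{equation*}
c_{N,s,p}\,C_1\,(x_N+\lambda^{1/\alpha})^{-\beta} \leq \lim_{\epsilon\to 0}\int_{(B_\epsilon(0))^c}\frac{[V_\lambda(x)-V_\lambda(x+z)]^{p-1}}{|z|^{N+sp}}\,dz \leq c_{N,s,p}\,C_2\,(x_N+\lambda^{1/\alpha})^{-\beta},
\end{equation*}
and combining with the matrix-norm comparison gives \eqref{est5} with $C_3 = \|A\|_2^{-(N+sp)}c_{N,s,p}C_1$ and $C_4=\|A^{-1}\|_2^{N+sp}c_{N,s,p}C_2$ (or the analogous explicit constants).

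For the regularity claim $V_\lambda\in\overline{W}^{s,p}_{loc}(\R^N_+)$, I would again slice: on a bounded cube $Q=I\times Q'$ with $\bar I\subset(-\lambda^{1/\alpha},\infty)$ (when $\lambda=0$, $\bar I\subset(0,\infty)$ only if we stay away from the origin — here we need $\alpha\in(s-1/p,s)$ precisely as in Theorem \ref{thest}), split the Gagliardo integral over $Q\times Q$ according to whether the two points differ mainly in the $x_N$-direction or the transverse directions. The transverse part is controlled because $V_\lambda$ is Lipschitz in $x'$ on $Q$ with constant bounded by $\sup_Q |U_\lambda'|$, so its $W^{s,p}$ seminorm in those variables is finite on the bounded set $Q$; the $x_N$-direction part reduces to the one-dimensional computation \eqref{sp} after integrating the bounded transverse variables. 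The tail condition $\int_{\R^N}|V_\lambda(x)|^{p-1}(1+|x|)^{-N-sp}\,dx<\infty$ follows because $V_\lambda(x)=U_\lambda(x_N)\sim |x_N|^\alpha$ grows polynomially with exponent $\alpha(p-1)<s(p-1)<N+sp$ and is supported where $x_N\geq -\lambda^{1/\alpha}$. The main obstacle I anticipate is making the $\epsilon\to 0$ passage rigorous: one must check that the ``lens'' error between the $N$-dimensional excision $B_\epsilon(0)$ and the product excision $(-\epsilon,\epsilon)\times\R^{N-1}$ vanishes, and that the slicing identity $\int_{\R^{N-1}}(|z'|^2+z_N^2)^{-(N+sp)/2}dz' = c_{N,s,p}|z_N|^{-1-sp}$ interacts correctly with the principal value — but since $U_\lambda$ is $C^1$ away from $-\lambda^{1/\alpha}$, a symmetrization of the inner $z_N$-integral near the singularity $z_N=0$ (as in the definition of $(-\Delta)^s_p$) makes this routine, mirroring exactly the structure already used for $\mathcal{P}_\epsilon$ in Theorem \ref{thest}.
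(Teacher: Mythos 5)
There is a genuine gap at the very first step, and it is fatal as written. You propose to replace the kernel $|Az|^{-N-sp}$ by $|z|^{-N-sp}$ via the pointwise bounds coming from $\|A\|_2$, $\|A^{-1}\|_2$, asserting that ``no cancellation is lost at this step.'' The opposite is true. The integrand $[V_\lambda(x)-V_\lambda(x+z)]^{p-1}$ is nonnegative on $\{z_N<0\}$ and nonpositive on $\{z_N>0\}$, so a two-sided pointwise bound on the kernel necessarily gives an \emph{upper} multiplicative constant on one half-space and a \emph{lower} one on the other. That is only useful if the positive and negative parts of the principal-value integral converge separately — but they do not: near $z=0$ the integrand behaves like $-c\,[z_N]^{p-1}/|Az|^{N+sp}$, and each half-space contribution over $B_1\setminus B_\epsilon$ scales like $\int_\epsilon^1 r^{p-2-sp}\,dr$, which diverges whenever $p(1-s)\le 1$. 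The finiteness of $\lim_{\epsilon\to0}\mathcal J_{\epsilon,A}$ relies precisely on the odd-in-$z$ cancellation between those two pieces, and that cancellation is destroyed the moment you insert different constants on the two sides. (Incidentally, your inequalities have the roles of $\|A\|_2$ and $\|A^{-1}\|_2$ reversed, but that is cosmetic; the structural problem above is what kills the step.)

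The paper avoids this entirely by never bounding the kernel: it performs an exact change to elliptic coordinates $z=\rho A^{-1}w$ with $w\in AS^{N-1}$, under which $|Az|^{N+sp}=(\rho|w|)^{N+sp}$ factors exactly; the radial integral then becomes the one-dimensional quantity $\mathcal P_{(e_A\cdot w)\epsilon}(x_N)$ from Theorem~\ref{thest}, and the angular integral $\int_{\mathcal E\cap E_A}|e_A\cdot w|^{sp}|w|^{-N-sp}\,dw$ is a finite constant depending on $A$. Your Fubini-slicing idea can in fact be repaired along similar lines: if you slice \emph{with the original kernel}, i.e.\ compute $\int_{\R^{N-1}} |A(z',z_N)|^{-(N+sp)}\,dz'$, the scaling $z'\mapsto |z_N|w$ produces $c_A(\mathrm{sgn}\,z_N)\,|z_N|^{-1-sp}$, and crucially $c_A(+1)=c_A(-1)$ because $|A(-z)|=|Az|$ — so the reduction to Theorem~\ref{thest} goes through with a single constant. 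But that is a different argument than the one you wrote: the matrix-norm normalization has to be dropped, not merely made rigorous. The remaining points you flag (the ``lens'' discrepancy between the $N$-dimensional ball excision and the slab excision, handled by the odd-part symmetrization as for $\mathcal P_4$; the $W^{s,p}_{loc}$ membership reducing to~\eqref{sp} after integrating out the bounded transverse variables) are sound and consistent with how the paper closes the proof.
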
 
\begin{proof}
As in the proof of Lemma 3.2 in \cite{IMS}, we define the elliptic coordinates for any $y\in \R^N\setminus \{0\}$ as $y= \rho w$ where  $\rho >0$ and $w \in \mathcal E:=AS^{N-1}$. Hence we have $dy= \rho^{N-1} d\rho dw$ where $dw$ is the surface of $\mathcal E$.
We also define $e_A= ^t\!\!(A^{-1}) e_N$ and $E_A= \{x \in \mathbb{R}^N: x \cdot e_A > 0\}$ then we have
$$e_A\cdot w = (A^{-1} w)_N,\  \ \forall \ w \in \mathcal E .$$ 
Let $x\in \R^N_+$, by the change of variable $z= \rho A^{-1}  w$:
\begin{equation*}
\begin{split}
\mathcal J_{\epsilon, A}(x)
&= |\det A|^{-1} \int_{\mathcal E} \frac{1}{|w|^{N+sp}} \int_\epsilon^\infty \frac{[U_\lambda(x_N)-U_\lambda(x_N+ \rho(e_A\cdot w) )]^{p-1}}{|\rho|^{1+sp}} ~d\rho ~dw \\
& = |\det A|^{-1} \left( \int_{\mathcal E \cap E_A} \int_\epsilon^\infty + \int_{\mathcal E\cap (E_A)^c} \int_\epsilon^\infty  \right).
\end{split}
\end{equation*}
Replacing $\rho$ and $w$ by $-\rho$ and $-w$ in the second integral in the right-hand side and noting $-w \in \mathcal E \cap E_A$, we get
\begin{equation*}
\mathcal J_{\epsilon, A}(x)= |\det  A|^{-1} \int_{\mathcal E \cap E_A} \frac{1}{|w|^{N+sp}}\int_{(-\epsilon, \epsilon)^c} \frac{[U_\lambda(x_N)-U_\lambda(x_N+ \rho(e_A\cdot w) )]^{p-1}}{|\rho|^{1+sp}} ~d\rho ~dw . 
\end{equation*}
Now, the new change of variable $t=x_N +\rho(e_A \cdot w)$ yields in $\mathcal J_{\epsilon,A}$:
\begin{equation*}
\begin{split}
\mathcal J_{\epsilon, A}(x)
&= (x_N+ \lambda^{1/\alpha})^{-\beta}|\det A|^{-1} \int_{\mathcal E \cap E_A} \frac{|e_A \cdot w|^{sp}}{|w|^{N+sp}} \mathcal{P}_{(e_A \cdot w) \epsilon}(x_N) ~dw.
\end{split}
\end{equation*}
Noting that
\begin{equation}\label{est6}
|\det A|^{-1} \int_{\mathcal E \cap E_A} \frac{|e_A \cdot w|^{sp}}{|w|^{N+sp}} dw= \frac{1}{2} \int_{S^{N-1}} \frac{|e_N \cdot v|^{sp}}{|Av|^{N+sp}} dv<\infty,
\end{equation} 
we obtain \eqref{est5} passing to the limit $\epsilon\to 0$ and using Theorem \ref{thest}.\\
Finally, the assertion $V_\lambda \in \overline{W}^{s,p}_{loc}(\mathbb{R}^N_+)$ follows showing $V_\lambda \in \overline{W}^{s,p}(K)$ for any bounded set $K \Subset \mathbb{R}^N_+$
and using the computations in \eqref{sp}.
\end{proof}
\noindent For the next estimate, we prove the following lemma:
\begin{Lem}\label{calint}
Let $r \in \mathbb{R}_+$ and for any $\vartheta\in (0,\min\{1,r\})$, we define
$$\mathcal H_\vartheta(r)=\int_{ (-\vartheta, \vartheta)^c\cap (-1,1)}  \frac{|U_\lambda(r)-U_\lambda(r+ t)|^{p-1}}{|
t|^{sp}} ~dt.$$
Then, there exists a constant $C>0$ depending upon $\alpha,\,s$ and $p$ such that for any $r>0$ and $\vartheta\in (0,\min\{1,r\})$
\begin{equation}\label{est-new8}
  \mathcal H_{\vartheta}(r) \leq C (r + \lambda^{1/\alpha})^{-\beta}((r + \lambda^{1/\alpha})^s+  (r + \lambda^{1/\alpha})+(r + \lambda^{1/\alpha})^{\beta}).
\end{equation}
\end{Lem}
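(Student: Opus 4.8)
\noindent\textbf{Proposed proof of Lemma \ref{calint}.}
Write $m:=\lambda^{1/\alpha}\ge 0$ and $\rho:=r+m>0$, so that $U_\lambda(r)=\rho^\alpha$ and, for $t\in(-1,1)$, $U_\lambda(r+t)=((\rho+t)^+)^\alpha$. The plan is to split $\mathcal H_\vartheta(r)$ according to the sign of $t$, rescale each piece by $t=\rho\tau$, and thereby reduce the estimate to two fixed (i.e. $\rho$-independent) one-dimensional integrals whose behaviour near $0$ and near $\infty$ is governed by the elementary convexity/concavity of $x\mapsto x^\alpha$.

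First I would write $\mathcal H_\vartheta(r)=I_+ +I_-$, where
\[
I_+=\int_\vartheta^1\frac{\big((\rho+t)^\alpha-\rho^\alpha\big)^{p-1}}{t^{sp}}\,dt,\qquad
I_-=\int_\vartheta^1\frac{\big(\rho^\alpha-((\rho-t)^+)^\alpha\big)^{p-1}}{t^{sp}}\,dt,
\]
the term $I_-$ coming from the integral over $(-1,-\vartheta]$ after the change $t\mapsto-t$ and using monotonicity of $U_\lambda$ to remove the absolute values (note $(\rho-t)^+$ vanishes when $t\ge\rho$, which can occur only if $\rho<1$). The substitution $t=\rho\tau$ together with the identity $\alpha(p-1)-sp+1=1-\beta$ then gives
\[
I_\pm=\rho^{\,1-\beta}\int_{\vartheta/\rho}^{1/\rho}g_\pm(\tau)\,d\tau,\qquad
g_+(\tau):=\frac{((1+\tau)^\alpha-1)^{p-1}}{\tau^{sp}},\qquad
g_-(\tau):=\frac{(1-((1-\tau)^+)^\alpha)^{p-1}}{\tau^{sp}}.
\]

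Next I would record the pointwise bounds. Since $0<\alpha<1$, concavity of $x\mapsto x^\alpha$ yields $(1+\tau)^\alpha-1\le\alpha\tau$ for all $\tau\ge0$ and $(1+\tau)^\alpha-1\le 2\tau^\alpha$ for $\tau\ge1$; convexity of $\tau\mapsto1-(1-\tau)^\alpha$ on $[0,1)$ (its derivative $\alpha(1-\tau)^{\alpha-1}$ is increasing) gives $1-((1-\tau)^+)^\alpha\le\tau$ for $\tau\in[0,1]$, while $g_-(\tau)=\tau^{-sp}$ for $\tau\ge1$. Hence $0\le g_\pm(\tau)\le C\tau^{\,p-1-sp}$ on $(0,1]$ and $0\le g_\pm(\tau)\le C\tau^{-\beta}$ on $[1,\infty)$ (for $g_-$ using $\beta\le sp$). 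Because $p-1-sp>-1$ (as $p(1-s)>0$), the function $\tau^{\,p-1-sp}$ is integrable at the origin, so the lower cut-off $\vartheta/\rho$ is immaterial and I may estimate $\int_{\vartheta/\rho}^{1/\rho}\le\int_0^{1/\rho}$. If $\rho\ge1$ this gives $\int_0^{1/\rho}g_\pm\le\int_0^1 g_\pm\le C$, hence $I_\pm\le C\rho^{\,1-\beta}$; if $\rho<1$ it gives $\int_0^{1/\rho}g_\pm\le C+C\int_1^{1/\rho}\tau^{-\beta}\,d\tau$, and evaluating the last integral separately in the three cases $\beta<1$, $\beta=1$, $\beta>1$ yields $I_\pm\le C\big(1+\rho^{\,1-\beta}+\rho^{\,s-\beta}\big)$. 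Since for $\rho\ge1$ one trivially has $\rho^{\,1-\beta}\le 1+\rho^{\,1-\beta}+\rho^{\,s-\beta}$, both ranges combine to
\[
\mathcal H_\vartheta(r)=I_++I_-\le C\big(\rho^{\,s-\beta}+\rho^{\,1-\beta}+1\big)=C\,\rho^{-\beta}\big(\rho^{s}+\rho+\rho^{\beta}\big),
\]
which is \eqref{est-new8} with $\rho=r+\lambda^{1/\alpha}$.

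The case distinction on $\beta$ is routine bookkeeping with power-function integrals, and the asymptotics of $g_\pm$ reduce to the two elementary inequalities above. The one point that genuinely needs care, and which I expect to be the main obstacle, is the critical exponent $\beta=1$: there $\int_1^{1/\rho}\tau^{-1}\,d\tau=\ln(1/\rho)$, a logarithmic divergence that must be absorbed into the strictly negative power $\rho^{\,s-\beta}=\rho^{\,s-1}$ via $\sup_{(0,1]}\rho^{\,1-s}\ln(1/\rho)<\infty$. This is exactly where the standing hypothesis $\alpha<s$ (equivalently $s-\beta=-(p-1)(s-\alpha)<0$) enters.
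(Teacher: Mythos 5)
Your proof is correct and follows essentially the same route as the paper's: split the integral according to the sign of $t$, rescale by $\rho=r+\lambda^{1/\alpha}$ to produce $\rho^{1-\beta}\int g_\pm$, and control the rescaled kernel using elementary concavity/convexity of $x\mapsto x^\alpha$. The only (cosmetic) difference is that the paper uses the sharper pointwise bound $1-t^\alpha\leq 1-t^s\leq(1-t)^s$, giving a $\tau^{-s}$ tail whose integration never hits a critical exponent, whereas your weaker $g_\pm(\tau)\lesssim\tau^{-\beta}$ tail forces the extra case split at $\beta=1$ — which you correctly absorb via $\rho^{1-s}\ln(1/\rho)\lesssim 1$, so nothing is lost.
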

\begin{proof}
As previously, to estimate $\mathcal H_{\vartheta}$, we split the integral as follows 
\begin{equation}\label{est-new4}
    \begin{split}
        \mathcal H_{\vartheta}(r) &=   \int_{r-1}
        ^{r-\vartheta} \frac{|U_\lambda(r)-U_\lambda(t)|^{p-1}}{|r-t|^{sp}} ~dt 
        +  \int_{r + \vartheta}^{r+1}  \frac{|U_\lambda(r)-U_\lambda(t)|^{p-1}}{|r-t|^{sp}} ~dt\\
        &= \mathcal H_{\vartheta,1}(r) + \mathcal H_{\vartheta,2}(r). 
    \end{split}
\end{equation}
For $\mathcal H_{\vartheta,1}$, we consider two cases: for $r\leq 1-\lambda^{1/\alpha}$, we have \begin{equation*}
    \mathcal H_{\vartheta,1}(r)= \int_{r-1}^{-\lambda^{1/\alpha}} \frac{|U_\lambda(r)|^{p-1}}{|r-t|^{sp}} ~dt+\int_{-\lambda^{1/\alpha}}         ^{r-\vartheta} \frac{|U_\lambda(r)-U_\lambda(t)|^{p-1}}{|r-t|^{sp}} ~dt .
\end{equation*}
Hence the first term in the right-hand side is bounded by 
\begin{equation}\label{est-new5}
    \left\{
    \begin{array}{l l}
       \frac{1}{sp-1}(r + \lambda^{1/\alpha})^{1-\beta} &\text{if} \ sp >1,\\
       C(\alpha,s,p)
       &\text{if} \ sp \leq 1.\\
    \end{array}
    \right.
\end{equation}
Using a change of variable in the second term of the right-hand side and for any $t\in(0,1)$,\break $1-t^\alpha\leq1-t^s\leq (1-t)^s$, we get
\begin{equation}\label{est-new6}
    \begin{split}
       (r + \lambda^{1/\alpha})^{1-\beta} \int_{0}^{1- \frac{\vartheta}{r+ \lambda^{1/\alpha}}} \frac{(1-t^{\alpha})^{p-1}}{(1-t)^{sp}} ~dt &\leq (r + \lambda^{1/\alpha})^{1-\beta} \int_{0}^{1- \frac{\vartheta}{r+ \lambda^{1/\alpha}}}  (1-t)^{-s} ~dt\\
       &\leq \frac{1}{1-s}(r + \lambda^{1/\alpha})^{1-\beta}.
    \end{split}
\end{equation}
For $r> 1-\lambda^{1/\alpha}$, we have
\begin{equation*}
    \mathcal H_{\vartheta,1}(r)\leq \int_{-\lambda^{1/\alpha}}^{r-\vartheta} \frac{|U_\lambda(r)-U_\lambda(t)|^{p-1}}{|r-t|^{sp}} ~dt\leq \frac{1}{1-s}(r + \lambda^{1/\alpha})^{1-\beta}.
\end{equation*}
In the same way for $\mathcal H_{\vartheta,2}$, since for any $t\geq 1$, $t^\alpha-1\leq t^s-1\leq (t-1)^s$, we get:
\begin{equation}\label{est-new7}
    \begin{split}
       \mathcal H_{\vartheta,2}(r) &\leq (r + \lambda^{1/\alpha})^{1-\beta} \int_{1+ \frac{\vartheta}{r + \lambda^{1/\alpha}}}^ {1+ \frac{1}{r + \lambda^{1/\alpha}}} \frac{(t^{\alpha}-1)^{p-1}}{(t-1)^{sp}} ~dt\\
       & \leq (r+ \lambda^{1/\alpha})^{1-\beta} \int_{1+ \frac{\vartheta}{r + \lambda^{1/\alpha}}}^ {1+ \frac{1}{r + \lambda^{1/\alpha}}} (t-1)^{-s} ~dt \leq \frac{1}{1-s}(r + \lambda^{1/\alpha})^{s-\beta}.
    \end{split}
\end{equation}
Then, by collecting the estimates \eqref{est-new5}-\eqref{est-new7}, we obtain  \eqref{est-new8}.
\end{proof}
\noindent Now the next result gives the corresponding estimates of $(-\Delta)^s_p \left((x_N+ \lambda^{1/\alpha})^+\right)^\alpha$ under the smooth change of coordinates. 
\begin{thm}\label{thpr}
Let $\alpha\in(0,s)$ and $p>1$. Let $\psi :\mathbb{R}^N \to \mathbb{R}^N$ be a  $C^{1,1}$-diffeomorphism 
in $\R^N$ such that $\psi= Id$ in $B_R(0)^c$, for some $R>0$. \\
Then, considering  $W_\lambda(x)=U_\lambda(\psi^{-1}(x) \cdot e_N)$, there exist $\rho^*=\rho^*(\psi)>0$ and $\lambda^*=\lambda^*(\psi)>0$ such that for any $\rho\in(0,\rho^*)$, there exists a constant $\tilde{C}>0$ independent of $\lambda$ such that, for any $\lambda\in {[0,\lambda^*]}$,
\begin{equation}\label{core}
\frac{1}{\tilde C} W_\lambda(x)^{-\frac\beta\alpha} \leq (-\Delta)^s_p W_\lambda(x) \leq \tilde{C} W_\lambda(x)^{-\frac\beta\alpha} \quad  \mbox{E-weakly in $\psi(\{X:0< X_N < \rho\})$}.
\end{equation}
\end{thm}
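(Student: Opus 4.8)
The strategy is to transfer the pointwise estimates of Corollary \ref{esthm} for $V_\lambda(x)=U_\lambda(x_N)$ from the half-space $\mathbb{R}^N_+$ to the distorted region $\psi(\{0<X_N<\rho\})$, exploiting that $\psi$ is $C^{1,1}$ and equals the identity outside $B_R(0)$. Write $W_\lambda=V_\lambda\circ\psi^{-1}$ and fix $x=\psi(X)$ with $0<X_N<\rho$. By the change of variable $y=\psi(X+h)$ in the defining integral for $(-\Delta)^s_p W_\lambda(x)$, one gets
\begin{equation*}
(-\Delta)^s_p W_\lambda(x)=2\,\mathrm{P.V.}\!\int_{\mathbb{R}^N}\frac{[V_\lambda(X)-V_\lambda(X+h)]^{p-1}}{|\psi(X+h)-\psi(X)|^{N+sp}}\,|\det D\psi(X+h)|\,dh.
\end{equation*}
The idea is to compare this with $\mathcal J_{\epsilon,A}(X)$ from Corollary \ref{esthm}, taking $A=D\psi(X)$: since $\psi\in C^{1,1}$, we have $\psi(X+h)-\psi(X)=D\psi(X)h+O(|h|^2)$, so $|\psi(X+h)-\psi(X)|=|Ah|(1+O(|h|))$ for $|h|$ small, and $|\det D\psi(X+h)|=|\det A|(1+O(|h|))$. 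Hence the "short-range" part of the integral, over $|h|<r_0$ for a small fixed $r_0$ depending only on the $C^{1,1}$ norm of $\psi$, is pinched between $(1\pm C r_0)$ times the corresponding part of $\mathcal J_{0,A}(X)$, which by Corollary \ref{esthm} is comparable to $(X_N+\lambda^{1/\alpha})^{-\beta}$ with constants depending only on $\|D\psi\|$, $\|(D\psi)^{-1}\|$ (uniformly in $X$), $\alpha,s,p,N$.

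The remaining steps handle the "long-range" part $|h|\ge r_0$ and the removal of the principal value. On $|h|\ge r_0$ the kernel $|\psi(X+h)-\psi(X)|^{-(N+sp)}$ is harmless — it is bounded above and below by a constant times $(1+|h|)^{-(N+sp)}$ because $\psi$ is a bi-Lipschitz global diffeomorphism that is the identity near infinity — so this tail is bounded by $C\,\mathcal H_{\vartheta}$-type quantities; here I would invoke Lemma \ref{calint} (after splitting off the directions and integrating in the radial variable as in the proof of Corollary \ref{esthm}) to bound the tail by $C(X_N+\lambda^{1/\alpha})^{-\beta}\big((X_N+\lambda^{1/\alpha})^s+(X_N+\lambda^{1/\alpha})+(X_N+\lambda^{1/\alpha})^{\beta}\big)$. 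Choosing $\rho^*$ (hence $X_N$) and $\lambda^*$ small enough, the factor $(X_N+\lambda^{1/\alpha})^s+(X_N+\lambda^{1/\alpha})+(X_N+\lambda^{1/\alpha})^{\beta}$ stays bounded, so the tail is absorbed into $C(X_N+\lambda^{1/\alpha})^{-\beta}$ and, crucially, its sign does not destroy the lower bound once $r_0$ is taken small: the short-range part dominates. Finally, since $U_\lambda\in\overline{W}^{s,p}_{loc}$ (Theorem \ref{thest}, Corollary \ref{esthm}) and $\psi$ is a $C^{1,1}$ diffeomorphism equal to the identity outside a compact set, $W_\lambda\in\overline{W}^{s,p}_{loc}(\psi(\mathbb{R}^N_+))$, so the pointwise estimates upgrade to E-weak inequalities on $\psi(\{0<X_N<\rho\})$ by testing against nonnegative $\phi\in W_0^{s,p}$ and using Fubini (the pointwise bounds give integrability of the double integral). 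To conclude \eqref{core} one rewrites $(X_N+\lambda^{1/\alpha})^{-\beta}$ in terms of $W_\lambda(x)$: since $W_\lambda(x)=U_\lambda(X_N)=(X_N+\lambda^{1/\alpha})^\alpha$ on this region, $(X_N+\lambda^{1/\alpha})^{-\beta}=W_\lambda(x)^{-\beta/\alpha}$, giving exactly the claimed form with $\tilde C$ depending on $\psi$ through $\sup\|D\psi\|,\sup\|(D\psi)^{-1}\|$ and the Lipschitz constant of $D\psi$.

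**Main obstacle.** The delicate point is the lower bound: the error terms coming from the nonlinearity of $\psi$ (the $O(|h|^2)$ in $\psi(X+h)-\psi(X)$ and the $O(|h|)$ in the Jacobian) must be controlled so that they perturb, rather than overwhelm, the positive principal term $C_3(X_N+\lambda^{1/\alpha})^{-\beta}$. Because the integrand $[V_\lambda(X)-V_\lambda(X+h)]^{p-1}$ changes sign, one cannot simply bound $|(-\Delta)^s_p W_\lambda - \mathcal J_{0,A}|$ crudely; instead one must split into the near-diagonal region (where the sign is controlled and the relative perturbation of the kernel is $O(r_0)$, uniformly, thanks to $C^{1,1}$) and the far region (bounded in absolute value via Lemma \ref{calint}), then choose the two small parameters $r_0$ and $\rho^*$ in the right order so that near-diagonal term $\times(1-Cr_0)$ beats far term. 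Making the constants genuinely independent of $\lambda\in[0,\lambda^*]$ and of $X$ in the slab — which is what is needed for the later barrier construction — is the bookkeeping that requires care.
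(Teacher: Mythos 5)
Your overall architecture matches the paper's: pull back to the half-space by the change of variable $y=\psi(Y)$, compare with $\mathcal J_{\epsilon,A_X}(X)$ for $A_X=D\psi(X)$, control the error so that it is absorbed into the lower bound for $X_N$ and $\lambda$ small, and finally upgrade pointwise estimates to E-weak ones. However, there is a genuine gap in your treatment of the near-diagonal region. You claim the short-range part (over $|h|<r_0$) is ``pinched between $(1\pm Cr_0)$ times the corresponding part of $\mathcal J_{0,A}(X)$'' and later that ``the sign is controlled'' near the diagonal. Neither holds: the integrand $[V_\lambda(X)-V_\lambda(X+h)]^{p-1}$ changes sign on every small ball around $h=0$ (it flips sign as $h_N$ crosses $0$), so for a multiplicative perturbation $1+g(h)$ with $\|g\|_\infty\le Cr_0$ the identity $\int f(1+g)=\int f+\int fg$ only gives $|\int fg|\le Cr_0\int |f|$, and the principal value $\int |f|$ is precisely the thing that diverges. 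Multiplicative pinching of an oscillating P.V.~integral does not give you the lower bound, and ``sign control'' near the diagonal is simply false.

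What rescues the argument --- and what the paper does, following [IMS, Lemma~3.4] --- is an \emph{exact additive decomposition}, not a multiplicative comparison. After the change of variables one writes
$H_\epsilon(x)=|\det A_X|\,\mathcal J_{\epsilon,A_X}(X)+\int \tfrac{[U_\lambda(X_N)-U_\lambda(Y_N)]^{p-1}}{|A_X(X-Y)|^{N+sp}}\,h(X,Y)\,dY$
where $h(X,Y)=\tfrac{|A_X(X-Y)|^{N+sp}}{|\psi(X)-\psi(Y)|^{N+sp}}|\det A_Y|-|\det A_X|$ satisfies $|h(X,Y)|\le C_\psi\min\{|X-Y|,1\}$. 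The extra factor of $|X-Y|$ lowers the singularity of the error kernel from $|X-Y|^{-(N+sp)}$ to $|X-Y|^{-(N+sp-1)}$, which \emph{is} absolutely integrable against the H\"older numerator --- so, contrary to what you assert, you \emph{can} bound the error term ``crudely'' in absolute value. In polar coordinates this error over $|X-Y|<1$ is controlled by Lemma \ref{calint} (your attribution of Lemma \ref{calint} to the ``far region'' is reversed: the far region $|X-Y|\ge 1$ is handled by the $\alpha$-H\"older regularity of $U_\lambda$ alone, giving a bounded constant). The resulting bound is $|H_{\epsilon,2}(X)|\lesssim 1+(X_N+\lambda^{1/\alpha})^{s-\beta}$, which has strictly lower order than the principal contribution $(X_N+\lambda^{1/\alpha})^{-\beta}$ as $X_N+\lambda^{1/\alpha}\to 0$; it is this gain of $s$ (or $\beta$) in the exponent --- not merely ``staying bounded,'' as you wrote --- that allows the error to be absorbed once $\rho^*$ and $\lambda^*$ are small. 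Finally, the pointwise-to-E-weak upgrade is not a simple Fubini argument: one needs Lemma \ref{equi:stro-weak}, which in turn requires showing the truncated integrals $H_\epsilon$ (with the nonstandard exhaustion $D_\epsilon(x)=\{y:|\psi^{-1}(x)-\psi^{-1}(y)|>\epsilon\}$) converge in $L^1_{loc}$, and this is where the uniform-in-$\epsilon$ bounds above are actually used.
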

\begin{proof}
Define, for any $x\in \psi(\R^N_+)$, $H(x)=2\lim_{\epsilon \to 0} H_\epsilon (x)$ where for $\epsilon>0$,  \begin{equation}\label{est701}
H_\epsilon(x)= \int_{D_\epsilon(x)} \frac{[W_\lambda(x)-W_\lambda(y)]^{p-1}}{|x-y|^{N+sp}} ~dy
\end{equation}
and $D_\epsilon(x)=  \{y\in \R^N:\ |\psi^{-1}(x)-\psi^{-1}(y)|>\epsilon \}$. \\
By change of variable, with the notations $x=\psi(X)$ and $A_X=D\psi(X)$,  we have:
\begin{equation*}
\begin{split}
    H_\epsilon(x)&= |\det A_X|\mathcal J_{\epsilon, A_X}(X)+\int_{(B_\epsilon(X))^c}\frac{[U_\lambda(X_N)-U_\lambda(Y_N)]^{p-1}}{|A_X(X-Y)|^{N+ps}}h(X,Y)~dY\\
    &= H_{\epsilon,1}(X) + H_{\epsilon,2}(X) 
    \end{split}
\end{equation*}
where, by Lemma 3.4 in \cite{IMS}, there exists a constant $C_\psi$ such that
\begin{equation*}
    \begin{split}
        |h(X,Y)|&=\left| \frac{|A_X(X-Y)|^{N+ps}}{|\psi(X)-\psi(Y)|^{N+sp}}|\det A_Y|-|\det A_X|\right|\\
        &\leq C_\psi\min\{|X-Y|,1\}.
    \end{split}
\end{equation*}
In order to apply Lemma \ref{equi:stro-weak}, first we prove uniform estimates of $H_\epsilon$ on compact set of $\psi (\R^N_+)$.
Since $\psi$ is a $C^{1,1}-$ diffeomorphism such that  $\psi= Id$ in $B_R(0)^c$ for some $R>0$  therefore the mappings $X\mapsto  |\det D\psi(X))|$ and $X\mapsto  \| D\psi(X))\|_\infty$ are bounded on $\R^N$. More precisely, there exists a constant $c_\psi>0$ such that for any $X \in \mathbb{R}^N$
\begin{equation}\label{est8}
\frac{1}{c_\psi} \leq |det D\psi(X)| \leq c_\psi \quad \mbox{ and } \quad \frac{1}{c_\psi} \leq \| D\psi(X))\|_\infty \leq c_\psi.
\end{equation}
Hence plugging \eqref{est6} and \eqref{est8}, we obtain $H_{\epsilon,1}$ is bounded in $\R^N$.
Now, we give an estimate of $ H_{\epsilon,2}$ {in $\{X \in \mathbb{R}^N_+ :0<X_N<1\}$}: 
\begin{equation}\label{est-new1}
\begin{split}
| H_{\epsilon,2}(X)| 
\leq&\ C_\psi \bigg( \int_{B_1(X) \setminus B_\epsilon(X) } \frac{|U_\lambda(X_N)-U_\lambda(Y_N)|^{p-1} |X-Y|}{|
A_X(X-Y)|^{N+sp}} ~dY \\
& + \int_{(B_1(X))^c} \frac{|U_\lambda(X_N)-U_\lambda(Y_N)|^{p-1}}{|A_X(X-Y)|^{N+sp}} ~dY  \bigg)\\
=&\ C_\psi\left( H_{\epsilon,2}^{\star}(X) + H_{\epsilon,2}^{\diamond}(X) \right).
\end{split}
\end{equation}
First, by H\"older regularity of the mapping $x\mapsto x^\alpha$, we have for any $X\in \R^N_+$:
\begin{equation}\label{est-new2}
    H_{\epsilon,2}^{\diamond}(X) \leq  C_\psi\int_{1}^\infty  \frac{1}{t^{1+\beta}} ~dt \leq C_\psi.
\end{equation}
For the first term, using  polar coordinates $Y =X+ \sigma w$ for $w \in S^{N-1}$, $\sigma >0$, {$X \in \mathbb{R}^N_+$ and by choosing $\epsilon <X_N$}, we obtain from \eqref{est8}
\begin{equation}\label{est-new3}
\begin{split}
    H_{\epsilon,2}^{\star}(X) 
&\leq  c_\psi \int_{S^{N-1}} \frac{1}{|w|^{N+sp-1}} \int_{\epsilon}^1  \frac{|U_\lambda(X_N)-U_\lambda(X_N+ \sigma w_N)|^{p-1}}{|
\sigma|^{sp}} ~d\sigma ~dw\\
&=  c_\psi \int_{S^{N-1} \cap \{w_N >0\}} |w|^{-N} \int_{ (-\epsilon w_N, \epsilon w_N)^c \cap(-w_N,w_N)}  \frac{|U_\lambda(X_N)-U_\lambda(X_N+ t)|^{p-1}}{|
t|^{sp}} ~dt ~dw\\
& \leq  c_\psi \int_{S^{N-1} \cap \{w_N >0\}} \int_{ (-\epsilon w_N, \epsilon w_N)^c\cap (-1,1)}  \frac{|U_\lambda(X_N)-U_\lambda(X_N+ t)|^{p-1}}{|
t|^{sp}} ~dt ~dw\\
& =  c_\psi \int_{S^{N-1} \cap \{w_N >0\}} \mathcal {H}_{\epsilon w_N}(X_N) ~dw
\end{split}
\end{equation}
where the function $\mathcal H_{\epsilon w_N}$ is defined in Lemma \ref{calint}.\\
From \eqref{est-new8}, we deduce that $ H^\star_{\epsilon,2}$ and thus $H_{\epsilon,2}$ are bounded on compact sets of $\R^N_+$. Hence, $H_\epsilon$ converges to $\frac12 H$ in $L^1_{loc}(\psi(\R^N_+))$ and we apply Lemma \ref{equi:stro-weak} which implies that $W_\lambda$ satisfies  $(-\Delta)_p^s W_\lambda= H$ E-weakly in $\psi(\R^N_+)$.\\
Furthermore, since \eqref{est-new8} is independent of $\vartheta$, then gathering \eqref{est-new2}, \eqref{est-new3}, \eqref{est-new8} in \eqref{est-new1}, there exist $\lambda^*$ and $\rho^*$ small enough, for any $\lambda\leq\lambda^*$ and $\rho\leq \rho^*$, there exists a constant $\tilde C$ independent of $\lambda$ and $\epsilon$ such that for any $X \in \{X:0< X_N < \rho\}$:
\begin{equation}\label{est-new9}
  |H_{\epsilon,2}(X)| \leq \tilde{C} (1+ (X_N + \lambda^{1/\alpha})^{s-\beta}) \leq \frac{C_3}{2c_\psi}  (X_N + \lambda^{1/\alpha})^{-\beta}
\end{equation}
where $C_3$ is defined in \eqref{est5}.\\
Finally, by combining \eqref{est5}, \eqref{est8} and \eqref{est-new9}, there exists a constant   $\tilde{C}$ independent of $\lambda$ such that \begin{equation}\label{est-new0}
\frac{1}{\tilde{C}} (X_N + \lambda^{1/\alpha})^{-\beta} \leq \lim_{\epsilon\to0} H_\epsilon(x) \leq \tilde{C} (X_N + \lambda^{1/\alpha})^{-\beta}, \quad  \forall \ x \in \psi(\{X:0< X_N < \rho\})
\end{equation}
and we deduce \eqref{core}.
\end{proof}
\noindent We extend the definition of the function $d$ in $\Omega^c=\R^N\setminus \Omega$ as follows
\begin{equation}\label{ext:def}
 \begin{aligned}
d_e(x)=  \left\{
 \begin{array}{ll}
 \dist(x,\partial \Omega) & \mbox{ if } \ x\in \Omega;\\
- \dist(x, \partial\Omega)  &  \text{ if } \ x \in (\Omega^c)_{\lambda^{\frac1\alpha}};\\
 -\lambda^{1/\alpha} & \text{ otherwise}.\\
 \end{array} 
 \right.
 \end{aligned}
 \end{equation}
 where $(\Omega^c)_{\eta} = \{ x \in \Omega^c\ :\ \dist(x,\partial\Omega) < \eta \}$.
 Hence we define, for some $\rho>0$ and $\lambda >0$:
\begin{equation}\label{sub}
 \begin{aligned}
\underline w_\rho(x)=  \left\{
 \begin{array}{ll}
 (d_e(x)+ \lambda^{1/\alpha})_+^{\alpha}- \lambda  &  \text{if } \ x \in \Omega \cup (\Omega^c)_{\rho},\\
 { - \lambda} & \text{ otherwise} ,\\
 \end{array} 
 \right.
 \end{aligned}
 \end{equation}
 \begin{equation}\label{super}
 \begin{aligned}
 \overline w_\rho(x)=  \left\{
 \begin{array}{ll}
 (d_e(x)+ \lambda^{1/\alpha})_+^{\alpha}  &  \text{if } x \in \Omega \cup (\Omega^c)_{\rho}, \\
 0 & \text{otherwise}.\\
 \end{array} 
 \right.
 \end{aligned}
 \end{equation}

\begin{thm}\label{thq}
Let $\Omega \subset \mathbb{R}^N$ be a smooth bounded domain with a $C^{1,1}$ boundary and $ \alpha\in (0,s)$. Then, for some $\rho>0$, there exist $(\lambda_*,\eta_*)\in \R_*^+\times \R_*^+$ such that for any $\eta<\eta_*$, there exist positive constants $C_5, C_6$ 
such that for any $\lambda\in [0,\lambda_*]$:
\begin{equation}\label{mainest}
  (-\Delta)^s_p \overline w_\rho \geq C_5 (d(x) + \lambda^{1/\alpha})^{-\beta} \ \text{ and }\ (-\Delta)^s_p\underline w_\rho \leq C_6 (d(x) + \lambda^{1/\alpha})^{-\beta}\  \text{E-weakly in } \ \Omega_{{\eta}}
\end{equation}
where $\Omega_\eta = \{x \in \Omega: d(x) < \eta\}$. Moreover, for $\lambda >0$, $\underline w_\rho$, $\overline w_\rho$ belong to $\overline W^{s,p}(\Omega_\eta)$. 
\end{thm}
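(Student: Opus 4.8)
The plan is to transfer the half-space estimates of Theorem \ref{thpr} to the curved domain $\Omega$ via local $C^{1,1}$ flattening of the boundary, and then to patch the local pieces into a global E-weak inequality on the whole collar $\Omega_\eta$. First I would fix a boundary point $x_0\in\partial\Omega$ and, using the $C^{1,1}$ regularity of $\partial\Omega$, choose a neighborhood $U_{x_0}$ and a $C^{1,1}$-diffeomorphism $\psi_{x_0}$ of $\mathbb R^N$, equal to the identity outside a ball, which straightens $\partial\Omega\cap U_{x_0}$ onto $\{X_N=0\}$ and sends $\Omega\cap U_{x_0}$ into $\mathbb R^N_+$; one can arrange that in a smaller neighborhood the signed distance $d_e$ is precisely $\psi_{x_0}^{-1}(\cdot)\cdot e_N$, so that $\overline w_\rho$ and $\underline w_\rho$ locally coincide with $W_\lambda=U_\lambda(\psi_{x_0}^{-1}(\cdot)\cdot e_N)$ up to the additive constants $0$ and $-\lambda$ (which do not affect $(-\Delta)^s_p$). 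Theorem \ref{thpr} then provides $\rho^*,\lambda^*>0$ and a constant $\tilde C$ such that
\begin{equation*}
\frac{1}{\tilde C}W_\lambda^{-\beta/\alpha}\leq (-\Delta)^s_p W_\lambda\leq \tilde C\,W_\lambda^{-\beta/\alpha}\quad\text{E-weakly in }\psi_{x_0}(\{0<X_N<\rho\})
\end{equation*}
for all $\lambda\in[0,\lambda^*]$, and since $W_\lambda(x)=(d_e(x)+\lambda^{1/\alpha})^\alpha$ near $x_0$, the quantity $W_\lambda^{-\beta/\alpha}$ is comparable to $(d(x)+\lambda^{1/\alpha})^{-\beta}$ with constants depending only on $\alpha,\beta$.

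The subtlety is that $(-\Delta)^s_p\overline w_\rho$ does not depend only on the local behaviour of the profile: changing $\overline w_\rho$ away from $U_{x_0}$ perturbs the value at $x_0$. So after obtaining the estimate for the genuinely localized barrier $W_\lambda$, I would estimate the difference $(-\Delta)^s_p\overline w_\rho(x)-(-\Delta)^s_p W_\lambda(x)$ for $x$ in a small collar near $x_0$. Because $\overline w_\rho$ and $W_\lambda$ differ only on a set at positive distance from such $x$, and both are bounded there, this difference is a convergent integral bounded by $C\int_{\{|x-y|\geq c\}}\frac{|\overline w_\rho(y)-W_\lambda(y)|^{p-1}+\dots}{|x-y|^{N+sp}}\,dy$, hence by a constant independent of $\lambda\in[0,\lambda^*]$ and of $x$ in the collar (using the uniform bounds on $\overline w_\rho,W_\lambda$ and, for $p<2$, the usual concavity/monotonicity estimate $|[a]^{p-1}-[b]^{p-1}|\le C|a-b|^{p-1}$; for $p\ge 2$ the elementary bound in terms of $|a-b|(|a|+|b|)^{p-2}$). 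Since $(d(x)+\lambda^{1/\alpha})^{-\beta}\to\infty$ as one approaches $\partial\Omega$ (note $\beta=sp-\alpha(p-1)>0$ because $\alpha<s$), a bounded error is absorbed: shrinking $\eta_*$ we get, for $x\in\Omega_{\eta_*}\cap U_{x_0}$,
\begin{equation*}
(-\Delta)^s_p\overline w_\rho(x)\geq \tfrac{1}{2\tilde C}(d(x)+\lambda^{1/\alpha})^{-\beta}-C\geq C_5\,(d(x)+\lambda^{1/\alpha})^{-\beta},
\end{equation*}
and symmetrically $(-\Delta)^s_p\underline w_\rho(x)\le C_6\,(d(x)+\lambda^{1/\alpha})^{-\beta}$. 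Upgrading these pointwise bounds to E-weak inequalities is exactly the content of Lemma \ref{equi:stro-weak} once the approximating integrals $H_\epsilon$ are shown to converge in $L^1_{loc}$, which follows from the uniform bounds already assembled (as in the proof of Theorem \ref{thpr}).

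Finally I would run a compactness argument: $\partial\Omega$ is covered by finitely many such neighborhoods $U_{x_1},\dots,U_{x_m}$, each yielding parameters $(\rho_i,\lambda_i,\eta_i)$ and constants $(C_{5,i},C_{6,i})$; taking $\rho=\min_i\rho_i$, $\lambda_*=\min_i\lambda_i$, $\eta_*$ smaller than $\min_i\eta_i$ and than the distance needed to keep $\Omega_{\eta_*}$ inside $\bigcup_i U_{x_i}$, and $C_5=\min_i C_{5,i}$, $C_6=\max_i C_{6,i}$, gives \eqref{mainest} on all of $\Omega_{\eta}$ for $\eta<\eta_*$ and $\lambda\in[0,\lambda_*]$. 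The membership $\underline w_\rho,\overline w_\rho\in\overline W^{s,p}(\Omega_\eta)$ for $\lambda>0$ follows from the corresponding local statement in Corollary \ref{esthm} (the computation \eqref{sp}), the fact that $\psi$ is a $C^{1,1}$-diffeomorphism equal to the identity at infinity (so it preserves $W^{s,p}$ and the tail condition), and that the modifications defining $\underline w_\rho,\overline w_\rho$ outside the collar are bounded with bounded support-type tails. The main obstacle I expect is the second step — controlling the nonlocal error between the true barrier and its straightened model uniformly in $\lambda$ down to $\lambda=0$, and checking that this error is genuinely dominated by the blow-up rate $(d(x)+\lambda^{1/\alpha})^{-\beta}$ near the boundary rather than merely by $d(x)^{-\beta}$ away from it; this is where the hypothesis $\alpha<s$ (equivalently $\beta>0$) and the precise form of $d_e$ in \eqref{ext:def} are essential.
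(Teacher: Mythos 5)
Your plan mirrors the paper's proof: flatten the boundary with $C^{1,1}$ diffeomorphisms $\Phi_i$ equal to the identity outside a ball (so that $d_e\circ\Phi_i = X_N$ near $0$), split the approximating integral $g_{\epsilon,j}(x)$ into the piece $M_\epsilon$ that coincides with the half-space quantity $H_\epsilon$ from Theorem \ref{thpr} plus a tail term $M_w$, show $|M_w|\leq\mathscr C$ uniformly in $\lambda$ and $X$, invoke Lemma \ref{equi:stro-weak} to pass to E-weak form, absorb the constant by shrinking $\eta$ and $\lambda_*$ (using $\beta>0$), and conclude by a finite cover. The membership $\underline w_\rho,\overline w_\rho\in\overline W^{s,p}(\Omega_\eta)$ is also handled by local flattening and the computation from \eqref{sp}, as you indicate, though the paper spends noticeably more effort on the overlap structure of the cover (the sets $\mathcal Q$, adjacency index sets $J^i$) to make the change of variables legitimate on pairs $(x,y)$ drawn from different charts.

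One claim you make in the error-control step is wrong as stated and would break the argument if taken literally: you assert that on the region where $\overline w_\rho$ and $W_\lambda$ differ, ``both are bounded there,'' and that the tail integral converges by ``uniform bounds on $\overline w_\rho, W_\lambda$.'' But $W_\lambda(x)=U_\lambda(\psi^{-1}(x)\cdot e_N)$ is \emph{not} globally bounded: since $\psi=\mathrm{Id}$ outside a ball, $W_\lambda(y)=(y_N+\lambda^{1/\alpha})_+^\alpha$ grows like $|y|^\alpha$ as $|y|\to\infty$. Boundedness alone therefore cannot close the tail estimate. What does close it, and what the paper uses explicitly, is the $\alpha$-H\"older bound $|U_\lambda(X_N)-U_\lambda(Y_N)|\le |X_N-Y_N|^\alpha$ (and the corresponding bound for $\underline w_\rho,\overline w_\rho$), which gives an integrand of order $|X-Y|^{\alpha(p-1)-N-sp}=|X-Y|^{-(N+\beta)}$, integrable at infinity precisely because $\beta=sp-\alpha(p-1)>0$. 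Your proposed elementary inequalities $|[a]^{p-1}-[b]^{p-1}|\le C|a-b|^{p-1}$ (for $p<2$) and $\le C|a-b|(|a|+|b|)^{p-2}$ (for $p\ge 2$) do in fact yield the same $|y|^{\alpha(p-1)}$ growth in the numerator once the boundedness assumption is replaced by the correct polynomial growth of $W_\lambda$, so the step is salvageable; but the justification as written is incorrect and should be replaced by the H\"older/growth bound.
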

\begin{proof}
Since $\partial \Omega \in C^{1,1}$, then for every $x \in \partial \Omega$, there exist a neighbourhood $N_x$ of $x$ and a bijective map $\Psi_x:Q\mapsto N_x$ such that 
$$ \Psi_x \in C^{1,1}(\overline{Q}), \ \Psi_x^{-1} \in C^{1,1}(\overline{N_x}),\ \Psi_x(Q_+)= N_x \cap \Omega \ \text{ and } \ \Psi_x(Q_0)= N_x \cap \partial\Omega$$
where $Q:= \{X= (X',X_N): |X'| <1,\ |X_N| <1\},\ Q_+ := Q \cap \mathbb{R}^N_+,\ Q_0 := Q \cap \{X_N=0\}$. \\

For any $x \in \partial \Omega$, $0<\tilde \rho< \rho< \rho^*$ where $\rho^*$ is defined in Theorem \ref{thpr} and using the fact that $\partial \Omega$ is compact, there exist a finite covering $\{B_{R_i}(x_i)\}_{i \in I}$ of $\partial \Omega$ and $\eta^*= \eta^*(R_i)$, $i \in I$ such that for any $\eta \in (0, \eta^*)$
\begin{equation}\label{cover:omega}
 \Omega_\eta\subset \bigcup_{i\in I} B_{R_i}(x_i) \quad \mbox{ and } \ \forall i\in I, \quad \Psi^{-1}_{x_i}(B_{R_i}(x_i))\subset B_{\tilde\rho}(0)\subset B_{\rho}(0).   
\end{equation}
Now by using the geometry of $\partial\Omega$ and arguing as in Lemma $3.5$ and Theorem $3.6$ in \cite{IMS}, there exist diffeomorphisms $\Phi_i \in C^{1,1}(\mathbb{R}^N, \mathbb{R}^N)$  for any $i\in I$ satisfying $\Phi_i= \Psi_{x_i}$ in $B_{\rho}(0)$ and $\Phi_i=Id$ in   $(B_{4\rho}(0))^c$, 
$$\Omega_{\eta} \cap B_{R_i}(x_i)\Subset \Phi_i(B_{\tilde\rho} \cap \mathbb{R}^N_+),\quad 
d_e(\Phi_i(X))= (X_N + \lambda^{1/\alpha})_+ - \lambda^{1/\alpha}, \quad \forall \ X \in B_{\rho} $$
and for $\lambda$ small enough $\lambda^{1/\alpha}<\rho$, 
$$\Phi_i(B_\rho(0)\cap \{X_N\geq -\lambda^{1/\alpha}\}) \subset \Omega\cup (\Omega^c)_\rho.$$
Using the finite covering, it is sufficient to prove the statement in any of set {$\Omega_\eta \cap B_{R_i}(x_i)$} with $x_i \in \partial\Omega$ and  for the sake of simplicity we can suppose $x_i=0$, $\Phi_i= \Phi$ and $\Phi(0)=0$.\\ 
Let $g_{\epsilon,1}$ and $g_{\epsilon,2}$ be two functions defined by
 $$g_{\epsilon,1}(x)=\int_{D_\epsilon(x)}\frac{[\underline w_\rho(x)-\underline w_\rho(y)]^{p-1}}{|x-y|^{N+sp}} ~dy$$
and $$g_{\epsilon,2}(x)=\int_{D_\epsilon(x)}\frac{[\overline w_\rho(x)- \overline w_\rho(y)]^{p-1}}{|x-y|^{N+sp}} ~dy$$ 
where $D_\epsilon(x)= \{y: |\Phi^{-1}(x)-\Phi^{-1}(y)|> \epsilon\}$.\\
As in the proof of Theorem \ref{thpr}, it suffices to obtain suitable uniform bounds on compact sets of $g_{\epsilon,1}$ and $g_{\epsilon,2}$. Hence Lemma \ref{equi:stro-weak} gives estimates \eqref{mainest}.\\
Let {$x \in B_{R_i}(0) \cap \Omega_{\eta}$}, there exists $X \in B_{\tilde\rho}(0) \cap \mathbb{R}^N_+$ such that $\Phi(X)=x$ and hence by change of variables {and arguing as in Theorem $3.6$ in \cite{IMS}}, we obtain
\begin{equation*}
\begin{split}
g_{\epsilon,1}(x)=& \int_{(B_\epsilon(X))^c}\frac{[\underline{w}_\rho(\Phi(X))-\underline{w}_\rho(\Phi(Y))]^{p-1}}{|\Phi(X)-\Phi(Y)|^{N+sp}}  |\det D\Phi(Y)|~dY \\
=& \int_{B_\rho(0) \setminus B_{\epsilon}(X)}\dots + \int_{(B_{\rho}(0))^c}\dots \\
=& \int_{(B_{\epsilon}(X))^c} \frac{[(X_N + \lambda^{1/\alpha})^\alpha_+-(Y_N + \lambda^{1/\alpha})^\alpha_+]^{p-1}}{|\Phi(X)-\Phi(Y)|^{N+sp}} |\det D\Phi(Y)| dY\\
&+ \int_{(B_{\rho}(0))^c)} \frac{[\underline{w}_\rho(\Phi(X)-\underline{w}_\rho(\Phi(Y)))]^{p-1}- [U_\lambda(X_N)-U_\lambda(Y_N)]^{p-1} }{|\Phi(X)-\Phi(Y)|^{N+sp}} |\det D\Phi(Y)| dY\\
=& M_\epsilon(X) + M_{\underline w_\rho}(X) \end{split}
\end{equation*}
and similarly, 
\begin{equation*}
\begin{split}
g_{\epsilon,2}(x)&= \int_{(B_\epsilon(X))^c}\frac{[\overline w_\rho(\Phi(X))-\overline w_\rho(\Phi(Y))]^{p-1}}{|\Phi(X)-\Phi(Y)|^{N+sp}}  |\det D\Phi(Y)| ~dY \\
& = \int_{B_\epsilon^c(X)} \frac{[(X_N + \lambda^{1/\alpha})^\alpha_+-(Y_N + \lambda^{1/\alpha})^\alpha_+]^{p-1}}{|\Phi(X)-\Phi(Y)|^{N+sp}} |\det D\Phi(Y)| dY\\
& \quad + \int_{(B_{\rho}(0))^c} \frac{[\overline w_\rho(\Phi(X))-\overline w_\rho(\Phi(Y))]^{p-1}- [U_\lambda(X_N)-U_\lambda(Y_N)]^{p-1} }{|\Phi(X)-\Phi(Y)|^{N+sp}} |\det D\Phi(Y)| dY\\
&= M_\epsilon(X) + M_{\overline w_\rho}(X).
\end{split}
\end{equation*}
From the Lipschitz continuity of $\Phi^{-1}$, the $\alpha$-H\"older continuity of $U_\lambda$, $\underline{w}_\rho$ and $\overline w_\rho$, we obtain by using \eqref{est8} for $w=\underline w_\rho$ or $w=\overline w_\rho$:
\begin{equation}\label{est12}
|M_w(X)|
\leq c_\Phi   \int_{(B_{\rho}(0))^c} \frac{2}{|X-Y|^{N+\beta}} dY\leq C(\Phi, \rho, \tilde\rho) \int_{\mathbb{R}^N}  \frac{1}{(1+|Y|)^{N+\beta}} dY \leq \mathscr{C}
\end{equation}
where $\mathscr C$ is a constant independent of $X,\, \lambda$ and $\epsilon$.\\
{Now we deal with }$M_\epsilon$ performing change of variables. We note $M_\epsilon$ coincides with $H_\epsilon$ in \eqref{est701}. Hence, using the estimate in \eqref{est-new0}, we get 
\begin{equation}\label{est13}
c_3 (d(x)+\lambda^{1/\alpha})^{-\beta} \leq \lim_{\epsilon \to 0} M_\epsilon(\Phi^{-1}(x)) \leq c_4 (d(x)+\lambda^{1/\alpha})^{-\beta} \ \ \text{E-weakly in} \ {\Omega_{\eta} \cap B_{R_i}(0)}.
\end{equation}
where  $c_3$ and $c_4$ are positive constant depending upon $\alpha,\,N,\,s,\,p$ and $\Phi.$
By combining \eqref{est12} and \eqref{est13} for any $i \in I$, we obtain for all $x \in \Omega_{\eta}$
$$(-\Delta)^s_p\underline w_\rho(x) \leq c_3 (d(x)+\lambda^{1/\alpha})^{-\beta} + \mathscr{C} \ \ \text{E-weakly in} \ \Omega_{\eta}$$
and 
$$c_4 (d(x)+\lambda^{1/\alpha})^{-\beta} -   \mathscr{C}  \leq (-\Delta)^s_p \overline w_\rho(x) \ \ \text{E-weakly in} \ \Omega_{\eta}.$$
Finally, we deduce the estimates \eqref{mainest} taking $\eta$ and $\lambda$ small enough.
 \\
To prove $\underline w_\rho,  \overline w_\rho \in \overline{W}^{s,p}(\Omega_\eta)$ for $\lambda >0$, it is sufficient to claim
\begin{equation*}
\underline w_\rho,  \overline w_\rho \in W^{s,p}(K),\quad K:=\Omega_{\eta_1} \cup (\Omega^c)_{\eta_2} 
\end{equation*}
for some $0< \eta < \eta_1$ and $\eta_2>0.$ \\
For $x_i \in \partial \Omega$, {for $\eta_0\in (0,\eta^*)$}, let $\{B_{R_i}(x_i)\}_{i \in I}$ be the finite covering of $\Omega_{\eta_0}$ and  $\Xi_{i} \in C^{1,1}(\mathbb{R}^N, \mathbb{R}^N)$ such that
\begin{equation}\label{diff:pro}
\begin{split}
B_{R_i}(x_i) &\Subset \Xi_i(B_{\xi_0}),\quad  d_e(\Xi_i(X))= (X_N + \lambda^{1/\alpha})_+ - \lambda^{1/\alpha}, \quad \forall \ X \in B_{\xi_0}
\end{split}
\end{equation}
for some $\xi_0\in (0,\frac{\lambda^{1/\alpha}}{2}).$ The existence of finite covering $\{B_{R_i}(x_i)\}_{i \in I}$ and diffeomorphisms $\Xi_i$ are obtained as above by using \eqref{cover:omega} .\\
For any $i \in I$, there exists a subset $J^i$ of $I$ such that 
\begin{equation}\label{def:adjacent}
  B_{R_i}(x_i) \cap B_{R_j}(x_j) \neq \emptyset \ \ \forall \ j \in J^i.  
\end{equation}
The collection of sets $\{B_{R_j}(x_j)\}_{j \in J^i}$ satisfying \eqref{def:adjacent} are called adjacent sets to $B_{R_i}(x_i).$ Now for any $i \in I$ and $j \in J^i$, define for some $\tau_i < R_i$
\begin{equation}\label{est:insidecover}
K_i:= B_{\tau_i} (x_i) \subset B_{R_i}(x_i) 
\end{equation}
such that 
\begin{equation}\label{est:newcover}
 {\mbox{for any } i\in I,\   K_i \cap K_j \neq \emptyset \ \ \forall \ j \in J_i} \quad \text{and} \quad  \min_{i \in I}\big( \min_{j \in J^i}\, \dist(K_j \setminus B_{R_i}(x_i), K_i)\big) >0.
\end{equation}
{By using \eqref{est:insidecover} and \eqref{est:newcover}, we choose $\eta_1$ and $\eta_2$ small enough such that} 
$$ K = \Omega_{\eta_1} \cup (\Omega^c)_{\eta_2} \subset \bigcup_{i \in I} K_i.$$
Now by using \eqref{diff:pro}, we obtain, for any $i\in I$
\begin{equation}\label{diff:newpro}
\begin{array}{l}
\Omega_{\eta_1} \cap  K_i \subset \Omega_{\eta_1} \cap  B_{R_i}(x_i) \Subset \Xi_i(B_{\xi_0} \cap  \mathbb{R}^N_+),  \\
(\Omega^c)_{\eta_2} \cap K_i \subset (\Omega^c)_{\eta_2} \cap  B_{R_i}(x_i) \Subset \Xi_i(B_{\xi_1} \cap  \mathbb{R}^N_-)  \\
\text{and}\  d_e(\Xi_i(X))= (X_N + \lambda^{1/\alpha})_+ - \lambda^{1/\alpha}, \quad \forall \ X \in \Xi_{i}^{-1}(K_i) \subset B_{\xi_0}
\end{array}
\end{equation}
for some $\eta_1 < \eta^*$ and $\eta_2 >0$ such that $0< \xi_1 < \frac{\lambda^{1/\alpha}}{2}.$ 
Set $\widehat{K}_{i}= K_i \cap K.$ Then, splitting {$K\times K=\mathcal Q \cap (K\times K\setminus \mathcal Q)$ where 
$$\mathcal Q=\bigcup_{i\in I}\left(\widehat K_i\times \bigcup_{j\notin J^i}\widehat K_j\right)\cup\bigcup_{i\in I}\left(\widehat K_i\times \bigcup_{j\in J^i}\widehat K_j\cap (B_{R_i}(x_i))^c\right),$$
we obtain
from \eqref{def:adjacent}- \eqref{est:newcover}
\begin{equation}\label{est:integral1}
\iint_{\mathcal{Q}} \frac{|\underline{w}_\rho(x)-\underline{w}_\rho(y)|^{p}}{|x-y|^{N+sp}} ~dx~dy =\iint_{\mathcal{Q}} \frac{|(d(x) + \lambda^{\frac{1}{\alpha}})^\alpha - (d(y) + \lambda^{\frac{1}{\alpha}})^\alpha|^{p}}{|x-y|^{N+sp}} ~dx~dy \leq C_{\Omega, \eta}
\end{equation}
and for the second part, we perform change of variables using \eqref{diff:newpro} and diffeomorphisms $\Xi_i$ 
\begin{equation}\label{est:integral2}
\begin{split}
&\iint_{K\times K\setminus \mathcal Q}  \frac{|\underline{w}_\rho(x)-\underline{w}_\rho(y)|^{p}}{|x-y|^{N+sp}} ~dx~dy \\
=& \iint_{\Xi_i^{-1}(\widehat{K}_i)\times\Xi_i^{-1}(\widehat{K}_i)}   \frac{|(d(\Phi(X)) + \lambda^{\frac{1}{\alpha}})^\alpha - (d(\Phi(Y)) + \lambda^{\frac{1}{\alpha}})^\alpha|^{p}}{|\Phi_i(X)-\Phi_i(Y)|^{1+sp}} J_{\Xi_i}(X) J_{\Xi_i}(Y) ~dX~dY\\
& + \sum_{i\in I}\sum_{j \in J^i} \int_{\Xi_i^{-1}(\widehat{K}_i)}  \int_{\Xi_i^{-1}(\widehat{K}_{j} \cap B_{R_{i}}(x_{i}))}  \frac{|(d(\Phi(X)) + \lambda^{\frac{1}{\alpha}})^\alpha - (d(\Phi(Y)) + \lambda^{\frac{1}{\alpha}})^\alpha|^{p}}{|\Phi_i(X)-\Phi_i(Y)|^{1+sp}} J_{\Xi_i}(X) J_{\Xi_i}(Y) ~dX~dY\\
\leq&  C_{\Phi_i} \bigg( \iint_{\Xi_i^{-1}(\widehat{K}_i) \times\Xi_i^{-1}(\widehat{K}_i)} \frac{|(X_N + \lambda^{\frac{1}{\alpha}})_+^\alpha-(Y_N + \lambda^{\frac{1}{\alpha}})_+^\alpha|^{p}}{|X_N-Y_N|^{N+sp}} ~dX~dY \\ 
&+ \sum_{i\in I}\sum_{j \in J^i}\int_{\Xi_i^{-1}(\widehat{K}_i)}  \int_{\Xi_i^{-1}(\widehat{K}_{j} \cap B_{R_{i}}(x_{i}))}  \frac{|(X_N + \lambda^{\frac{1}{\alpha}})_+^\alpha-(Y_N + \lambda^{\frac{1}{\alpha}})_+^\alpha|^{p}}{|X_N-Y_N|^{N+sp}} ~dX~dY\bigg).
\end{split}
\end{equation}
Hence by observing  that $X_N, Y_N >-\min\{\xi_0,\xi_1\}> - \frac{\lambda^{1/\alpha}}{2}$} for all $X,Y \in \Xi_i^{-1}(\widehat{K}_i)$ and by using the same argument as in Theorem \ref{thest} and by combining \eqref{est:integral1} and \eqref{est:integral2}, we obtain $\underline w_\rho \in \overline{W}^{s,p}(\Omega_\eta).$ Similarly, we can prove $\overline w_\rho \in \overline{W}^{s,p}(\Omega_\eta).$
\end{proof}
\section{Sobolev and H\"older regularity}\label{Sob-Hol}
\noindent Introduce the following exponent
$$\alpha^\star_0:= \frac{sp-\delta}{p-1}.$$
We consider the sequence of function $\{\tilde K_{\lambda,\delta}\}_{\lambda \geq 0}$ where {$\delta \in (0,sp)$}, $\tilde K_{\lambda,\delta}: \mathbb{R}^N \to \mathbb{R}_+$  such that 
\begin{equation*}
 \begin{aligned}
\tilde K_{\lambda,\delta}(x)=  \left\{
 \begin{array}{ll}
 (K_\delta^{-\frac{1}{\delta}}(x)+ \lambda^{\frac{1}{\alpha^\star_0}})^{-\delta}  &  \text{ if } x \in \Omega, \\
 0 & \text{ if } x \notin \Omega,\\
 \end{array} 
 \right.
 \end{aligned}
 \end{equation*}
satisfying $\tilde K_{\lambda,\delta} \nearrow K_\delta$ a.e. in $\Omega$ as {$\lambda \to 0^+$}, and there exist two positive constants $\mathcal{D}_3, \mathcal{D}_4$ such that 
\begin{equation}\label{qasda}
\frac{\mathcal{D}_3}{\left(d(x)+ \lambda^{\frac{1}{\alpha^\star_0}} \right)^\delta} \leq \tilde K_{\lambda, \delta}(x) \leq \frac{\mathcal{D}_4}{\left(d(x)+ \lambda^{\frac{1}{\alpha^\star_0}} \right)^\delta}.
\end{equation}
Gathering Proposition \ref{prelim}, Theorem \ref{esix-prob} and Remark \ref{rk31}, we have the following result for the following approximated problem (noting $\gamma=0$ in Proposition \ref{prelim}):
\begin{equation*}\label{weightapp}
    (S_{\lambda}^\delta) \left\{
         \begin{alignedat}{2} 
             {} (-\Delta)^s_p u
             & {}=\tilde K_{\lambda,\delta}
             && \quad\mbox{ in } \, \Omega;   
             \\
             u & {}= 0
             && \quad\mbox{ in }\, \mathbb{R}^N \setminus \Omega.   
          \end{alignedat}
     \right.
\end{equation*}
\begin{thm}\label{weithm}
Let  $\Omega$ be a bounded domain with Lipschitz boundary.
Then there exists a increasing sequence of weak solution $\{u_\lambda\}_{\lambda>0} \subset W_0^{s,p}(\Omega) \cap L^\infty(\Omega)$ of $(S^\delta_{\lambda})$ such that 
\begin{equation}\label{ews}
\iint_{\mathbb{R}^{2N}} \frac{[u_\lambda(x)-u_\lambda(y)]^{p-1} (\phi(x)-\phi(y))}{|x-y|^{N+sp}} ~dx ~dy = \int_{\Omega} \tilde K_{\lambda,\delta}(x) \phi ~dx.
\end{equation}
 for all $\phi \in W_0^{s,p}(\Omega)$ and a minimal weak solution $u$ of $(S_{0}^\delta)$ such that  $u_\lambda^{\theta_1} \to u^{\theta_1} $ in $W_0^{s,p}(\Omega)$ and
 \begin{equation*}
\iint_{\mathbb{R}^{2N}} \frac{[u(x)-u(y)]^{p-1} (\varphi(x)-\varphi(y))}{|x-y|^{N+sp}} ~dx ~dy = \int_{\Omega} K_\delta(x) \varphi  ~dx
\end{equation*}
for all $\varphi \in \mathbb T $ where 
$\begin{aligned}
 \theta_1=  \left\{
 \begin{array}{ll}
 1 &  \text{ if } 0< \delta < 1+ s- \frac{1}{p}, \\
 \theta_2 & \text{otherwise} ,\\
 \end{array} 
\right. \ \text{and} \ \ \theta_2 > \max\{ \frac{sp-1}{p\alpha^\star_0}, 1\}.  
\end{aligned}
$
\end{thm}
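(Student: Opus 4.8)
The plan is to establish Theorem~\ref{weithm} by running the approximation scheme $(S_\lambda^\delta)$ exactly as in the proof of Theorem~\ref{esix-prob} with $\gamma = 0$, and then extracting a limit. First, for each fixed $\lambda > 0$ the weight $\tilde K_{\lambda,\delta}$ is bounded on $\Omega$ (it is comparable to $(d(x)+\lambda^{1/\alpha_0^\star})^{-\delta}$ by \eqref{qasda}), so Proposition~\ref{prelim} applied with $\gamma = 0$ yields a unique weak solution $u_\lambda \in W_0^{s,p}(\Omega) \cap C^{0,\ell}(\overline\Omega)$ satisfying \eqref{ews}, together with $u_\lambda > 0$ in $\Omega$ and the strict monotonicity $u_{\lambda_1} < u_{\lambda_2}$ for $\lambda_2 < \lambda_1$ (this is the monotone-comparison content of Proposition~\ref{prelim} and follows from Theorem~4.2 in \cite{CMB} since $\tilde K_{\lambda,\delta}$ is increasing as $\lambda \downarrow 0$). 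Thus $\{u_\lambda\}_{\lambda>0}$ is an increasing sequence as $\lambda \downarrow 0$ and $u := \lim_{\lambda \to 0^+} u_\lambda$ is well defined pointwise in $\Omega$.

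Next I would derive the uniform a priori bound on $u_\lambda^{\theta_1}$ in $W_0^{s,p}(\Omega)$. In the regime $0 < \delta < 1+s-\tfrac1p$ one has $\Lambda \le 1$ (with $\gamma = 0$, $\Lambda = \tfrac{(sp-1)(p-1)}{p(sp-\delta)}$, and $\delta < 1+s-\tfrac1p$ forces $\Lambda < 1$), so by Remark~\ref{rk31} the argument of Case~1 of Theorem~\ref{esix-prob} applies with $\theta_1 = 1$: testing \eqref{ews} with $\phi = u_\lambda$, using \eqref{qasda} together with the H\"older and Hardy inequalities, gives $[u_\lambda]_{s,p}^p \le C\,[u_\lambda]_{s,p}^{p-1}$ (the relevant power being $1-\gamma = 1$ with the weight $d^{-\delta}$ handled by $d^{s-\delta} \in L^{p'}$), hence $\|u_\lambda\|_{s,p} \le C$. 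In the complementary regime, one follows Case~2 of Theorem~\ref{esix-prob} verbatim: set $\Phi(t) = t^{\theta_2}$ with $\theta_2 > \max\{\tfrac{sp-1}{p\alpha_0^\star}, 1\}$ (note $\tfrac{sp-1}{p\alpha_0^\star} = \tfrac{(sp-1)(p-1)}{p(sp-\delta)} = \Lambda$ when $\gamma = 0$), invoke Proposition~\ref{prelim2} with $g = \tilde K_{\lambda,\delta} \in L^\infty(\Omega)$ and $w = \Phi \circ u_\lambda \in W_0^{s,p}(\Omega)$, test the resulting inequality with $\Phi(u_\lambda)$, and apply the H\"older--Hardy chain to absorb, obtaining $[\Phi(u_\lambda)]_{s,p}^p \le C\,[\Phi(u_\lambda)]_{s,p}^{(\theta_2 p - (p-1))/\theta_2}$ and therefore a uniform bound on $\{u_\lambda^{\theta_2}\}$ in $W_0^{s,p}(\Omega)$. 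In either case write $\theta_1$ for the exponent as in the statement.

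Then I would pass to the limit. By the uniform bound and reflexivity, $u_\lambda^{\theta_1} \rightharpoonup u^{\theta_1}$ weakly in $W_0^{s,p}(\Omega)$ (the pointwise limit identifies the weak limit), and strong convergence $u_\lambda^{\theta_1} \to u^{\theta_1}$ in $W_0^{s,p}(\Omega)$ follows from monotonicity of the sequence together with a Brezis--Lieb / monotone-operator argument exactly as at the end of the proofs of Theorems~3.2 and 3.6 in \cite{CMB}. To identify the limit equation, fix $\tilde\Omega \Subset \Omega$ and $\varphi \in W_0^{s,p}(\tilde\Omega)$: by Proposition~\ref{prelim} there is $\sigma = \sigma(\tilde\Omega) > 0$ with $u_\lambda \ge u_1 \ge \sigma$ on $\tilde\Omega$ for all small $\lambda$, so $\tilde K_{\lambda,\delta}(x)\varphi(x)$ is dominated by $\mathrm{dist}(\tilde\Omega,\partial\Omega)^{-\delta}|\varphi|$ and converges pointwise to $K_\delta(x)\varphi(x)$; dominated convergence then passes the right-hand side of \eqref{ews} to $\int_\Omega K_\delta \varphi\,dx$, while the left-hand side converges by the strong $W_0^{s,p}$ convergence (monotonicity of the fractional $p$-Laplacian operator gives convergence of the nonlinear form on $\varphi$). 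Finally, minimality of $u$: if $v$ is any weak solution of $(S_0^\delta)$ in the sense of Definition~\ref{def1}, then $v$ is a weak supersolution of the regularized problem $(S_\lambda^\delta)$ (since $\tilde K_{\lambda,\delta} \le K_\delta$ and $v>0$ locally), so Theorem~4.2 in \cite{CMB} gives $u_\lambda \le v$ a.e.\ in $\Omega$ for every $\lambda$, and letting $\lambda \to 0^+$ yields $u \le v$. The main obstacle is the strong convergence $u_\lambda^{\theta_1} \to u^{\theta_1}$ in $W_0^{s,p}(\Omega)$ near the singular set $\partial\Omega$, where the forcing term is only in $L^1_{loc}$; this is precisely where the uniform Hardy-type estimates of the previous step and the monotonicity of $\{u_\lambda\}$ must be combined, and it is handled as in \cite{CMB}.
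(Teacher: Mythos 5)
Your proposal is correct and follows essentially the same route as the paper, which compresses the argument into a single sentence (``Gathering Proposition~\ref{prelim}, Theorem~\ref{esix-prob} and Remark~\ref{rk31}\dots''): approximate by $(S^\delta_\lambda)$ via Proposition~\ref{prelim} with $\gamma=0$, derive the uniform $W^{s,p}_0$-bound on $u_\lambda^{\theta_1}$ by running Case~1 (for $\Lambda<1$, i.e.\ $\delta<1+s-\tfrac1p$, using Remark~\ref{rk31}) or Case~2 of Theorem~\ref{esix-prob}, pass to the limit on compactly supported test functions, and obtain minimality from the comparison principle. One minor slip: in Case~1 the H\"older--Hardy chain should give $[u_\lambda]_{s,p}^p\le C[u_\lambda]_{s,p}^{1-\gamma}=C[u_\lambda]_{s,p}$, not $C[u_\lambda]_{s,p}^{p-1}$ as you wrote; the conclusion (uniform boundedness) is unaffected.
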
 
\noindent Let $\lambda_{s,p}$ be the first eigenvalue and $\varphi_{s,p}$ be a positive eigenfunction for the operator $(-\Delta)^s_p$. There exists a constant $c>0$ such that $ \frac1c d^s(x) \leq \varphi_{s,p}(x) \leq cd^s(x)$ for any $x \in \Omega$. {Indeed, the upper estimate can be retrieved Theorem 3.2 in \cite{FP} and Theorem 4.4 in \cite{IMS}, and the lower estimate from Theorem 1.1 in \cite{IMS} and Theorem 1.5 in \cite{Pe-quaas}}. Hence, from \eqref{qasda}, for any $\delta>0$, choosing a constant $a>0$ small enough, the following inequality holds for any $x\in \Omega$ and $\lambda\geq 0$:
\begin{equation*}
(-\Delta)^s_p (a \varphi_{s,p})
\leq \tilde K_{\lambda,\delta}(x) \leq (-\Delta)^s_p u_{\lambda}.
\end{equation*}
Then, by using Proposition $2.10$ in \cite{IMS}, we get, for any $\delta \in (0,sp)$, there exists a constant $\kappa_1$ such that for any $\lambda\geq 0$
\begin{equation}\label{22}
\kappa_1 d^{s}(x) \leq u_{\lambda}(x) \ \text{for any} \ x \in \Omega.
\end{equation}
Moreover, we have the {upper bound of $u_\lambda$ in $\Omega \setminus \Omega_\eta$. For $\eta>0$ small enough}, we consider $\{{B_{\frac{\eta}{4}}}(x_i)\}_{i \in \{1,2,\dots m\} }$  a finite covering of $ \overline{\Omega \setminus \Omega_{\eta}}$  such that
\begin{equation}\label{cover:omega:new}
  \overline{\Omega \setminus \Omega_{\eta}} \subset  \bigcup_{i=1}^m  {B_{\frac{\eta}{4}}(x_i)} \subset \Omega\setminus \Omega_{\frac\eta2}.  
\end{equation}
Then, from Theorem 3.2 and Remark 3.3 in \cite{BLS}, we deduce for any $i \in \{1,2,\dots,m\}$ 
\begin{equation}\label{est:loc-bdd}
\begin{split}
    \|u_\lambda\|_{L^\infty({ B_{\frac{\eta}{4}}(x_i)})} \leq& C \bigg[ \left( \fint_{{ B_{\frac{\eta}{2}}(x_i)}} |u_\lambda(x)|^p~dx\right)^{\frac{1}{p}} + \left(\eta^{sp}\int_{\R^N\setminus B_{\frac{\eta}{4}}(x_i)} \frac{|u_\lambda(x)|^{p-1}}{|x-x_i|^{N+sp}}~dx\right)^{\frac{1}{p-1}}  \\
     &+ \left(\eta^{sp}  \|\tilde{K}_{\lambda, \delta}\|_{L^\infty({ B_{\frac{\eta}{2}}(x_i)})}\right)^{\frac{1}{p-1}}\bigg]\
    \end{split}
\end{equation}
where $C$ depends upon $N,\,p$ and $s$. From the proof of Theorem \ref{esix-prob},  $\{u_\lambda^{\theta_{1}}\}_\lambda$ is uniformly bounded in $W_0^{s,p}(\Omega)$ and Sobolev embedding implies 
\begin{equation}\label{est:loc-bdd:exp1}
     \left(\fint_{{ B_{\frac{\eta}{2}}(x_i)}} |u_\lambda|^p\right)^{\frac{1}{p}} \leq c ( 1 + \|u_\lambda^{\theta_1}\|_{L^p(\Omega)}) \leq c (1+ \|u_\lambda^{\theta_1}\|_{s,p})\leq c.
\end{equation}
In the same way, the second term of the right hand-side is controlled, up to a constant independent of $\lambda$, by
\begin{equation}\label{est:loc-bdd:exp2}
        \left(\eta^{sp} \int_{\Omega\setminus B_{\frac{\eta}{4}}(x_i)} \frac{|u_\lambda(x)|^{p-1}}{\eta^{N+sp}} ~dx\right)^\frac{1}{p-1}  \leq  \eta^{-\frac{N}{p-1}} \|u_\lambda
        \|_{L^{p-1}(\Omega)} \leq c.
    \end{equation}
For the last term, for any $x\in \Omega\setminus \Omega_{\frac\eta2}$, we have
 \begin{equation*}
     |\tilde K_{\lambda, \delta}(x)| \leq \frac{\mathcal{D}_4}{\left(d(x)+ \lambda^{\frac{1}{\alpha^\star_0}} \right)^\delta} \leq c {\eta}^{-\delta} \leq c.
 \end{equation*}
Each constant in the previous estimates are independent of $\lambda$ {but depends on $\eta$}. Finally, plugging the three previous estimates into \eqref{est:loc-bdd} we deduce that for any $\eta>0$, there exists $\kappa_\eta>0$  independent of $\lambda$ such that
\begin{equation}\label{kapeta}\|u_{\lambda}\|_{L^\infty(\Omega \setminus \Omega_{\eta})}\leq \kappa_\eta.
\end{equation}
\noindent Now, we prove the sharp estimates for both upper and lower boundary behavior of the minimal weak solution for problem $(S_0^\delta)$ for different range of $\delta$. In this regard, we prove the following results with the help of comparison principle:

\begin{thm}\label{bdybeh}
Let $\Omega$ be a bounded domain with $C^{1,1}$ boundary and $u$ be minimal weak solution of the problem $(S_0^\delta)$. Then, we have
\begin{enumerate} 
\item[(i)] For $\delta\in (s,sp)$, there exists a positive constant $\Upsilon_1$ such that for any $x\in \Omega$, 
\begin{equation*}\label{large}
\frac1\Upsilon_1 d^{\alpha^\star_0}(x) \leq u(x) \leq \Upsilon_1 d^{\alpha^\star_0}(x).
\end{equation*}
\item[(ii)] For $\delta\in (0,s)$, for any $\epsilon>0$, there exist positive constants $\Upsilon_2$ and $\Upsilon_3=\Upsilon_3(\epsilon)$  such that for any $x \in \Omega$:
\begin{equation*}\label{equal}
\Upsilon_2 d^{s}(x) \leq u(x) \leq \Upsilon_3  d^{s-\epsilon}(x).
\end{equation*}
\end{enumerate}
\end{thm}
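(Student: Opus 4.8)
# Proof Proposal for Theorem \ref{bdybeh}

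\textbf{Proof proposal.} The plan is to transfer the bounds to the approximating family $\{u_\lambda\}$ of $(S_\lambda^\delta)$ given by Theorem \ref{weithm}, which converges monotonically and pointwise to $u$, and then to pass to the limit $\lambda\to 0^+$. One half of the lower estimates is already available: by \eqref{22} we have $\kappa_1 d^s(x)\le u_\lambda(x)$ with $\kappa_1$ independent of $\lambda$, hence $\kappa_1 d^s\le u$ in $\Omega$. This is precisely the lower estimate in (ii), and since $\alpha^\star_0<s$ whenever $\delta>s$ it also yields the lower estimate in (i) on $\{d\ge\eta\}$ after comparing the two powers of $d$ (using $d\le\diam\Omega$). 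It remains to produce the matching bounds inside the boundary strip $\Omega_\eta$, where the barriers of Theorem \ref{thq} are the right tool, together with the uniform bound \eqref{kapeta} on $\Omega\setminus\Omega_\eta$.

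For (i) I would take the barrier exponent $\alpha=\alpha^\star_0=\frac{sp-\delta}{p-1}$, which lies in $(0,s)$ exactly because $\delta\in(s,sp)$; then $\beta=sp-\alpha(p-1)=\delta$, so Theorem \ref{thq} (applied with barrier parameter $\lambda$) gives $(-\Delta)^s_p\overline w_\rho\ge C_5(d+\lambda^{1/\alpha^\star_0})^{-\delta}$ and $(-\Delta)^s_p\underline w_\rho\le C_6(d+\lambda^{1/\alpha^\star_0})^{-\delta}$ E-weakly in $\Omega_\eta$. Comparing with \eqref{qasda} and using $(-\Delta)^s_p(t\,w)=t^{p-1}(-\Delta)^s_p w$, a large constant $M$ makes $M\overline w_\rho$ an E-weak supersolution of $(S_\lambda^\delta)$ in $\Omega_\eta$ and a small constant $m$ makes $m\underline w_\rho$ an E-weak subsolution there; both $M$ and $m$ can be fixed independently of $\lambda$. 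One checks the ordering on $\mathbb{R}^N\setminus\Omega_\eta$: outside $\Omega$ one has $\underline w_\rho\le 0=u_\lambda\le\overline w_\rho$ directly from \eqref{sub}--\eqref{super}, while on $\Omega\setminus\Omega_\eta$ the bounds \eqref{22} and \eqref{kapeta} together with $\eta\le d\le\diam\Omega$ allow $M$ large and $m$ small (uniformly in $\lambda$) so that $m\underline w_\rho\le u_\lambda\le M\overline w_\rho$ there. The weak comparison principle for E-weak sub/supersolutions (test with $(u_\lambda-M\overline w_\rho)^+$, resp. $(m\underline w_\rho-u_\lambda)^+$, and use strict monotonicity of $t\mapsto[t]^{p-1}$, as in Lemma 4.1 and Theorem 4.2 of \cite{CMB}) then propagates the ordering to all of $\Omega_\eta$; letting $\lambda\to 0^+$, so that $\overline w_\rho,\underline w_\rho\to d^{\alpha^\star_0}$ pointwise in $\Omega$, gives $m\,d^{\alpha^\star_0}\le u\le M\,d^{\alpha^\star_0}$ in $\Omega_\eta$, and \eqref{22}, \eqref{kapeta} close the estimate on $\Omega\setminus\Omega_\eta$.

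For the upper bound in (ii) the exponent $\alpha^\star_0$ is now $\ge s$ and therefore inadmissible in Theorem \ref{thq}; instead, for fixed $\epsilon\in(0,s)$ I would take $\alpha=s-\epsilon$, so that $\beta=s+\epsilon(p-1)>\delta$. To reconcile the barrier's intrinsic scale with the scale $\lambda^{1/\alpha^\star_0}$ appearing in $\tilde K_{\lambda,\delta}$, apply Theorem \ref{thq} with the barrier parameter $\mu:=\lambda^{\alpha/\alpha^\star_0}$, so that $\mu^{1/\alpha}=\lambda^{1/\alpha^\star_0}$ and $\mu\to 0$ with $\lambda$; this yields $(-\Delta)^s_p\overline w_\rho\ge C_5(d+\lambda^{1/\alpha^\star_0})^{-\beta}$ E-weakly in $\Omega_\eta$. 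Since $\beta-\delta>0$ and $d+\lambda^{1/\alpha^\star_0}$ is bounded above on $\Omega_\eta$ for $\lambda\le\lambda_*$, we get $(d+\lambda^{1/\alpha^\star_0})^{-\beta}\ge c\,(d+\lambda^{1/\alpha^\star_0})^{-\delta}$ with $c$ uniform in $\lambda$, so again a large $M$ makes $M\overline w_\rho$ an E-weak supersolution of $(S_\lambda^\delta)$ in $\Omega_\eta$, dominating $u_\lambda$ on $\mathbb{R}^N\setminus\Omega_\eta$ by \eqref{kapeta}. Weak comparison and $\lambda\to 0^+$ give $u\le M\,d^{s-\epsilon}$ in $\Omega_\eta$, which together with \eqref{kapeta} on $\Omega\setminus\Omega_\eta$ and the lower bound $\kappa_1 d^s\le u$ is exactly (ii).

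The main obstacles are two bookkeeping tasks rather than the limit passage: first, checking that every constant produced — $M$, $m$, the comparison constants, and the barrier data $\rho,\eta,\lambda_*$ — can be chosen independently of $\lambda$, which rests on the $\lambda$-independence in \eqref{22}, \eqref{kapeta} and Theorem \ref{thq}; and second, the patching across the interface $\{d=\eta\}$, i.e.\ verifying the ordering of $u_\lambda$ and the scaled barriers on $\mathbb{R}^N\setminus\Omega_\eta$ so that the E-weak comparison principle genuinely applies on $\Omega_\eta$. In case (ii) the obstruction is also structural: since the natural exponent $\alpha^\star_0\ge s$ is forbidden in the barrier construction, one can only reach $s-\epsilon$, which is why the upper estimate in (ii) carries the loss $\epsilon$ and is not expected to be sharp.
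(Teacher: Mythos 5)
Your treatment of part \textit{(i)} is essentially the paper's: both take the barrier exponent $\alpha=\alpha^\star_0=\frac{sp-\delta}{p-1}\in(0,s)$, scale $\underline w_\rho,\overline w_\rho$ from Theorem~\ref{thq} so that (by \eqref{mainest} and \eqref{qasda}) the multiples are E-weak sub/supersolutions of $(S^\delta_\lambda)$ in $\Omega_\eta$, use \eqref{22} and \eqref{kapeta} to match the ordering with $u_\lambda$ across $\Omega\setminus\Omega_\eta$ and on $\Omega^c$, invoke the comparison principle (Proposition 2.10 of \cite{IMS}) on $\Omega_\eta$, and let $\lambda\to 0^+$.

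For the upper bound in part \textit{(ii)} you take a genuinely different route. The paper does not rerun the barrier comparison directly: it introduces the auxiliary singular problem $(S^{\tilde\delta}_\lambda)$ with $\tilde\delta=s+\epsilon(p-1)\in(s,sp)$, shows that a $\lambda$-independent multiple $c_\epsilon\,\tilde u_\lambda$ of its approximating solution is a weak supersolution of $(S^\delta_\lambda)$, passes $\lambda\to 0^+$ via Proposition 2.10 of \cite{IMS} to get $u\le c_\epsilon\,\tilde u$, and then applies part \textit{(i)} to $\tilde u$ (for which $\alpha^\star_0(\tilde\delta)=s-\epsilon$) to conclude. You instead compare $u_\lambda$ directly with a scaled barrier $M\overline w_\rho$ built with exponent $\alpha=s-\epsilon$, so that $\beta=s+\epsilon(p-1)>\delta$, and you reparametrize the barrier's intrinsic parameter as $\mu=\lambda^{\alpha/\alpha^\star_0}$ so that $\mu^{1/\alpha}=\lambda^{1/\alpha^\star_0}$, the scale appearing in $\tilde K_{\lambda,\delta}$; from $\beta>\delta$ and the boundedness of $d+\lambda^{1/\alpha^\star_0}$ you get a supersolution in $\Omega_\eta$ uniformly in $\lambda$. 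This direct version avoids the appeal to \textit{(i)} inside the proof of \textit{(ii)} and makes explicit the change of approximation scale between $\alpha^\star_0(\delta)$ and $\alpha^\star_0(\tilde\delta)$ that the paper passes over silently; the price is redoing the patching across $\partial\Omega_\eta$ for the new exponent, but the pieces you cite (\eqref{kapeta}, and $\mu\le\lambda_*$ for $\lambda$ small since $\alpha/\alpha^\star_0<1$) do close the argument. Both proofs reach the same estimate.
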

\begin{proof} Let $u_\lambda$ be the solution of $(S^\delta_\lambda)$ {for $\lambda < \lambda^*$, $\eta < \eta^*$} and $\rho>0$ given by Theorem \ref{thq}.\\ 
We begin to prove {\it (i)}. Take $\alpha=\frac{sp-\delta}{p-1}=\alpha^\star_0<s$ implying $sp-\alpha(p-1)=\delta$ and we define, for some $\eta >0$, $${\underline{u}^{(\lambda)}}=\min\{ \kappa_2(\frac{\eta}{2})^{s-\alpha} , \left(\frac{\mathcal{D}_3}{C_6}\right)^{\frac{1}{p-1}}\}\ \underline w_\rho= \underline c_\eta\underline w_\rho$$
and 
$${\overline{u}^{(\lambda)}}= \max\{(\frac{2}{\eta})^\alpha \kappa_{\frac\eta2}, \left(\frac{\mathcal{D}_4}{C_5}\right)^{\frac{1}{(p-1)}}\}\ \overline w_\rho= \overline c_\eta \overline w_\rho$$
where $\overline{w}_\rho$ and $\underline w_\rho$  satisfies \eqref{mainest}, $0<\kappa_2<\kappa_1$, $C_5$, $C_6$ are defined in \eqref{mainest}, $\kappa_1$ and $\kappa_{\frac\eta2}$ are defined in \eqref{22} and \eqref{kapeta}  respectively and $\mathcal D_3, \mathcal D_4$ are defined in \eqref{qasda}. Note $\underline c_\eta$ and $\overline c_\eta$ are independent of $\lambda$. \\
Hence for any $\lambda>0$, $u_\lambda$ satisfies
\begin{equation}\label{cons}
{\underline{u}^{(\lambda)}}(x) \leq 
u_{\lambda}(x)\leq {\overline{u}^{(\lambda)}}(x)\  \text{for}\ x \in \Omega \setminus \Omega_{\frac{\eta}{2}},\   \text{ and }\ {\underline{u}^{(\lambda)}}(x) \leq 0=u_{\lambda}(x)={\overline{u}^{(\lambda)}}(x) \ \text{for} \ x \in \Omega^c.
\end{equation}
Precisely, from \eqref{22}, \eqref{kapeta} and the definitions of $\underline{w}_\rho,\, \overline{w}_\rho$ given by \eqref{sub} and \eqref{super}, we get for $x \in \Omega \setminus \Omega_{\frac{\eta}{2}}$
\begin{equation*}
\begin{split}
    \underline{u}^{(\lambda)}= \underline{c}_\eta \underline{w}_\rho &\leq \kappa_2 (\frac{\eta}{2})^{s-\alpha} \underline{w}_\rho \leq \kappa_1 (\frac{\eta}{2})^{s-\alpha} d^\alpha(x) \leq \kappa_1 d^{s}(x) \leq u_\lambda(x),\\
    & u_\lambda(x) \leq \kappa_{\frac{\eta}{2}} \leq \kappa_{\frac{\eta}{2}} (\frac{2}{\eta})^\alpha d^\alpha(x) \leq \overline{c}_\eta \overline{w}_\rho = \overline{u}^{(\lambda)}.
    \end{split}
\end{equation*}
Moreover, from \eqref{mainest} and \eqref{qasda} and the choice of constants, we get $(-\Delta)^s_p {\underline{u}^{(\lambda)}} \leq(-\Delta)^s_p u_{\lambda} \leq (-\Delta)^s_p {\overline{u}^{(\lambda)}}$ weakly in $\Omega_{\frac{\eta}{2}}$ {\it i.e.} 
for any $\phi \in W_0^{s,p}(\Omega_{\eta})$, $\phi\geq0$:
\begin{equation*}
\begin{split}
\iint_{\mathbb{R}^{2N}} \frac{[{\underline{u}^{(\lambda)}(x)-\underline{u}^{(\lambda)}}(y)]^{p-1}(\phi(x)-\phi(y))}{|x-y|^{N+sp}} ~dx ~dy&\leq \iint_{\mathbb{R}^{2N}} \frac{[u_{\lambda}(x)-u_{\lambda}(y)]^{p-1}(\phi(x)-\phi(y))}{|x-y|^{N+sp}} ~dx ~dy\\
&\leq \iint_{\mathbb{R}^{2N}} \frac{[{\overline{u}^{(\lambda)}(x)-\overline{u}^{(\lambda)}}(y)]^{p-1}(\phi(x)-\phi(y))}{|x-y|^{N+sp}} ~dx ~dy.
\end{split}
\end{equation*}
Since $\underline w_\rho,\ \overline w_\rho\in \overline W^{s,p}(\Omega_\eta)$ and $u_\lambda \in  W^{s,p}_0(\Omega) \cap L^\infty(\Omega) \subset \overline W^{s,p}(\Omega_\eta)$, Proposition 2.10 in \cite{IMS}  in $\Omega_{\eta}$ implies {$\underline{u}^{(\lambda)} \leq u_{\lambda}\leq \overline{u}^{(\lambda)}$} in $\Omega_\eta$. Hence, from \eqref{cons} and passing $\lambda \to 0$, we deduce  {\it (i)}.
\vspace{0.15cm}\\
Now we prove {\it (ii)} {\it i.e.} the case $\delta \leq s$. Since \eqref{22} holds, it remains to obtain the upper bound estimate.\\ Let $\tilde{u}_{\lambda} \in W_0^{s,p}(\Omega)$ be the weak solution of $(S^{\tilde\delta}_{ \lambda})$ with $\tilde\delta= s+\epsilon(p-1)>s$ and for $\epsilon >0$. Then, choosing a suitable constant $c_\epsilon>0$ independent of $\lambda$, ${\tilde u^{(\lambda)}}=c_\epsilon\tilde u_\lambda$ is a weak supersolution of $(S^{\delta}_{ \lambda})$.  
Hence by Proposition $2.10$ in \cite{IMS}, we have $u_\lambda\leq {\tilde u^{(\lambda)}}$ in $\Omega$. We pass to the limit as $\lambda \to 0$ and using {\it (i)} with $\tilde u(x)=\lim_{\lambda\to 0} {\tilde u^{(\lambda)}}(x)$, we get, for $\epsilon>0$,
$u(x) \leq \overline{c}_{\eta,\epsilon} d^{s-\epsilon}(x)$ for $x\in\Omega$.
\end{proof}
\noindent Concerning the H\"older regularity of the weak solution of the problem $(P)$, we prove Theorem \ref{thm:regularity}:  \\
\textbf{Proof of Theorem \ref{thm:regularity}} 
Let $u$ be the minimal solution of the problem $(P).$ First, we prove the boundary behavior of the minimal weak solution by dividing the proof into two cases:  \\
\textbf{ Case 1:} $\delta-s(1-\gamma)\leq 0$  \\
Let $\tilde{u}$ and $\dot{u}$ are weak solution of the problem $(S_0^\zeta)$ for {$\zeta= \delta+ \gamma s \leq s$} and $\zeta= \delta+ \gamma (s-\epsilon)<s$ respectively for ${\epsilon\in (0,s)}.$ Then, from Theorem \ref{bdybeh} $(ii)$ there exist constants $c_i>0$ such that 
$$c_1 d^{s}(x) \leq \tilde{u}(x) \leq c_2 d^{s-\epsilon}(x), \ c_3 d^{s}(x) \leq \dot{u}(x) \leq c_4 d^{s-\epsilon}(x)\ \text{in} \ \Omega$$ 
and $\dot{u}, \tilde{u}$ satisfies
{
 $$(-\Delta)^s_p \left(C_* \dot{u}\right)=  \frac{\mathcal{C}_1}{\mathcal{C}_2 c_4^{\gamma}} K_{\delta+\gamma(s-\epsilon)}(x) \leq  \frac{\mathcal{C}_1}{d^{\delta}(x)(c_2 d^{s-\epsilon}(x))^\gamma} \leq\frac{\mathcal{C}_1}{d^{\delta}(x) \dot{u}^\gamma} \leq \frac{K_\delta(x)}{\dot{u}^\gamma}$$
 and $$\frac{K_\delta(x)}{\tilde{u}^\gamma} \leq \frac{\mathcal{C}_2}{d^{\delta}(x) \tilde{u}^\gamma} \leq \frac{\mathcal{C}_2}{d^{\delta}(x) (c_1 d^s(x))^\gamma} \leq  \frac{\mathcal{C}_2}{\mathcal{C}_1 c_1^{\gamma}} K_{\delta+ \gamma s}(x) = (-\Delta)^s_p \left(C^* \tilde{u}\right)$$
where $C_*= \left(\frac{\mathcal{C}_1}{\mathcal{C}_2 c_4^\gamma}\right)^\frac{1}{p-1}$ and $C^*= \left(\frac{\mathcal{C}_2}{\mathcal{C}_1  c_1^\gamma}\right)^\frac{1}{p-1}$} and $\mathcal{C}_1, \mathcal{C}_2$ are defined in \eqref{eq:sing-weight}. Then by applying Theorem \ref{WCP}, we get
\begin{equation}\label{est:boundbeh1}
    C_1 d^{s}(x) \leq u(x) \leq C_2 d^{s-\epsilon}(x)\ \text{in} \ \Omega
\end{equation} 
for every $\epsilon>0$, $C_1= c_1 C_*$ and $C_2= c_4 C^*$.\vspace{0.2cm} \\
\textbf{ Case 2:} $\delta-s(1-\gamma)>0$  \\
Let $\lambda>0$ and $u_{\lambda} \in W_0^{s,p}(\Omega)$ be the solution of the problem $(P_{\lambda}^\gamma)$ {for $\lambda < \lambda^*$ given in Theorem \ref{thq}.} \\
By considering the same cover of $\overline{\Omega \setminus \Omega_{\eta}}$ as in \eqref{cover:omega:new} and applying Theorem 3.2 and Remark 3.3 in \cite{BLS}, we obtain, 
\begin{equation}\label{est:sing:loc-bdd}
\begin{split}
    \|u_\lambda\|_{L^\infty( B_{\frac{\eta}{4}}(x_i))} \leq & \ C \bigg[ \left( \fint_{B_{\frac{\eta}{2}}(x_i)} |u_\lambda(x)|^p~dx\right)^{\frac{1}{p}} + \left(\eta^{sp}\int_{\R^N\setminus B_{\frac{\eta}{4}}(x_i)} \frac{|u_\lambda(x)|^{p-1}}{|x-x_i|^{N+sp}}~dx\right)^{\frac{1}{p-1}}  \\
     &+ \left(\eta^{sp}  \left\|\frac{K_{\lambda, \delta}}{(u_\lambda+\lambda)^\gamma}\right\|_{L^\infty(B_{\frac{\eta}{2}}(x_i))}\right)^{\frac{1}{p-1}}\bigg]\
    \end{split}
\end{equation}
for any $i \in \{1,2,\dots,m\}$ where $C$ depends upon $N,\,p$ and $s$. By repeating the same arguments as in \eqref{22}, \eqref{est:loc-bdd:exp1} and \eqref{est:loc-bdd:exp2} we get that the first two terms in the right hand-side of \eqref{est:sing:loc-bdd} are bounded with bounds independent of $\lambda$ and
\begin{equation}\label{eigen:bdd}
    \varkappa d^s(x) \leq u_{\lambda}(x) \ \text{in} \ \Omega 
\end{equation}
for some $\varkappa>0$ independent of $\lambda.$ Now, by using above inequality, we estimate the last term in the right hand-side of \eqref{est:sing:loc-bdd}: for any $x\in \Omega\setminus \Omega_{\frac\eta2}$, we have
 \begin{equation*}
     \left|\frac{K_{\lambda, \delta}(x)}{(u_\lambda+\lambda)^\gamma}\right| \leq \frac{\mathcal{D}_4}{\left(d(x)+ \lambda^{\frac{1}{\alpha^\star_0}} \right)^{\delta} (\varkappa d^s(x))^\gamma} \leq c {\eta}^{-(\delta + \gamma s)} \leq c.
 \end{equation*}
Finally, we deduce that for any $\eta>0$, there exists $\varkappa_\eta>0$  independent of $\lambda$ such that
\begin{equation}\label{kapeta1}
\|u_{\lambda}\|_{L^\infty(\Omega \setminus \Omega_{\eta})}\leq \varkappa_\eta.
\end{equation}
 For $\alpha= \frac{sp-\delta}{p+\gamma-1}=\alpha^\star$ and $0< \eta <\eta^*$, define
 $$\underline u^{\lfloor \lambda \rfloor}= c_\eta\underline w_\rho \ \text{and } \overline u^{\lfloor \lambda \rfloor}= \dot{c}_\eta \overline w_\rho \ \text{such that} \  0< c_\eta \leq  \left(\frac{\eta}{2}\right)^{s-\alpha} \varkappa \ \text{and} \ \dot{c}_\eta \geq \left(\frac{2}{\eta}\right)^{\alpha} \varkappa_{\frac{\eta}{2}}$$
 where $\underline{w}_{\rho}$, $\overline w_\rho$,  $\varkappa$, $\varkappa_{\frac{\eta}{2}}$ and $\eta^*$ are defined in \eqref{sub}, \eqref{super}, \eqref{eigen:bdd}, \eqref{kapeta1} and Theorem \ref{thq} respectively.
We note that $c_\eta$, $\dot{c}_\eta$ are independent of $\lambda$ and for any $\lambda >0$, $\underline u^{\lfloor \lambda \rfloor}$ and $\overline u^{\lfloor \lambda \rfloor}$ satisfy
 \begin{equation}\label{cons0}
\underline u^{\lfloor \lambda \rfloor}(x)  \leq u_{\lambda}(x) \leq\overline u^{\lfloor \lambda \rfloor}(x) \ \text{for}\ x \in \Omega \setminus \Omega_{\frac{\eta}{2}}\  \text{and}\ \underline u^{\lfloor \lambda \rfloor}(x) \leq u_{\lambda}(x) \leq\overline u^{\lfloor \lambda \rfloor}(x)\ \text{for} \ x \in \Omega^c.
\end{equation}
Using the definition of $\underline{w}_\rho$ and $\overline{w}_\rho$ in \eqref{sub} and \eqref{super} respectively and estimates in \eqref{mainest}, we obtain $$(\underline u^{\lfloor \lambda \rfloor} + \lambda)=c_\eta (d(x)+ \lambda^{1/\alpha})^{\alpha} + \lambda \left(1-c_\eta \right) \ \text{and} \ (\overline u^{\lfloor \lambda \rfloor} + \lambda)= \dot{c}_\eta (d(x)+ \lambda^{\frac{1}{\alpha}})^{\alpha} + \lambda  \ \text{in} \ \Omega$$ and 
\begin{equation}\label{eq:sub}
    (-\Delta)^s_p \underline u^{\lfloor \lambda \rfloor} \leq \frac{ c_\eta^{p-1} C_6}{(d(x)+ \lambda^{\frac{1}{\alpha}})^{\delta+\alpha \gamma}} \leq c_\eta^{p-1} \frac{C_6 K_{\lambda,\delta}(x)}{\mathcal{C}_3(d(x)+ \lambda^{\frac{1}{\alpha}})^{\alpha \gamma}}\quad \text{weakly in}\ \Omega_\eta,
\end{equation}
\begin{equation}\label{eq:super}
 (-\Delta)^s_p\overline u^{\lfloor \lambda \rfloor} \geq \frac{ C_5 \dot{c}_\eta ^{p-1}}{(d(x)+ \lambda^{\frac{1}{\alpha}})^{\delta+\alpha \gamma}} \geq \dot{c}_\eta^{p-1} \frac{ C_5 K_{\lambda,\delta}(x)}{\mathcal{C}_4 (d(x)+ \lambda^{\frac{1}{\alpha}})^{\alpha \gamma}} \quad \text{weakly in}\ \Omega_\eta  
\end{equation}
where $C_5, C_6$ and $\mathcal{C}_3$, $\mathcal{C}_4$ are defined in \eqref{mainest} and \eqref{qasd} respectively. Since $\dot{c}_\eta \to \infty$ as $\eta \to 0$ and $(\overline u^{\lfloor \lambda \rfloor} + \lambda) \geq \dot{c}_\eta  (d(x)+ \lambda^{\frac{1}{\alpha}})^{\alpha}$, we can choose $\eta$ small enough (independent of $\lambda$)  such that $ C_5 \dot{c}_\eta^{\gamma+ p-1} \geq \mathcal{C}_4$ and \eqref{eq:super} reduced to
$$(-\Delta)^s_p\overline u^{\lfloor \lambda \rfloor} \geq  \frac{K_{\lambda,\delta}(x)}{(\overline u^{\lfloor \lambda \rfloor} + \lambda)^\gamma}\ \text{weakly in }  \Omega_{\eta}.$$
Now to prove similar estimate for $\underline u^{\lfloor \lambda \rfloor}$, we divide the proof into two cases; for $x\in \Omega_\eta$:\\
\textbf{Case (i)}: $c_\eta (d(x)+ \lambda^{1/\alpha})^{\alpha} \geq  \lambda \left(1-c_\eta \right)$\\
In this case, we have $(\underline u^{\lfloor \lambda \rfloor} + \lambda)^{-\gamma} \geq (2 c_\eta)^{-\gamma}  (d(x)+ \lambda^{1/\alpha})^{-\alpha\gamma}$ and by choosing $\eta$ small enough such that $2^\gamma c_\eta^{\gamma+p-1} \leq \frac{ \mathcal{C}_3}{C_6}$, \eqref{eq:sub} reduced to 
 $$(-\Delta)^s_p \underline u^{\lfloor \lambda \rfloor} 
 \leq \frac{2^\gamma c_\eta^{\gamma+p-1} C_6}{\mathcal{C}_3} \frac{K_{\lambda,\delta}(x)}{(\underline u^{\lfloor \lambda \rfloor} + \lambda)^\gamma} \leq  \frac{K_{\lambda,\delta}(x)}{(\underline u^{\lfloor \lambda \rfloor} + \lambda)^\gamma}.$$
\textbf{Case (ii)}: $c_\eta  (d(x)+ \lambda^{1/\alpha})^{\alpha} \leq  \lambda \left(1-c_\eta \right)$\\
In this case, we have  $(\underline u^{\lfloor \lambda \rfloor} + \lambda)^{-\gamma} \geq (2\lambda)^{-\gamma} \left(1-c_\eta \right)^{-\gamma}$ and by choosing $\eta$ small enough such that $c_\eta \leq 1$ and $C_6 c_\eta^{p-1} \leq \mathcal{C}_3 {(2 \lambda^*)^{-\gamma}}(1-c_\eta )^{-\gamma}$, \eqref{eq:sub} reduced to,
{\[
(-\Delta)^s_p \underline u^{\lfloor \lambda \rfloor}
\leq \frac{c_\eta^{p-1} C_6 }{\mathcal{C}_3} \frac{K_{\lambda,\delta}(x)}{(\underline u^{\lfloor \lambda \rfloor}+ \lambda)^\gamma} (2\lambda)^\gamma \left(1-c_\eta \right)^{\gamma} \leq \frac{K_{\lambda,\delta}(x)}{(\underline u^{\lfloor \lambda \rfloor} + \lambda)^\gamma}. \]}
Therefore, in each case, we can choose $\eta$ small enough (independent of $\lambda$) such that
$$(-\Delta)^s_p \underline u^{\lfloor \lambda \rfloor} \leq  \frac{K_{\lambda,\delta}(x)}{(\underline u^{\lfloor \lambda \rfloor} + \lambda)^\gamma}\ \text{weakly in }  \Omega_\eta.$$
Since $\underline u^{\lfloor \lambda \rfloor},\overline u^{\lfloor \lambda \rfloor} \in \overline W^{s,p}(\Omega_\eta)$ and $u_\lambda \in L^\infty(\Omega) \cap W^{s,p}_0(\Omega) \subset \overline W^{s,p}(\Omega_\eta)$, Proposition 2.10 in \cite{IMS}  in $\Omega_{\eta}$ implies $\underline u^{\lfloor \lambda \rfloor} \leq u_{\lambda}\leq\overline u^{\lfloor \lambda \rfloor}$ in $\Omega_\eta$. Hence, from \eqref{cons0} and passing $\lambda \to 0$,
\begin{equation}\label{est:boundbeh2}
C_1 d^{\alpha^\star} \leq u  \leq C_2 d^{\alpha^\star}\ \text{in} \ \Omega.
\end{equation}
where $C_1= c_\eta$ and $C_2= \dot{c}_\eta.$
 \\
\noindent \textbf{Interior and boundary regularity:} First we claim the following:  \\
\textbf{Claim:} For all $x_0 \in \Omega \ \text{and} \ R_0=\frac{d(x_0)}{2}$ there exists universally $C_\Omega >0$, $0< \omega_1 <s$ and $0< \omega_2 \leq \alpha^\star$  such that
\begin{equation}\label{est:reg1}
  \text{if}\ 1 < p < 2,\ \left\{
  \begin{array}{l l}
  \|u\|_{C^{\omega_1}(B_{R_0}(x_0))} \leq C_{\Omega} \ &\text{for} \ \delta\leq s(1-\gamma), \\
  \ &\\
  \|u\|_{C^{\omega_2}(B_{R_0}(x_0))} \leq C_{\Omega} \ &\text{for} \ \delta> s(1-\gamma)  
  \end{array}
  \right.
\end{equation}
and
\begin{equation}\label{est:reg2}
\text{if}\ 2 \leq p < \infty,\ \left\{
 \begin{array}{l l}
\|u\|_{C^{s-\epsilon}(B_{R_0}(x_0))} \leq C_{\Omega} \ &\text{for} \ \delta\leq s(1-\gamma), \\
& \\
\|u\|_{C^{\alpha^\star}(B_{R_0}(x_0))} \leq C_{\Omega} \ &\text{for} \ \delta> s(1-\gamma)  .
\end{array}
  \right.
\end{equation}
Let $x_0 \in \Omega$, $R_0= \frac{d(x_0)}{2}$ such that $B_{R_0}(x_0) \subset B_{2R_0}(x_0) \subset \Omega$ and $u \in W^{s,p}(B_{2R_0}(x_0)) \cap L^\infty(B_{2R_0}(x_0))$ be the minimal weak solution of $(P)$, then it satisfies  
$$(-\Delta)^s_p u = \frac{K_\delta(x)}{u^\gamma} \leq \frac{\mathcal{C}_2}{C_1^\gamma} \frac{1}{d^{\gamma s+\delta}} \ \leq \frac{\mathcal{C}_2}{C_1^\gamma} \frac{1}{R_0^{\gamma s+\delta}} \ \text{in} \ B_{R_0}(x_0)\ \text{for} \ \delta\leq s(1-\gamma)$$ and
$$(-\Delta)^s_p u = \frac{K_\delta(x)}{u^\gamma} \leq \frac{\mathcal{C}_2}{C_1^\gamma} \frac{1}{d^{\gamma\alpha^\star+\delta}}\  \leq \frac{\mathcal{C}_2}{C_1^\gamma} \frac{1}{R_0^{\gamma\alpha^\star+\delta}}\ \ \text{in} \ B_{R_0}(x_0)\  \text{for} \ \delta> s(1-\gamma)$$
where $\mathcal{C}_2$ is defined in \eqref{eq:sing-weight}.
{\noindent Then, by using Proposition \ref{intreg} for $p\in (1,2)$, \eqref{est:boundbeh1} and \eqref{est:boundbeh2} we obtain: there exist $\omega_1 \in(0,s)$ and $\omega_2\in(0, \alpha^\star]$} such that \\
if $\delta\leq s(1-\gamma):$
\begin{equation*}
\begin{split}
[u]_{C^{\omega_1}({B_{R_0}(x_0)}) }
\leq &CR_0^{-\omega_1} \left( R_0^{\frac{(sp-\delta-\gamma s)}{p-1}} + \|u\|_{L^\infty(B_{2R_0}(x_0))} + \left((2R_0)^{sp} \int_{(B_{2R_0}(x_0))^c} \frac{|u(y)|^{p-1}}{|x_0-y|^{N+sp}} ~dy\right)^\frac{1}{p-1} \right) \\
\leq & {\bf C_1}  
\end{split}
\end{equation*}
and if $\delta> s(1-\gamma)$:
\begin{equation*}
\begin{split}
[u]_{C^{\omega_2}({B_{R_0}(x_0)})} 
&\leq C R_0^{-\omega_2}\left( R_0^{\alpha^\star} + \|u\|_{L^\infty(B_{2R_0}(x_0))} + \left((2R_0)^{sp} \int_{(B_{2R_0}(x_0))^c} \frac{|u(y)|^{p-1}}{|x_0-y|^{N+sp}} ~dy\right)^\frac{1}{p-1} \right) \\
& \leq  {\bf C_2}. 
\end{split}
\end{equation*}
Furthermore, using Proposition \ref{pro:large-p} for $p\in[2,+\infty)$, we get for any $\epsilon>0$
\begin{equation*}
[u]_{C^{s-\epsilon}({B_{R_0/64}(x_0)})} \leq {\bf C_3} \ \ \text{if}\  \delta\leq s(1-\gamma)\quad \text{and}\quad  [u]_{C^{\frac{sp-\delta}{p+\gamma-1}}(B_{R_0/64}(x_0))} \leq {\bf C_4} \ \ \text{if}\ \delta> s(1-\gamma).
\end{equation*} 
The constants ${\bf C_i}$ are independent of the choice of point $x_0$ (and $R_0$) and since $u\in L^\infty(\Omega)$ we deduce \eqref{est:reg1} and \eqref{est:reg2} and by a covering argument for any $\Omega' \Subset \Omega$, we conclude
\begin{equation}\label{est:reg3}
  \text{if}\ 1 < p < 2,\ \left\{
  \begin{array}{l l}
  \|u\|_{C^{\omega_1}(\Omega')} \leq C_{\Omega'} \ &\text{for} \ \delta\leq s(1-\gamma), \\
  \ &\\
  \|u\|_{C^{\omega_2}(\Omega')} \leq C_{\Omega'} \ &\text{for} \ \delta> s(1-\gamma)  
  \end{array}
  \right.
  \end{equation}
and
\begin{equation}\label{est:reg4}
\text{if}\ 2 \leq p < \infty,\ \left\{
 \begin{array}{l l}
\|u\|_{C^{s-\epsilon}(\Omega')} \leq C_{\Omega'} \ &\text{for} \ \delta\leq s(1-\gamma), \\
& \\
\|u\|_{C^{\alpha^\star}(\Omega')} \leq C_{\Omega'} \ &\text{for} \ \delta> s(1-\gamma)  .
\end{array}
  \right.
\end{equation}
Now, to prove the regularity estimate in $\Omega$ (and then the whole $\mathbb{R}^N$) since $u=0$ in $\mathbb{R}^N \setminus \Omega$, it is {sufficient from interior regularity that follows from \eqref{est:reg3}, \eqref{est:reg4}}, to prove  \eqref{est:reg3} and \eqref{est:reg4} on {
$\Omega_\eta$} where $\eta>0$ small enough.  \\
In this regard, let $x,y \in \Omega_\eta$ {and suppose without loss of generality $d(x) \geq d(y)$. Now two cases occur:  \\
\textbf{(I)} either $|x-y| \leq \frac{d(x)}{64}$, in which case set $64R_0= d(x)$ and $y\in B_{R_0}(x)$. Hence we apply \eqref{est:reg1} or \eqref{est:reg2} in $B_{R_0}(x)$ and  we obtain the regularity.\\
\textbf{(II)} or $|x-y| \geq \frac{d(x)}{64}\geq  \frac{d(y)}{64}$ in which case \eqref{est:boundbeh1} and \eqref{est:boundbeh2} ensures for a constant $C>0$ }large enough, we get
\begin{equation}\label{est:bdry1}
\begin{split}
\frac{|u(x)-u(y)|}{|x-y|^{s-\epsilon}} &\leq \frac{|u(x)|}{|x-y|^{s-\epsilon}} + \frac{|u(y)|}{|x-y|^{s-\epsilon}} \leq C_1 \left( \frac{u(x)}{d^{s-\epsilon}(x)} + \frac{u(y)}{d^{s-\epsilon}(y)}\right) \leq C,
\end{split}
\end{equation}
and
\begin{equation}\label{est:bdry2}
\frac{|u(x)-u(y)|}{|x-y|^{\alpha^\star}} \leq \frac{|u(x)|}{|x-y|^{\alpha^\star}} + \frac{|u(y)|}{|x-y|^{\alpha^\star}} 
\leq C_2 \left( \frac{u(x)}{d^{\alpha^\star}(x)} + \frac{u(y)}{d^{\alpha^\star}(y)}\right) \leq C.
\end{equation}
Then, finally by combining \eqref{est:reg3}-\eqref{est:bdry2}, we get our claim and the proof is complete.
\qed \\
\textbf{Proof of Corollary \ref{cor:bdry+sobreg}}:  \\
For ${\delta > s(1-\gamma)}$, let $u_\epsilon$ be the weak solution of the problem $(P_\epsilon^\gamma).$ Then, using the boundary behavior of the approximating sequence $u_\epsilon$ and taking $\phi= u_\epsilon$ in \eqref{oiu}, we obtain 
$$\|u_\epsilon\|_{s,p}^p = \int_{\Omega} K_{\epsilon,\delta}(x) u_\epsilon^{1-\gamma} ~dx \leq C_1 \int_{\Omega} d^{(1-\gamma)\alpha^\star -\delta}(x) ~dx \leq C$$
{if $ (1-\gamma)(sp-\delta) >(\delta-1)(p+\gamma -1) \Leftrightarrow sp(\gamma-1) +\delta p < (p+\gamma-1) \Leftrightarrow \Lambda<1.$} \\
Similarly, by taking $\phi= u^\theta_\epsilon$ in \eqref{oiu} and {using Proposition \ref{prelim2}}, we obtain for $\theta> \Lambda>1$
\begin{equation*}
        \|u^\theta_\epsilon\|_{s,p}^p \leq  \theta^{p-1} \int_{\Omega} K_{\epsilon,\delta}(x) u_\epsilon^{(\theta-1)(p-1)+ \theta-\gamma} ~dx \leq C_2 \int_{\Omega} d^{(\theta p-(p-1+\gamma))\alpha^\star -\delta}(x) ~dx\leq C.
\end{equation*}
Now, by passing limits $\epsilon \to 0$ in \eqref{oiu}, we get the minimal solution $u \in W_0^{s,p}(\Omega)$ if $\Lambda <1$ and $u^\theta\in W_0^{s,p}(\Omega)$ if $ \theta> \Lambda>1$.\\
The only if statement follows from the Hardy inequality and the boundary behavior of the weak solution. Precisely, if $ \Lambda \geq 1$, then $u \notin W_0^{s,p}(\Omega)$. Indeed, we have
$$ \|u\|_{s,p}^p\geq C\int_{\Omega} \left|\frac{u(x)}{d^{s}(x)}\right|^p ~dx \geq C\int_{\Omega} d^{p(\alpha^\star-s)}(x) ~dx =\infty.$$
In the same way, if $ \theta \in[1,\Lambda]$, then 
$$  \|u^\theta\|_{s,p}^p\geq C\int_{\Omega} \left|\frac{u^{\theta}(x)}{d^{s}(x)}\right|^p ~dx\geq C\int_{\Omega} d^{p(\theta\alpha^\star-s)}(x) ~dx =\infty$$
and we deduce $u^\theta \notin W_0^{s,p}(\Omega)$ .
\qed
{\begin{remark}
In case of local operator, {\it i.e.} p-Laplacian operator, the optimal condition of Sobolev regularity in Theorem $1.4$, \cite{deep} coincide with the our condition for $s=1$.
\end{remark}}
\section{Nonexistence result}\label{non-es}
\textbf{Proof of Theorem \ref{thm:non-exis}:} {$\delta \geq sp$}.  We proceed by contradiction assuming there exist a weak solution $u_0 \in W_{loc}^{s,p}(\Omega)$ of the problem $(P)$ and $\kappa_0 \geq 1$ such that $u_0^{\kappa_0} \in W_0^{s,p}(\Omega)$.\\
We choose $\Gamma \in (0,1)$ and  $\delta_0 <sp$ such that $\Gamma K_{\delta_0}(x) \leq K_\delta(x)$ and the constant $\Gamma$ is independent of $\delta_0$ {for $\delta_0 \geq \delta_0^*$ with $\delta_0^*>0$.}\\
For $\epsilon>0$, let $u_\epsilon \in W_0^{s,p}(\Omega) \cap C^{0,\ell}(\overline{\Omega})$ be the unique weak solution of
\begin{equation}\label{bv}
\iint_{\mathbb{R}^{2N}} \frac{[u_\epsilon(x)- u_\epsilon(y)]^{p-1} (\phi(x)- \phi(y))}{|x-y|^{N+sp}} ~dx ~dy = \int_{\Omega} \frac{\Gamma K_{\epsilon, \delta_0}(x)}{(u_\epsilon+ \epsilon)^\gamma} \phi ~dx 
\end{equation}
for any $\phi \in W_0^{s,p}(\Omega)$.\\
By the continuity of $u_\epsilon$, for given $\theta >0$, there exists a $\eta {= \eta(\epsilon,\theta)} >0$ such that $u_\epsilon \leq  \frac{\theta}{2}$ in $\Omega_\eta$. Since $u_0 \geq 0$, then $ w:= u_\epsilon- u_0- \theta \leq - \frac{\theta}{2} <0$ in $\Omega_\eta$ and 
\begin{equation*}\label{supp}
\mbox{supp}({w}^+) \subset \mbox{supp}((u_\epsilon- \theta)^+) \subset \Omega \setminus \Omega_\eta.
\end{equation*}
{We have $w^+\in W_0^{s,p}(\tilde \Omega)\subset W_0^{s,p}(\Omega)$ for some $\tilde \Omega$ such that $\Omega\setminus \Omega_\eta \subset \tilde \Omega \Subset \Omega$. Hence, choosing $w^+$ as a test function in \eqref{bv}, we get
\begin{equation}\label{bv4}
\iint_{\mathbb{R}^{2N}} \frac{[u_\epsilon(x)- u_\epsilon(y)]^{p-1} (w^+(x)- w^+(y))}{|x-y|^{N+sp}} ~dx ~dy = \int_{\Omega} \frac{\Gamma K_{\epsilon, \delta_0}(x)}{(u_\epsilon+ \epsilon)^\gamma} w^+ ~dx \leq \int_{\Omega} \frac{\Gamma K_{\epsilon, \delta_0}(x)}{u_\epsilon^\gamma} w^+ ~dx. 
\end{equation} 
Moreover, $u_0$ is a weak solution of $(P)$ and taking $w^+\in W_0^{s,p}(\tilde \Omega)$ as test function in Definition \ref{def1} with $u_0$, we have
\begin{equation}\label{bv2}
\begin{split}
\iint_{\mathbb{R}^{2N}} \frac{[u_0(x)- u_0(y)]^{p-1} (w^+(x)- w^+(y))}{|x-y|^{N+sp}} ~dx ~dy = \int_{\Omega} \frac{K_{\delta}(x)}{u_0^\gamma} w^+ ~dx \geq \int_{\Omega} \frac{ \Gamma K_{\epsilon, \delta_0}(x)}{u_0^\gamma} w^+ ~dx. 
\end{split}
\end{equation}}
By subtracting \eqref{bv2} and \eqref{bv4}, we get
\begin{equation}\label{bv5}
\begin{split}
\iint_{\mathbb{R}^{2N}} &\frac{([u_\epsilon(x)- u_\epsilon(y)]^{p-1}-[u_0(x)- u_0(y)]^{p-1}) (w^+(x)- w^+(y))}{|x-y|^{N+sp}} ~dx ~dy\\
& \leq \int_{\Omega} \left(\frac{\Gamma K_{\epsilon,\delta_0}(x)}{u_\epsilon^\gamma} - \frac{\Gamma K_{\epsilon, \delta_0}(x)}{u_0^\gamma} \right) w^+ ~dx  \leq 0.
\end{split}
\end{equation}
{
Applying the following identity 
$$[b]^{p-1} - [a]^{p-1} = (p-1)(b-a) \int_0^1 |a+t(b-a)|^{p-2} ~dt $$
with $a=u_0(x)- u_0(y)$ and $b=u_\epsilon(x)- u_\epsilon(y)$, we get  
\begin{equation}\label{bv6}
        [u_\epsilon(x)- u_\epsilon(y)]^{p-1}-[u_0(x)- u_0(y)]^{p-1} 
         = (p-1) Q(x,y) (w(x)-w(y))
\end{equation}
where $$Q(x,y)= \int_0^1 |u_0(x)- u_0(y)+t(w(x)-w(y))|^{p-2} ~dt \geq 0.$$
Now by multiplying \eqref{bv6} with $(w^+(x)-w^+(y))$, we obtain
\begin{equation*}
     ([u_\epsilon(x)- u_\epsilon(y)]^{p-1}-[u_0(x)- u_0(y)]^{p-1}) (w^+(x)- w^+(y))=(p-1) Q(x,y)(w(x)-w(y)) (w^+(x)-w^+(y))\geq 0
\end{equation*}
since the mapping $x\to x^+$ is nondecreasing.\\
From \eqref{bv5}, we get $w^+= (u_\epsilon- u_0 -\theta)^+=0$ a.e. in $\Omega$}. Since  $\theta$ is arbitrary, we deduce $u_\epsilon \leq u_0$ in $\Omega.$  Using the estimates in {\bf Case 2} of the proof of Theorem \ref{thm:regularity}, we have
$$\eta c_\eta (d(x)+ \epsilon^{\frac{\gamma+p -1}{sp-\delta_0}})^{\frac{sp-\delta_0}{\gamma+p-1}} - \epsilon \leq u_\epsilon \leq u_0 \ \text{in} \ \Omega.$$ 
Now, by using Hardy inequality and $u_0^{\kappa_0} \in W_0^{s,p}(\Omega)$, we obtain
$$\displaystyle
(\eta c_\eta)^{\kappa_0 p}\int_{\Omega} \left|\frac{\left((d(x)+ \epsilon^{\frac{\gamma+p -1}{sp-\delta_0}})^{\frac{sp-\delta_0}{\gamma+p-1}} - \epsilon \right)^{\kappa_0 }}{d^{s}(x)}\right|^p ~dx \leq \int_{\Omega} \left|\frac{u_0^{\kappa_0 }}{d^{s}(x)}\right|^p ~dx < \infty.
$$
Now, by choosing $\delta_0$ close enough to $sp$ and by taking $\epsilon \to 0$, we obtain that the left hand side is not finite, which is a contradiction and hence claim. 
\appendix
\section{Appendix}\label{ann}
In this section, we recall the local regularity results for the p-fractional Laplacian operator. We set for $R>0$ and $y \in \mathbb{R}^N$
$$Q(u;y,R)= \|u\|_{L^\infty(B_R(y))} + 
\left(R^{sp} \int_{(B_R(y))^c} \frac{|u(x)|^{p-1}}{|x-y|^{N+sp}} ~dx\right)^\frac{1}{p-1}$$

\begin{pro}(Corollary 5.5, \cite{IMS})\label{intreg}
If $u \in \overline{W}^{s,p}(B_{2R_0}(y)) \cap L^\infty(B_{2R_0}(y))$ satisfies $|(-\Delta)^s_p u| \leq K$ weakly in $B_{2R_0}(y)$ for some $R_0>0$, then there exists universal constants $\omega\in (0,1)$ and $C>0$ with the following property:{
$$[u]_{C^{\omega}(B_{R_0}(x_0))}:=\sup_{x,y\in B_{R_0}(x_0)}\frac{|u(x)-u(y)|}{|x-y|^{\omega}} \leq C [(K R_0^{sp})^{\frac{1}{p-1}} + Q(u;x_0,2R_0)] R_0^{-\omega}.$$}
\end{pro}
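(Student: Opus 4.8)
This is Corollary~5.5 of \cite{IMS}, so strictly the proof is a reference; for completeness I sketch the argument, which follows the De Giorgi--Nash--Moser scheme for nonlocal nonlinear equations developed in \cite{Castro} and quantified in \cite{IMS}. The hypothesis ``$|(-\Delta)^s_p u| \leq K$ weakly in $B_{2R_0}(x_0)$'' means that $u$ is simultaneously a weak subsolution of $(-\Delta)^s_p u = K$ and a weak supersolution of $(-\Delta)^s_p u = -K$ there, so every one-sided estimate below is applied to both signs.

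\emph{Step 1: normalization.} After translating the center $x_0$ to the origin, rescaling by $x \mapsto R_0 x$ (using the scaling law $(-\Delta)^s_p(u(R_0\,\cdot)) = R_0^{sp}\big((-\Delta)^s_p u\big)(R_0\,\cdot)$ and the definition of $\overline{W}^{s,p}$), and dividing $u$ by $M := (K R_0^{sp})^{1/(p-1)} + Q(u;x_0,2R_0)$, the claim reduces to the following: if $v \in \overline{W}^{s,p}(B_2) \cap L^\infty(B_2)$ satisfies $\|v\|_{L^\infty(B_2)} \leq 1$, $\Tail(v;0,2) \leq 1$ and $|(-\Delta)^s_p v| \leq 1$ weakly in $B_2$, then $[v]_{C^{\omega}(B_1)} \leq C$ for universal $\omega \in (0,1)$ and $C>0$ depending only on $N,s,p$.

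\emph{Step 2: oscillation decay.} The core is to produce a universal $\lambda \in (0,1)$ (then set $\omega := -\log_2 \lambda$) with $\osc_{B_{2^{-j}}} v \leq 2\lambda^{j}$ for all $j \geq 0$, which immediately yields the Hölder seminorm bound. One argues by induction on $j$. The inductive step combines: (a) a Caccioppoli inequality on the level sets $(v-k)_\pm$ for sub/supersolutions; (b) a logarithmic energy estimate; and (c) an expansion-of-positivity / measure-shrinking lemma: if an appropriately centered version of $v$ is nonnegative on $B_{2^{-j}}$, its tail coming from $\mathbb{R}^N \setminus B_{2^{-j}}$ is under control, and $v$ lies above a level $\ell$ on at least half of $B_{2^{-j}}$ in measure, then $v \geq c\,\ell$ on $B_{2^{-j-1}}$ for a universal $c>0$. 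Applying (c) to whichever of $v - \inf_{B_{2^{-j}}} v$ or $\sup_{B_{2^{-j}}} v - v$ occupies at least half of $B_{2^{-j}}$ gives $\osc_{B_{2^{-j-1}}} v \leq \lambda\,\osc_{B_{2^{-j}}} v$, closing the induction.

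\emph{Main obstacle.} The delicate point is the nonlocal tail bookkeeping inside the iteration. When applying (c) at scale $2^{-j}$ to the centered function $w_j := v - c_j$, one needs $\Tail(w_j^-;0,2^{-j})$ to be small; splitting $\mathbb{R}^N \setminus B_{2^{-j}}$ into the annuli $B_{2^{-i}} \setminus B_{2^{-i-1}}$ for $0 \le i < j$ together with $\mathbb{R}^N \setminus B_2$, on each annulus $|w_j^-| \lesssim \osc_{B_{2^{-i}}} v$ plus telescoping constant shifts, by the oscillation bounds already available at the coarser scales, while on $\mathbb{R}^N \setminus B_2$ one uses $\Tail(v;0,2) \le 1$ and $\|v\|_{L^\infty(B_2)} \le 1$. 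This produces a geometric-type series, roughly $2^{-sp\,j}\sum_{i=0}^{j} 2^{sp\, i}\,(\osc_{B_{2^{-i}}} v)^{p-1}$, and for it to be controlled by $(\osc_{B_{2^{-j}}} v)^{p-1}$ one must take $\lambda$ (hence $\omega$) sufficiently small relative to $s$ and $p$ --- this is why $\omega$ is universal but not explicit. Verifying that every De Giorgi iteration constant, the constant $c$ in (c), and the admissible range of $\lambda$ depend only on $N,s,p$, and in particular not on $u$, $K$ or $R_0$, is the heart of the proof; granting this, undoing the normalization of Step~1 restores exactly the asserted estimate $[u]_{C^{\omega}(B_{R_0}(x_0))} \leq C\big[(K R_0^{sp})^{1/(p-1)} + Q(u;x_0,2R_0)\big] R_0^{-\omega}$.
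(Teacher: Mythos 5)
The paper does not prove this proposition; it simply cites Corollary 5.5 of \cite{IMS}, so your role here is to reconstruct what \cite{IMS} does. Your sketch --- normalize by $M=(KR_0^{sp})^{1/(p-1)}+Q(u;x_0,2R_0)$ using the scaling law $(-\Delta)^s_p(u(R_0\cdot))=R_0^{sp}((-\Delta)^s_pu)(R_0\cdot)$, then run a De~Giorgi oscillation-decay iteration $\osc_{B_{2^{-j-1}}}v\leq\lambda\,\osc_{B_{2^{-j}}}v$ fed by Caccioppoli, logarithmic, and weak-Harnack/expansion-of-positivity lemmas with nonlocal tail tracking --- is indeed the scheme of \cite{Castro, IMS}, and correctly singles out the tail bookkeeping across dyadic annuli as the genuinely nonlocal obstacle.

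One direction is reversed in your ``main obstacle'' paragraph. Writing $\osc_{B_{2^{-i}}}v\lesssim\lambda^i$, the tail term behaves like $2^{-spj}\sum_{i\le j}(2^{sp}\lambda^{p-1})^i$, and this is comparable to $(\osc_{B_{2^{-j}}}v)^{p-1}\sim\lambda^{(p-1)j}$ precisely when the ratio $2^{sp}\lambda^{p-1}\geq1$, i.e.\ when $\lambda\geq2^{-sp/(p-1)}$, equivalently when $\omega=-\log_2\lambda\leq\frac{sp}{p-1}$. So the constraint is that $\lambda$ must not be too small (it must stay close to $1$), which is the same as $\omega$ being small; your phrase ``take $\lambda$ (hence $\omega$) sufficiently small'' conflates the two, since $\omega$ is a decreasing function of $\lambda$. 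In practice this tail constraint is not binding because the De~Giorgi step itself only yields $\lambda=1-\mu$ with $\mu$ small (so $\omega$ is automatically small); it does, however, explain the natural ceiling $\omega<\min\{sp/(p-1),1\}$ appearing in the quantitative statements. Apart from this slip, the proposal is a faithful account of the argument the paper is invoking by reference.
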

\begin{pro}(Theorem 1.4, \cite{BLS})\label{pro:large-p}
Let $p\in [2,\infty)$ and $u \in W^{s,p}_{loc}(\Omega) \cap L^\infty_{loc}(\Omega) \cap L^{p-1}(\mathbb{R}^N)$ be a local weak solution of $(-\Delta)^s_p u = f$ in $\Omega$ with $f \in L^\infty_{loc}(\Omega).$ Then $u \in C^{\omega}_{loc}(\Omega)$ for every $0< \omega < \min\{\frac{sp}{p-1},1\}.$ More precisely, for every $0< \omega < \min\{\frac{sp}{p-1},1\}$ and every ball $B_{4R}(x_0) \Subset \Omega$, there exists a constant $C= C(N,s,p,\omega)$ such that
$$[u]_{C^{\omega}(B_{\frac{R}{8}}(x_0))} \leq C [(\|f\|_{L^\infty(B_R(x_0))} R^{sp})^{\frac{1}{p-1}} + Q(u;x_0,R)] R^{-\omega}.$$
\end{pro}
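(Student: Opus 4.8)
The statement is precisely Theorem 1.4 of \cite{BLS} (the superquadratic case $p\ge2$), so there is nothing to prove beyond citing that paper; for orientation I sketch the strategy of \cite{BLS}, since it is the mechanism underlying the interior estimates invoked in the proof of Theorem \ref{thm:regularity}. The plan is to upgrade the a priori regularity of a bounded local weak solution in finitely many steps, gaining fractional differentiability until a Morrey-type embedding produces Hölder continuity.

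First I would fix a ball $B_{4R}(x_0)\Subset\Omega$ and normalise, pulling the quantities $\|f\|_{L^\infty(B_R(x_0))}$ and the tail $Q(u;x_0,R)$ out of the estimate by means of the scaling $u(x)\mapsto |x|^{sp/(p-1)}$ dictated by the equation $(-\Delta)^s_p u=f$ with bounded $f$; this is where both the ceiling exponent $\min\{sp/(p-1),1\}$ and the global tail term $Q(u;x_0,R)$ enter. Next, using $p\ge2$, I would test the equation with suitable second-order incremental quotients of a localised version of $u$ and exploit the algebraic inequalities $\big|[a]^{p-1}-[b]^{p-1}\big|\le C_p(|a|+|b|)^{p-2}|a-b|$ and its convexity counterpart, in order to derive a Caccioppoli-type inequality for the incremental quotient $\tau_h u/|h|^{\vartheta}$ in $L^p_{loc}$; combined with the fractional Sobolev inequality this yields a single ``gain of differentiability'' step, passing from $u\in W^{\vartheta,q}_{loc}$ to a slightly larger differentiability exponent at the price of a controlled loss in $q$.

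The heart of the argument is then the self-improving iteration: the differentiability exponent is driven up towards $s$, and, after further iteration at the $L^\infty$ endpoint, the Hölder exponent up towards $\min\{sp/(p-1),1\}$, with the exponents at each step tuned so that the scheme converges and the constants stay under control, depending only on $N,s,p$ and the target exponent $\omega$. Once $u$ lies in a fractional space $W^{\sigma,q}_{loc}$ with $\sigma q>N$ and $\sigma$ arbitrarily close from below to $\min\{sp/(p-1),1\}$, the Morrey embedding $W^{\sigma,q}\hookrightarrow C^{\sigma-N/q}$ gives $u\in C^{\omega}_{loc}(\Omega)$ for every $\omega<\min\{sp/(p-1),1\}$, together with the quantitative bound stated, by unwinding the normalisation.

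The main obstacle is exactly this bootstrap: the superquadratic hypothesis $p\ge2$ is essential both for the algebraic inequalities controlling $[a]^{p-1}-[b]^{p-1}$ and for closing the iteration, and the delicate point is keeping all constants uniform while letting $\omega\uparrow\min\{sp/(p-1),1\}$. For $1<p<2$ this scheme does not close, which is why in Theorem \ref{thm:regularity} we only record an unquantified exponent $\omega_1,\omega_2$ in that regime, obtained instead from the qualitative estimate of Proposition \ref{intreg}. For the complete details we refer to \cite{BLS}.
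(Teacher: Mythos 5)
The paper does not prove Proposition \ref{pro:large-p}; it is recorded verbatim as Theorem 1.4 of \cite{BLS}, so citing that paper is exactly what the authors do, and your decision to defer to \cite{BLS} matches the paper's treatment. Your accompanying sketch of the \cite{BLS} bootstrap (incremental quotients, Caccioppoli-type gain of differentiability, iteration up to the threshold $\min\{sp/(p-1),1\}$, Morrey embedding, all hinging on $p\ge 2$) is a fair high-level description of that argument and is harmless supplementary context, though strictly speaking none of it is needed here.
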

\noindent Moreover we recall the following result which is suitable  for the acquisition  estimates of Theorem \ref{thpr} and  Theorem \ref{thq}.
\begin{Lem}(Lemma 2.5, \cite{IMS})\label{equi:stro-weak}
Let $u \in \overline{W}^{s,p}_{loc}(\Omega)$. For $\epsilon>0$, let $A_\epsilon \subset \mathbb{R}^N \times \mathbb{R}^N$ be a neighbourhood of $D$, the diagonal of $\mathbb{R}^N \times \mathbb{R}^N$, which satisfies
\begin{enumerate}
    \item[(i)] $(x,y) \in A_\epsilon$ for all $(y,x) \in A_\epsilon$,
    \item[(ii)] $\max\left\{\sup_{x \in A_\epsilon} \dist(x,D), \sup_{y \in D} \dist(y,A_\epsilon)\right\} \to 0$ as $ \epsilon \to 0^+.$
\end{enumerate}
For all $x \in \mathbb{R}^N$, we define $A_\epsilon(x)= \{y \in \mathbb{R}^N: (x,y) \in A_\epsilon\}$ and 
$$f_\epsilon(x) = 2\int_{(A_\epsilon(x))^c} \frac{[u(x)-u(y)]^{p-1}}{|x-y|^{N+sp}} ~dy.$$
Assume that $f_\epsilon \to f$ in $L^1_{loc}(\Omega)$. Then, $u$ satisifies
$$(-\Delta)^s_p\, u =  f \ \ \  \text{E-weakly in}\ \Omega.$$
\end{Lem}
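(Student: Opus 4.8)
\noindent The plan is to verify the E-weak identity of Definition~\ref{notion} first for test functions $\phi\in C_c^\infty(\Omega)$ and then to conclude by density. If $\Omega$ is unbounded it suffices to argue in an arbitrary bounded open set $\Omega'\subset\Omega$, where $u\in\overline W^{s,p}(\Omega')$ and $f_\epsilon\to f$ in $L^1(\Omega')$; so we may assume $\Omega$ bounded. Fix $\phi\in C_c^\infty(\Omega)$ and set $K:=\mathrm{supp}\,\phi$.

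The first step is symmetrization. Writing $A_\epsilon(x)=\{y:(x,y)\in A_\epsilon\}$ and using Fubini (licit once the majorant below is available),
\begin{equation*}
\int_{\mathbb{R}^N}f_\epsilon(x)\phi(x)\,dx=2\iint_{\mathbb{R}^{2N}\setminus A_\epsilon}\frac{[u(x)-u(y)]^{p-1}\,\phi(x)}{|x-y|^{N+sp}}\,dx\,dy .
\end{equation*}
Renaming $x$ and $y$, using $[u(y)-u(x)]^{p-1}=-[u(x)-u(y)]^{p-1}$ and the symmetry assumption (i), under which $(x,y)\in A_\epsilon\iff(y,x)\in A_\epsilon$, the right-hand side also equals $-2\iint_{\mathbb{R}^{2N}\setminus A_\epsilon}[u(x)-u(y)]^{p-1}\phi(y)\,|x-y|^{-N-sp}\,dx\,dy$; averaging,
\begin{equation*}
\int_{\mathbb{R}^N}f_\epsilon\phi\,dx=\iint_{\mathbb{R}^{2N}\setminus A_\epsilon}\frac{[u(x)-u(y)]^{p-1}\big(\phi(x)-\phi(y)\big)}{|x-y|^{N+sp}}\,dx\,dy .
\end{equation*}
By hypothesis (ii) the sets $A_\epsilon$ collapse onto the diagonal, so $\mathbf{1}_{\mathbb{R}^{2N}\setminus A_\epsilon}\to 1$ a.e.; hence, granting dominated convergence, the left side tends to the full Gagliardo pairing of $u$ against $\phi$. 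On the other hand $f_\epsilon\to f$ in $L^1_{loc}(\Omega)$ and $\phi$ is bounded with compact support, so the left side also tends to $\int_\Omega f\phi\,dx$. This gives the E-weak identity for every $\phi\in C_c^\infty(\Omega)$. Since $u\in\overline W^{s,p}(\Omega)$ makes the bilinear form $\phi\mapsto\iint_{\mathbb{R}^{2N}}[u(x)-u(y)]^{p-1}(\phi(x)-\phi(y))|x-y|^{-N-sp}\,dx\,dy$ continuous on $W_0^{s,p}(\Omega)$ (by the splitting below), and $f\in L^{p'}(\Omega)$ pairs continuously with $W_0^{s,p}(\Omega)\hookrightarrow L^p(\Omega)$, a density argument extends the identity to all $\phi\in W_0^{s,p}(\Omega)$, and, restricting to $\phi\ge0$, to the sub- and supersolution versions.

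The genuinely delicate point is to produce an $\epsilon$-independent $L^1(\mathbb{R}^{2N})$ majorant for $[u(x)-u(y)]^{p-1}(\phi(x)-\phi(y))|x-y|^{-N-sp}$; this also justifies Fubini above. I would pick $K\subset K'\Subset\Omega$ with $d_0:=\dist(K,\partial K')>0$ and split $\mathbb{R}^{2N}$ into four regions. On $K'\times K'$ one uses $|\phi(x)-\phi(y)|\le\mathrm{Lip}(\phi)\,|x-y|$ and Hölder's inequality with exponents $p'$ and $p$ to bound the contribution by $[u]_{s,p,K'}^{\,p-1}\big(\iint_{K'\times K'}|x-y|^{-(N+sp-p)}\,dx\,dy\big)^{1/p}$, which is finite exactly because $s<1$. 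On $\{x\in K,\ y\notin K'\}$ and, symmetrically, $\{x\notin K',\ y\in K\}$, the far point annihilates $\phi$, one has $|x-y|\ge d_0$ and $|x-y|^{-N-sp}\lesssim(1+|y|)^{-N-sp}$ uniformly for $x\in K$, and $|u(x)-u(y)|^{p-1}\lesssim|u(x)|^{p-1}+|u(y)|^{p-1}$, so the $u(x)$-part is controlled by $u\in L^{p-1}(K')$ and the $u(y)$-part by the tail bound $\int_{\mathbb{R}^N}|u(y)|^{p-1}(1+|y|)^{-N-sp}\,dy<\infty$ built into the definition of $\overline W^{s,p}$. On $\{x\notin K,\ y\notin K\}$ the integrand vanishes. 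With this majorant, dominated convergence completes the proof, and the same decomposition supplies the continuity of the bilinear form needed in the density step.
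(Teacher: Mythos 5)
The paper does not actually prove this lemma: it quotes it verbatim as Lemma~2.5 of \cite{IMS} and uses it as a black box, so there is no ``paper's own proof'' to compare against. Your argument reconstructs the standard proof of that lemma correctly, and it is in fact essentially the argument one finds in \cite{IMS}: Fubini followed by the symmetrization trick based on hypothesis~(i) to turn $\phi(x)$ into $\tfrac12(\phi(x)-\phi(y))$, then an $\epsilon$-uniform $L^1(\mathbb{R}^{2N})$ majorant to pass to the limit on the right and $L^1_{loc}$ convergence of $f_\epsilon$ on the left, and finally density of $C_c^\infty(\Omega)$ in $W_0^{s,p}(\Omega)$. Your four-region decomposition and the use of the Lipschitz bound on $K'\times K'$, the Gagliardo seminorm, and the tail integral $\int|u(y)|^{p-1}(1+|y|)^{-N-sp}\,dy$ built into $\overline W^{s,p}$ is exactly the right way to manufacture that majorant.

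Two small remarks, neither of which affects the substance. First, the Fubini step at fixed $\epsilon$ (with the unsymmetrized integrand $\phi(x)$) deserves a sentence of its own: since $A_\epsilon$ is an open neighbourhood of $D$ and $K$ is compact, there is an $r=r(\epsilon,K)>0$ with $B_r(x)\subset A_\epsilon(x)$ for all $x\in K$, so the singular diagonal is uniformly excised and the double integral converges absolutely; this is logically prior to the $\epsilon$-uniform majorant, which is what dominated convergence (not Fubini) needs. Second, in the density step the choice $K\subset K'\Subset\Omega$ tied to $\mathrm{supp}\,\phi$ no longer applies for a general $\phi\in W_0^{s,p}(\Omega)$; one should instead take $K'$ with $\Omega\Subset K'$, which is exactly what the definition of $\overline W^{s,p}(\Omega)$ furnishes, and then run the same split $\{K'\times K'\}\cup\{\Omega\times(K')^c\}\cup\{(K')^c\times\Omega\}$ using $[\phi]_{s,p}$ (H\"older) on the first piece and $\|\phi\|_{L^p(\Omega)}$ together with the tail integral on the last two. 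With these tweaks the proof is complete.
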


\end{document}